\newtheorem{thm}{Theorem}
\newtheorem{lemma}{Lemma}
\newtheorem{corol}[thm]{Corollary}
\theoremstyle{definition}
\newtheorem*{ack}{Acknowledgement}
\theoremstyle{remark}
\newtheorem{rem}{Remark}
\DeclareMathOperator{\tr}{tr}
\def\R{\mathbb{R}}
\def\C{\mathbb{C}}
\def\Z{\mathbb{Z}}
\def\N{\mathbb{N}}
\def\T{\mathbb{T}}
\def\P{\mathbb{P}}
\def\D{\mathbb{D}}
\def\G{\mathbb{G}}
\def\H{\mathbb{H}}
\def\cV{\mathcal{V}}
\def\cW{\mathcal{W}}
\def\SL{\mathrm{SL}}
\def\GL{\mathrm{GL}}
\def\SO{{\mathrm{SO}}}
\def\PSL{{\mathrm{PSL}}}
\def\id{{\operatorname{id}}}
\def\Id{{\operatorname{Id}}}  
\DeclareMathOperator{\interior}{int}
\renewcommand{\epsilon}{\varepsilon}
\renewcommand{\setminus}{\smallsetminus}
\renewcommand{\emptyset}{\varnothing}
\def\DC{{\mathrm{DC}}}
\def\Ruth{\mathit{Ruth}} 
\newcommand{\comm}[1]{}
\begin{document}

\title[Cantor Spectrum for Generalized Skew-shifts]
{Cantor Spectrum for Schr\"odinger Operators with Potentials arising
from Generalized Skew-shifts}

\begin{abstract}
We consider continuous $\SL(2,\R)$-cocycles over a strictly
ergodic homeomorphism which fibers over an almost periodic
dynamical system (generalized skew-shifts).  We prove that any
cocycle which is not uniformly hyperbolic can be approximated by
one which is conjugate to an $\SO(2,\R)$-cocycle. Using this, we
show that if a cocycle's homotopy class does not display a certain
obstruction to uniform hyperbolicity, then it can be
$C^0$-perturbed to become uniformly hyperbolic. For cocycles
arising from Schr\"odinger operators, the obstruction vanishes and
we conclude that uniform hyperbolicity is dense, which implies
that for a generic continuous potential, the spectrum of the
corresponding Schr\"odinger operator is a Cantor set.
\end{abstract}

\author[Avila]{Artur Avila}
\address{CNRS UMR 7599,
Laboratoire de Probabilit\'es et Mod\`eles al\'eatoires.
Universit\'e Pierre et Marie Curie--Bo\^\i te courrier 188.
75252--Paris Cedex 05, France}
\curraddr{IMPA, Rio de Janeiro, Brazil}
\urladdr{www.proba.jussieu.fr/pageperso/artur/}
\email{artur@math.sunysb.edu}

\author[Bochi]{Jairo Bochi}
\address{Instituto de Matem\'atica, UFRGS, Porto Alegre, Brazil}
\curraddr{Departamento de Matem\'atica, PUC-Rio, Rio de Janeiro, Brazil}
\urladdr{www.mat.ufrgs.br/~jairo}
\email{jairo@mat.ufrgs.br}

\author[Damanik]{David Damanik}
\address{Department of Mathematics, Rice University, Houston, TX 77005, USA}
\urladdr{www.ruf.rice.edu/$\sim$dtd3} \email{damanik@rice.edu}

\thanks{This research was partially conducted during the period A.~A.\ served as a Clay
Research Fellow.
J.~B.\ was partially supported by a grant from CNPq--Brazil.
D.~D.\ was supported in part by NSF grant DMS--0653720.
We benefited from CNPq and Procad/CAPES support for traveling.}

\date{April 22, 2008}

\maketitle

\section{Statement of the Results}

Throughout this paper we let $X$ be a compact metric space.
Furthermore, unless specified otherwise
$f: X \to X$ will be a strictly ergodic homeomorphism (i.e., $f$ is
minimal and uniquely ergodic) that fibers over an almost periodic
dynamical system. This means that there exists an infinite compact
abelian group $\G$ and an onto continuous map $h:X \to \G$
such that $h(f(x))=h(x)+\alpha$ for some $\alpha \in \G$. Examples
of particular interest include:
\begin{itemize}
\item minimal translations of the $d$-torus $\T^d$, for any $d \ge
1$;
\item the skew-shift $(x,y) \mapsto (x + \alpha, y + x)$ on
$\T^2$, where $\alpha$ is irrational.
\end{itemize}

\subsection{Results for $\SL(2,\R)$-Cocycles}

Given a continuous map $A:X \to \SL(2,\R)$, we consider the
skew-product $X \times \SL(2,\R) \to X \times \SL(2,\R)$ given by
$(x,g) \mapsto (f(x), A(x) \cdot g)$. This map is called the
\emph{cocycle} $(f,A)$. For $n\in \Z$, $A^n$ is defined by
$(f,A)^n = (f^n, A^n)$.

We say a cocycle $(f,A)$ is \emph{uniformly
hyperbolic}\footnote{Some authors say that the cocycle has an
\emph{exponential dichotomy}.} if there exist constants $c>0$,
$\lambda>1$ such that $\|A^n(x)\| > c \lambda^n$ for every $x\in
X$ and $n>0$.\footnote{If $A$ is a real $2 \times 2$ matrix, then
$\|A\| = \sup_{\|v\|\neq 0} \|A(v)\|/\|v\|$, where $\|v\|$ is the
Euclidean norm of $v \in \R^2$.} This is equivalent to the usual
hyperbolic splitting condition: see \cite{Yoccoz}. Recall that
uniform hyperbolicity is an open condition in $C^0(X, \SL(2,\R))$.

We say that two cocycles $(f,A)$ and $(f,\tilde A)$ are
\emph{conjugate} if there exists
a \emph{conjugacy} $B \in C^0(X,\SL(2,\R))$ such that
$\tilde{A}(x) = B(f(x))A(x)B(x)^{-1}$.

\smallskip

Our first result is:

\begin{thm}\label{t.bounded}
Let $f$ be as above. If $A:X \to \SL(2,\R)$ is a continuous map such
that the cocycle $(f,A)$ is not uniformly hyperbolic, then there
exists a continuous $\tilde A: X \to \SL(2,\R)$, arbitrarily
$C^0$-close to $A$, such that the cocycle $(f,\tilde A)$ is
conjugate to an $\SO(2,\R)$-valued cocycle.
\end{thm}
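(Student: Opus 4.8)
The plan is to exploit the dichotomy between uniform hyperbolicity and the existence of an invariant measure on the projective bundle whose fiberwise Lyapunov exponent vanishes. I'll organize the argument around the following steps.

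\smallskip

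\textbf{Step 1: Reduce to the case of zero Lyapunov exponent.} If $(f,A)$ is not uniformly hyperbolic, then either the Lyapunov exponent $L(A)$ (with respect to the unique invariant measure) is zero, or $L(A)>0$ but the cocycle is not uniformly hyperbolic. In the latter case, a first $C^0$-perturbation should be used to decrease the Lyapunov exponent to zero: this is where the ``fibers over an almost periodic system'' hypothesis enters. The idea is that, along a long orbit segment, the dynamics in the base looks almost periodic, so one can pick a return time $N$ for which $h(f^N(x)) \approx h(x)$ uniformly; then one is free to insert a rotation every $N$ steps without destroying continuity of the perturbed cocycle (the rotation can be chosen to depend on $h(x)$, hence is globally well-defined). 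By a pigeonhole/averaging argument à la Bochi--Ma\~n\'e, unless the cocycle is already uniformly hyperbolic, such rotations can be inserted to rotate the Oseledets directions into each other and collapse the exponent. So after an arbitrarily small $C^0$-perturbation we may assume $L(A)=0$.

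\smallskip

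\textbf{Step 2: From zero exponent to an invariant section of the projective bundle.} With $L(A)=0$ and $(f,A)$ not uniformly hyperbolic, I'd like to produce, after a further small perturbation, a continuous map $u : X \to \P^1$ that is invariant, i.e. $A(x)\cdot u(x) = u(f(x))$. First, zero exponent plus unique ergodicity gives (via the subadditive ergodic theorem and compactness of $\P^1$) an invariant probability measure $m$ on $X\times\P^1$ projecting to the invariant measure on $X$, with the property that $\|A^n(x)v\|$ grows subexponentially along $m$-a.e. orbit. The issue is to upgrade this measurable/measure-theoretic invariant direction to a genuine continuous section, and this is where I expect the \emph{main obstacle} to lie. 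The almost periodicity of the base is again the crucial tool: because $f$ fibers over a rotation on $\G$, one can average the invariant direction over the $\G$-action (or use the almost automorphy of the system) to regularize it; alternatively one performs a small perturbation supported near the ``bad'' set where the invariant direction fails to be continuous, using again the freedom to insert $h$-dependent rotations, to make the section continuous. This step is the technical heart of the theorem.

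\smallskip

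\textbf{Step 3: Conjugate to $\SO(2,\R)$.} Once we have a continuous invariant section $u:X\to\P^1$, we choose a continuous lift to a map $X \to \R^2\setminus\{0\}$ (possible, perhaps after passing to a double cover or after a further small perturbation to kill a homotopy obstruction — but since $\P^1$ is a circle and we only need this locally/up to perturbation, orientability can be arranged) and complete it to a continuous frame, producing $B\in C^0(X,\SL(2,\R))$ with $B(f(x))A(x)B(x)^{-1}$ upper triangular:
\[
B(f(x))A(x)B(x)^{-1} = \begin{pmatrix} \lambda(x) & c(x) \\ 0 & \lambda(x)^{-1}\end{pmatrix}.
\]
Now $\int \log|\lambda(x)|\,d\mu = L(A) = 0$ and minimality/unique ergodicity force, after a tiny perturbation, $|\lambda(x)|$ to be cohomologous to $1$; conjugating by a diagonal coboundary we may take $|\lambda(x)| = 1$. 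A final small $C^0$-perturbation sets $c(x)=0$: the point is that the upper-triangular cocycle with $|\lambda|\equiv 1$ is not uniformly hyperbolic, its exponent is zero, and the off-diagonal term $c$ can be absorbed by yet another conjugacy (solving a cohomological equation approximately, using again that the base is almost periodic so small divisors are controlled in the $C^0$ category). This leaves $\tilde A(x)\in\SO(2,\R)$, as desired.

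\smallskip

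In summary: perturb to kill the Lyapunov exponent (Step 1), extract and then regularize an invariant projective section (Step 2, the hard step), and triangularize plus clean up the diagonal and off-diagonal terms by small conjugacies (Step 3). Throughout, the recurring mechanism that makes the perturbations legal is that $f$ fibers over a rotation, so perturbations and conjugacies built from functions of $h(x)\in\G$ are automatically continuous and globally defined.
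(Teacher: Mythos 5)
Your Step 3 contains a genuine gap that cannot be patched within the framework you set up, and it points to the key idea in the paper's proof that your proposal is missing.

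After triangularizing via an invariant direction in $\P^1$ and normalizing the diagonal, you are left with a cocycle
$\begin{pmatrix} \pm 1 & c(x) \\ 0 & \pm 1 \end{pmatrix}$.
You then claim the off-diagonal term $c$ can be ``absorbed by yet another conjugacy (solving a cohomological equation approximately).'' But the approximate solvability of the cohomological equation (Lemma~\ref{l.real} in the paper) only lets you perturb $c$ to be cohomologous to its \emph{mean} $a_0=\int c\,d\mu$, not to zero; $a_0$ cannot be killed by a small perturbation. If $a_0 \neq 0$ you end up conjugate to the constant parabolic $\begin{pmatrix} 1 & a_0 \\ 0 & 1\end{pmatrix}$, which is not an $\SO(2,\R)$-cocycle. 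The constant cocycle $A\equiv\begin{pmatrix} 1 & 1 \\ 0 & 1\end{pmatrix}$ is a concrete example: it has zero exponent, is not uniformly hyperbolic, carries the continuous invariant direction $[(1,0)]\in\P^1$, but your procedure leaves it completely unchanged. In short, a continuous invariant section of the $\P^1$-bundle yields only a triangularization, not a conjugacy to rotations, and the two are separated by a nontrivial (parabolic) obstruction.

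The paper circumvents this by choosing a different fibered space: instead of an invariant section of $X\times\P^1$, it constructs (after perturbation) an invariant section $z:X\to\D$ of the action on the hyperbolic disk $\D$, i.e.\ a continuously varying point of $\D$ fixed by each $A(x)$ (equivalently, a continuously varying inner product preserved by the cocycle). A fixed point in $\D$, transported to the origin by $\Phi(z(x),0)$, \emph{does} conjugate the cocycle into $\SO(2,\R)$, because stabilizers in $\D$ are rotation groups. This replacement of $\P^1$ by $\D$ is the key conceptual move absent from your proposal.

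Two secondary remarks. First, your Step~1 reduction to zero exponent is essentially a citation of Bochi's theorem, which the paper invokes directly and which does not require almost periodicity of the base; the almost periodicity is used, rather, to build the near-invariant section via the Rokhlin-type castle from the continued-fraction approximants of $\alpha$. Second, the proposal omits Furman's theorem (unique ergodicity $+$ zero exponent $\Rightarrow$ \emph{uniform} subexponential growth), which is essential for the quantitative control of how far the propagated section can drift before it must be repaired by the disk adjustment lemma.
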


\begin{rem}
A cocycle $(f,A)$ is conjugate to a cocycle of rotations if and
only if there exists $C>1$ such that $\|A^n(x)\| \le C$ for every
$x\in X$ and $n\in \Z$ (here it is enough to assume that $f$ is
minimal); see \cite{Cam,EJ,Yoccoz}.
\end{rem}

\begin{rem}\label{rem.minimal suffices}
In Theorem~\ref{t.bounded}, one can drop the hypothesis of unique
ergodicity of $f$ (still asking $f$ to be minimal and to fiber
over an almost periodic dynamics), as long as $X$ is finite dimensional.
See Remark~\ref{rem.minimal explanation}.
\end{rem}

Next we focus on the opposite problem of approximating a cocycle by one that
is uniformly hyperbolic.  As we will see, this problem is
related to the important concept of \emph{reducibility}.

To define reducibility, we will need a slight variation of the notion of
conjugacy.  Let us say that two cocycles $(f,A)$ and $(f,\tilde A)$ are
\emph{$\PSL(2,\R)$-conjugate} if there exists $B \in
C^0(X,\PSL(2,\R))$ such that $\tilde{A}(x) = B(f(x))A(x)B(x)^{-1}$ (the
equality being considered in $\PSL(2,\R)$).  We say that $(f,A)$ is
\emph{reducible} if it is $\PSL(2,\R)$-conjugate to a constant cocycle.

\begin{rem}
Reducibility does not imply, in general, that $(f,A)$ is conjugate to a constant cocycle,
which would correspond to taking $B \in C^0(X,\SL(2,\R))$.
For example, let $X=\T^1$, $f(x) = x+\alpha$.
Let $H =\mathrm{diag}(2,1/2)$, and define $A(x) = R_{- \pi
(x+\alpha)} H R_{\pi x}$.\footnote{$R_\theta$ indicates the
rotation of angle $\theta$.} Notice $A$ is continuous,
$(f, A)$ is $\PSL(2,\R)$-, but not $\SL(2,\R)$-, conjugate to a constant.
(For an example where $(f,A)$ is not uniformly hyperbolic, see
Remark~\ref{rem.example}.)
\end{rem}

Let us say that a $\SL(2,\R)$-cocycle $(f,A)$ is \emph{reducible
up to homotopy} if there exists a reducible cocycle $(f,\tilde A)$
such that the maps $A$ and $\tilde A:X\to\SL(2,\R)$ are homotopic.
Let $\Ruth$ be the set of all $A$ such that $(f,A)$ is reducible
up to homotopy.

\begin{rem}
In the case that $f$ is homotopic to the identity map, it is easy
to see that $\Ruth$ coincides with the set of maps
$A:X\to\SL(2,\R)$ that are homotopic to a constant.
\end{rem}

It is well known that there exists an obstruction to approximating a cocycle
by a uniformly hyperbolic one: a uniformly hyperbolic cocycle is always
reducible up to homotopy (see Lemma \ref {l.uh is ruth}).  Our next result
shows that, up to this obstruction, uniform hyperbolicity is dense.

\begin{thm}\label{t.uh is dense}

Uniform hyperbolicity is dense in $\Ruth$.

\end{thm}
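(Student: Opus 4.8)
The plan is to prove $\Ruth \subseteq \cU$, where $\cU$ is the $C^0$-closure of the set of $A \in C^0(X,\SL(2,\R))$ such that $(f,A)$ is uniformly hyperbolic; since uniform hyperbolicity implies reducibility up to homotopy (Lemma~\ref{l.uh is ruth}), this containment is equivalent to the theorem. The set $\cU$ is closed, and, since uniform hyperbolicity is invariant under continuous conjugacy (here one uses that $\sup_X\|B\|$ and $\sup_X\|B^{-1}\|$ are finite), so is $\cU$. Now let $A \in \Ruth$. If $(f,A)$ is uniformly hyperbolic we are done, so assume it is not. By Theorem~\ref{t.bounded} there is $\tilde A$, arbitrarily $C^0$-close to $A$, with $(f,\tilde A)$ conjugate to an $\SO(2,\R)$-valued cocycle $(f,R_\phi)$, $\phi \colon X \to \R/2\pi\Z$ continuous. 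Being $C^0$-close to $A$, the map $\tilde A$ is homotopic to it, so $\tilde A \in \Ruth$, whence $R_\phi \in \Ruth$ by conjugacy-invariance of $\Ruth$. As $\cU$ is closed and conjugacy-invariant, it suffices to show $R_\phi \in \cU$: this gives $\tilde A \in \cU$ for $\tilde A$ arbitrarily close to $A$, hence $A \in \cU$.

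The first substantial step uses $R_\phi \in \Ruth$ and the fact that $\SO(2,\R)$ is abelian to conjugate $(f,R_\phi)$ to a rotation cocycle $(f,R_u)$ with $u \colon X \to \R$ \emph{genuinely real-valued}. Under the identification $[X,\PSL(2,\R)] \cong H^1(X;\Z)$, reducibility up to homotopy forces the class of the $\PSO(2,\R)$-valued reduction of $\phi$ to lie in the image of $f^* - \id$ on $H^1(X;\Z)$; solving the corresponding cohomological equation over $\PSO(2,\R) \cong \R/\pi\Z$ then produces a continuous $B$ with $B(f(x))\,R_\phi(x)\,B(x)^{-1} = R_u(x)$, the remaining null-homotopic part of $\phi$ lifting to a real-valued $u$. (All of this is cleanest in $\PSL(2,\R)$, then lifted back to $\SL(2,\R)$, which is harmless: an $\SL(2,\R)$-cocycle is uniformly hyperbolic iff its projection to $\PSL(2,\R)$ is, and $\cU$ is conjugacy-invariant on both levels.) We are thus reduced to proving: for every continuous $u \colon X \to \R$, one has $R_u \in \cU$.

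For this, recall that the fibered rotation number $\rho(f,R_u) = \tfrac1{2\pi}\int_X u\,d\mu$ depends linearly, hence $C^0$-continuously, on $u$, so a small constant perturbation $u \rightsquigarrow u+s$ moves it freely near its value. The decisive observation is that the set $\{t\in\R : t \bmod 1 \in \{\chi(\alpha) : \chi \in \widehat{\G}\}\}$ is dense in $\R$: the group $\{\chi(\alpha) : \chi \in \widehat{\G}\}$ cannot be a finite subgroup of $\R/\Z$, for then $m\alpha = 0$ for some $m\ge1$ (characters separate points of $\G$), so the $(\G,+\alpha)$-orbit of $\alpha$ would be finite, contradicting minimality of the almost periodic factor; and an infinite subgroup of $\R/\Z$ is dense. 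So, after a small perturbation, we may assume $\rho(f,R_u) \in \{\chi(\alpha)\} + \Z$ for some fixed $\chi \in \widehat{\G}$. Conjugating by $B(x) = R_{-2\pi\chi(h(x))} \in \SO(2,\R)$ — legitimate since $2\pi(\chi\circ h) \colon X \to \R/2\pi\Z$ is continuous, and using $\chi(h(f(x))) - \chi(h(x)) = \chi(\alpha)$ — produces a conjugate cocycle $(f,R_w)$ for which a suitable real-valued lift $w$ has $\int_X w\,d\mu = 0$. Now unique ergodicity gives $\tfrac1N\sum_{n=0}^{N-1}w\circ f^n \to 0$ uniformly, so with $v_N = -\sum_{n=0}^{N-1}(1-\tfrac nN)\,w\circ f^n$ one computes $v_N\circ f - v_N = w - \tfrac1N\sum_{n=1}^{N}w\circ f^n \to w$ in $C^0$; hence $(f,R_w)$ is a $C^0$-limit of the cocycles $(f,R_{v_N\circ f - v_N})$, each of which is conjugate, via $R_{v_N}$, to $(f,\Id)$. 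Since $(f,\Id) = \lim_{\epsilon\to0}(f,\mathrm{diag}(e^\epsilon,e^{-\epsilon}))$ in $C^0$ and the latter cocycles are uniformly hyperbolic, $(f,\Id) \in \cU$; therefore $(f,R_{v_N\circ f - v_N}) \in \cU$, then $(f,R_w) \in \cU$, then $(f,R_u) \in \cU$, the last step because $(f,R_u)$ is conjugate to $(f,R_w)$ up to the arbitrarily small perturbation and $\cU$ is closed. This proves the claim and hence the theorem.

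I expect the main difficulty to lie in the cohomological reduction of the second paragraph — passing from ``$R_\phi \in \Ruth$'' to ``$(f,R_\phi)$ is conjugate to a rotation cocycle with real-valued angle'' — which requires careful bookkeeping of homotopy classes under the double cover $\SL(2,\R) \to \PSL(2,\R)$ and of which conjugacies are available in $\SL(2,\R)$ rather than only in $\PSL(2,\R)$. The density of the resonant rotation numbers, where minimality of the almost periodic factor is genuinely used, is the conceptual heart but is brief; and the concluding construction of approximating uniformly hyperbolic cocycles is then routine.
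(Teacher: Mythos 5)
Your argument is correct, but it takes a genuinely different route from the paper's. The paper derives Theorem~\ref{t.uh is dense} in two lines from the stronger Theorem~\ref{t.reduce}(b) (density of reducibility, in fact closure of $\PSL(2,\R)$-conjugacy classes of constants), whose proof chains Lemma~\ref{l.reverse}, Lemma~\ref{l.b}, Theorem~\ref{t.bounded} and Lemma~\ref{l.real}. You instead prove $\Ruth \subseteq \cU$ directly, and the main structural difference is that you avoid Lemma~\ref{l.real} entirely: once you have reduced (via Theorem~\ref{t.bounded} and a cohomological/$H^1$ argument, which plays the role of Lemmas~\ref{l.b} and \ref{l.reverse}) to a real-angled rotation cocycle $R_w$ with $\int w\,d\mu = 0$, you merely need $w$ to be a $C^0$-\emph{limit} of coboundaries $v_N\circ f - v_N$, which follows from unique ergodicity alone via the Ces\`aro-weighted sums $v_N = -\sum_{n<N}(1-n/N)\,w\circ f^n$; the paper's Lemma~\ref{l.real} produces exact coboundaries (needed for dense reducibility) and consequently requires the almost-periodic-factor machinery of \S\ref{ss.almost invariant}. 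Your use of the factor structure is confined to the density of $\{\chi(\alpha) : \chi \in \widehat{\G}\} + \Z$ in $\R$, which nicely unifies the circle and Cantor cases that the paper treats separately (\S\ref{ss.final} vs.\ \S\ref{ss.cantor groups}). The trade-off is clear: your proof is shorter and more self-contained for the specific statement of Theorem~\ref{t.uh is dense}, while the paper's route additionally yields Theorem~\ref{t.reduce} as a by-product. One small point worth spelling out when you write this up in full: the passage from $R_\phi \in \Ruth$ to the cohomological condition $[\pi(R_\phi)] \in \mathrm{Im}(f^* - \id)$ is done at the level of $\PSL(2,\R)$, and the conjugacy $B$ you produce is $\PSO(2,\R)$-valued (generally not liftable to $\SO(2,\R)$); so the claim "$\cU$ is invariant under $\PSL(2,\R)$-conjugacy" needs the observation that a $\PSL(2,\R)$-cocycle $C^0$-close to one that lifts also lifts (and one can choose lifts converging in $C^0$), which is implicit in the paper's proof of Theorem~\ref{t.uh is dense} as well.
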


This result is obtained as a consequence of a detailed
investigation of the problem of denseness of reducibility:

\begin{thm}\label{t.reduce}
Reducibility is dense in $\Ruth$.  More precisely:
\begin{enumerate}
\item If $(f,A)$ is uniformly hyperbolic, then it can be
approximated by a reducible cocycle {\rm (}which is uniformly
hyperbolic{\rm )}. \item If $A \in \Ruth$, but $(f,A)$ is not
uniformly hyperbolic and $A_* \in \SL(2,\R)$ is non-hyperbolic
{\rm (}i.e., $\left|\tr A_* \right| \leq 2${\rm )}, then $(f,A)$
lies in the closure of the $\PSL(2,\R)$ conjugacy class of
$(f,A_*)$.
\end{enumerate}
\end{thm}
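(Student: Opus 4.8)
The plan is to reduce everything to Theorem~\ref{t.bounded}, which hands us, near any non-uniformly-hyperbolic $A$, a cocycle conjugate to an $\SO(2,\R)$-valued one; the remaining work is to perturb a bounded (rotation-type) cocycle to a reducible one, using the almost-periodic fibration $h\colon X\to\G$ in an essential way. Part (a) is the easy case: if $(f,A)$ is uniformly hyperbolic, it has a continuous invariant splitting $\R^2 = E^s(x)\oplus E^u(x)$, and I would use the directions $E^{s,u}$ to build $B\in C^0(X,\PSL(2,\R))$ conjugating $A$ to a cocycle taking values in the group of diagonal matrices; then one perturbs the diagonal cocycle (which is a pair of additive $\R$-valued cocycles over the exponents) to one cohomologous to a constant. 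Here I would invoke the standard fact that over a strictly ergodic (indeed uniquely ergodic) system a continuous real-valued cocycle can be $C^0$-perturbed to be cohomologous to its mean — or alternatively just note that a diagonal cocycle with a single Lyapunov exponent can be made reducible by a small perturbation of the off-diagonal-free entries — and openness of uniform hyperbolicity guarantees the perturbed cocycle is still uniformly hyperbolic.

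For part (b), by Theorem~\ref{t.bounded} I may assume after an arbitrarily small $C^0$-perturbation and a conjugacy that $A$ itself takes values in $\SO(2,\R)$, say $A(x)=R_{\phi(x)}$ for a continuous $\phi\colon X\to\R/\Z$ (continuity of a lift of $\phi$ is not assumed, only of $R_\phi$). Since $A\in\Ruth$ and $A$ is now homotopic to an $\SO(2,\R)$-cocycle, the homotopy obstruction forces $\phi$ to be homotopic to a constant map $X\to\R/\Z$ if $A_*$ is elliptic, and to be null-homotopic when $A_*=\pm\Id$ or is parabolic; in any case one checks the degree condition matches, so after another small perturbation we may take $\phi$ to have a continuous real lift $\tilde\phi\colon X\to\R$. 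Now the game is to solve an approximate cohomological equation: I want $B\in C^0(X,\PSL(2,\R))$, close to reducing $R_\phi$ to $R_{\theta_*}$, where $R_{\theta_*}$ represents the conjugacy class of $A_*$ (when $A_*$ is parabolic or $\pm\Id$ one aims instead directly at $A_*$, but these can be absorbed into the elliptic case by a further small perturbation making $A_*$ genuinely elliptic of small rotation number, since $\Ruth$ is open-ish along the homotopy and the target class is the issue, not its precise value). This amounts to finding continuous $\psi\colon X\to\R$ with $\psi(f(x))-\psi(x)\approx \tilde\phi(x)-\theta_*$ in the $C^0$ norm.

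The cohomological equation is where the almost-periodic fibering is used, and it is the main obstacle. The point is that $\tilde\phi$ need not be cohomologous to a constant over $f$, but one can exploit that $f$ semiconjugates to the translation $R_\alpha$ on the infinite compact abelian group $\G$ via $h$. Using characters of $\G$ and the minimality of $f$, I would decompose (in an approximate, $C^0$ sense) the function $\tilde\phi - \theta_*$ into a part that is visibly an $f$-coboundary plus a part pulled back from $\G$, and then — after a further small perturbation that adjusts $\theta_*$ — kill the $\G$-part using the structure of translations on a compact abelian group (where every continuous function whose integral against Haar measure vanishes can be $C^0$-approximated by coboundaries for an irrational translation, by approximating $\alpha$-orbits and using Weyl/equidistribution together with a diophantine-free averaging argument). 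The reader should expect that the honest version of this step requires care: one must simultaneously arrange that $\psi$ stays continuous, that the error is uniformly small (not just $L^2$-small), and that the residual constant obtained for $\theta_*$ is non-hyperbolic (which is where the hypothesis $|\tr A_*|\le 2$ enters — any constant we land on is automatically in $\overline{\{(f,A_*)\}^{\PSL(2,\R)}}$ because all non-hyperbolic constants with the same rotation number are conjugate, and rotation numbers of a $C^0$-continuous family vary continuously, so a small enough perturbation keeps us in the right conjugacy class). Packaging these estimates so the final cocycle is genuinely $\PSL(2,\R)$-conjugate (and not merely close) to $(f,A_*)$ is the delicate part; everything else is soft topology and the already-granted Theorem~\ref{t.bounded}.
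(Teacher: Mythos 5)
Your overall architecture is right for part~(a) and for the first half of part~(b): invoke Theorem~\ref{t.bounded} to get into $\SO(2,\R)$, use the $\Ruth$ hypothesis to get a continuous lift $\phi$, and solve an approximate cohomological equation to land on a constant $R_\theta$. This matches the paper, which isolates the cohomological step as Lemma~\ref{l.real} and the lift step as Lemmas~\ref{l.reverse} and~\ref{l.b}. But there are two genuine problems.

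First, you describe the density of $C^0$-coboundaries (after subtracting the mean) as ``the standard fact that over a strictly ergodic system a continuous real-valued cocycle can be $C^0$-perturbed to be cohomologous to its mean.'' This is not a standard fact, and it is false for general uniquely ergodic systems; it is exactly the content of Lemma~\ref{l.real} and is the place where the hypothesis that $f$ fibers over an almost periodic factor is used. Your sketch (characters of $\G$, equidistribution, ``diophantine-free averaging'') does not produce a proof: the honest argument in the paper is a Rokhlin-tower construction on $h^{-1}(I_i)$ built from continued-fraction intervals (the circle case) or clopen fibers (the Cantor case), with careful matching conditions at the tower base to keep the section continuous and the error uniformly $O(\delta)$. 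Treating Lemma~\ref{l.real} as a black box would be fine; asserting it as standard is a gap.

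Second, and more seriously, your final step --- getting from $(f,R_\theta)$ into the closure of the $\PSL(2,\R)$-conjugacy class of $(f,A_*)$ --- is wrong as stated. You argue via ``all non-hyperbolic constants with the same rotation number are conjugate, and rotation numbers vary continuously, so a small perturbation keeps us in the right class.'' But matrix conjugacy of $R_\theta$ and $A_*$ gives only a constant conjugator, and the cocycle $(f,R_\theta)$ you obtain has no a priori relation to the prescribed $A_*$: you cannot choose $\theta$ in advance, it is produced by Lemma~\ref{l.real}. Rotation-number continuity also does not determine the conjugacy class. The paper's crucial trick, which you are missing, is the non-constant conjugator $B(x)=B_*\,R_{2\pi k\,h(x)}$, which exploits the identity $h(f(x))=h(x)+\alpha$ to give $B(f(x))\,R_\theta\,B(x)^{-1}=B_*\,R_{2\pi k\alpha+\theta}\,B_*^{-1}$. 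Since $\alpha$ generates a dense subgroup of $\T^1$, one chooses $k$ so that $2\pi k\alpha+\theta$ is close to the target angle $\beta$ (where $A_*=B_*^{-1}R_\beta B_*$) and then perturbs $\theta$ slightly to hit $\beta$ exactly; the parabolic case is then handled by first landing on $\pm\Id$ and perturbing to a parabolic. Without this device, which is a second and independent use of the almost-periodic fibering $h$, the argument does not close.
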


\begin{proof}[Proof of Theorem~\ref {t.uh is dense}]

The closure of the
set of uniformly hyperbolic cocycles is obviously invariant under
$\PSL(2,\R)$ conjugacies, and clearly
contains all constant cocycles $(f,A_*)$ with $\tr A_*=2$.  The result
follows from the second part of Theorem~\ref {t.reduce}.
\end{proof}

\begin{rem}

It would be interesting to investigate also the closure of an arbitrary
$\PSL(2,\R)$ conjugacy class.  Even the case of the $\PSL(2,\R)$ conjugacy
class of a constant hyperbolic cocycle already escapes our methods.

\end{rem}

Let us say a few words about the proofs and relation with the
literature. In the diffeomorphism and flow settings, Smale
conjectured in the 1960's that hyperbolic dynamical systems are
dense. This turned out to be false in general. However, there are
situations where denseness of hyperbolicity holds; see, for
example, the recent work \cite{KSS} in the context of
one-dimensional dynamics.

Cong~\cite{Cong} proved that uniform hyperbolicity is (open and)
dense in the space of $L^\infty(X, \SL(2,\R))$-cocycles, for any
base dynamics $f$. So our Theorem~\ref{t.uh is dense} can be seen
as a continuous version of his result. Cong's proof involves a
tower argument to perturb the cocycle and produce an invariant
section for its action on the circle $\P^1$. We develop a somewhat
similar technique, replacing $\P^1$ with other spaces. Special
care is needed in order to ensure that perturbations and sections
be continuous.

Another related result was obtained by Fabbri and Johnson who
considered continuous-time systems over translation flows on
$\T^d$ and proved for a generic translation vector that uniform
hyperbolicity occurs for an open and dense set of cocycles; see
\cite{FJ}.

\subsection{Results for Schr\"odinger Cocycles}

We say $(f,A)$ is a \emph{Schr\"odinger cocycle} when $A$ takes its values in the set
\begin{equation}\label{sdefinition}
S = \left\{ \begin{pmatrix} t & -1 \\ 1 & 0 \end{pmatrix} ; \; t \in \R \right\} \subset \SL(2,\R) \, .
\end{equation}
The matrices $A^n$ arising in the iterates of a Schr\"odinger
cocycle are the so-called transfer matrices associated with a
discrete one-dimensional Schr\"odinger operator.

More explicitly, given $V \in C^0(X,\R)$ (called the \emph{potential})
and $x \in X$, we
consider the operator
\begin{equation}\label{oper}
(H_x \psi)_n = \psi_{n+1} + \psi_{n-1} + V(f^n x) \psi_n
\end{equation}
in $\ell^2(\Z)$. Notice that $u$ solves the difference equation
\begin{equation}\label{eve}
u_{n+1} + u_{n-1} + V(f^n x) u_n = E u_n
\end{equation}
if and only if
\begin{equation}\label{evemat}
\begin{pmatrix} u_n \\ u_{n-1} \end{pmatrix} = A^n_{E,V} \begin{pmatrix} u_0 \\
u_{-1}
\end{pmatrix},
\end{equation}
where $(f,A_{E,V})$ is the Schr\"odinger cocycle with
$$
A_{E,V}(x) = \begin{pmatrix} E - V(x) & - 1 \\ 1 & 0
\end{pmatrix}.
$$
Properties of the spectrum and the spectral measures of the
operator \eqref{oper} can be studied by looking at the solutions
to \eqref{eve} and hence, by virtue of \eqref{evemat}, the
one-parameter family of Schr\"odinger cocycles $(f,A_{E,V})$.
Using minimality of $f$, it follows quickly by strong operator
convergence that the spectrum of $H_x$ is independent of $x \in X$
and we may therefore denote it by $\Sigma$. It is well known that
$\Sigma$ is a perfect set; as a spectrum it is closed and there
are no isolated points by ergodicity of $f$ and
finite-dimensionality of the solution space of \eqref{eve} for
fixed $E$. Johnson \cite{J} (see also Lenz~\cite{L}) showed that
$\Sigma$ consists of those energies $E$, for which $(f,A_{E,V})$
is not uniformly hyperbolic:
\begin{equation}\label{jlresult}
\R \setminus \Sigma = \{ E \in \R ; (f,A_{E,V}) \text{ is
uniformly hyperbolic} \}.
\end{equation}

Our results have natural versions for Schr\"odinger cocycles, with
the added simplification that all such cocycles are homotopic to a
constant. Simple repetition of the proofs leads to difficulties in
the construction of perturbations (since there are fewer
parameters to vary). We prove instead a general reduction result,
which is of independent interest. Recall the definition
\eqref{sdefinition} of the set $S$.

\begin{thm}\label{t.P}
Let $f:X \to X$ be a minimal homeomorphism of a compact metric space.
Let $P$ be any conjugacy-invariant property of $\SL(2,\R)$-valued
cocycles over $f$.
If $A \in C^0(X,S)$ can be approximated by
$\SL(2,\R)$-valued cocycles with property $P$, then
$A$ can be approximated by $S$-valued cocycles with property~$P$.
\end{thm}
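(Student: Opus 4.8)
The plan is to reduce the approximation problem from $\SL(2,\R)$-cocycles to $S$-valued cocycles by exploiting the geometry of $S$ inside $\SL(2,\R)$ together with minimality of $f$. First I would recall that every element of $S$ is the product of a fixed reflection with a matrix that is symmetric and positive-definite: indeed $\begin{pmatrix} t & -1 \\ 1 & 0 \end{pmatrix} = \begin{pmatrix} t & 1 \\ 1 & 0 \end{pmatrix}\begin{pmatrix} 0 & -1 \\ 1 & 0 \end{pmatrix}$, or, more usefully, any $A\in\SL(2,\R)$ has a polar-type / Cartan decomposition $A = R_\theta \cdot W$ with $W$ symmetric positive-definite, and $S$ can be described (up to a rotation on the left) as the set where the ``angular part'' is determined by the entries in a specific way. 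The key structural point to extract is that $S$ meets every $\PSL(2,\R)$-conjugacy class of non-hyperbolic constants and that conjugation lets one steer an arbitrary perturbation of $A$ back into $S$ while only moving it $C^0$-little, provided $A$ itself already lies in $S$.

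The main step I would carry out is the following normalization lemma: if $A\in C^0(X,S)$ and $\tilde A\in C^0(X,\SL(2,\R))$ is $C^0$-close to $A$, then there is $B\in C^0(X,\SL(2,\R))$, also $C^0$-close to the identity, such that $x\mapsto B(f(x))\tilde A(x)B(x)^{-1}$ takes values in $S$. To build $B$ I would use that for $A(x)\in S$ and $\tilde A(x)$ nearby, one can write $\tilde A(x) = Q(x)A(x)$ with $Q(x)\in\SL(2,\R)$ close to the identity, and then absorb $Q$ into a coboundary: the obstruction to solving $B(f(x))A(x)B(x)^{-1}\in S$ for the right cohomological class is governed by a finite-dimensional ``defect'' living in the transverse directions to $S$ in $\SL(2,\R)$ at each point. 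Since $S$ is a smooth two-dimensional submanifold (a graph over the $t$-coordinate after fixing a rotation), the transverse defect is captured by a single real function on $X$, and because $f$ is minimal this function can be absorbed by a continuous coboundary after an arbitrarily small further perturbation — here is where I would invoke minimality, exactly as in Theorem~\ref{t.bounded}'s style of argument, using that $P$ is conjugacy-invariant to keep property $P$ throughout. The cocycle $B(f(x))\tilde A(x)B(x)^{-1}$ still has property $P$ (conjugacy invariance), is $S$-valued, and is $C^0$-close to $A$ since $B$ is close to $\Id$ and $\tilde A$ close to $A$.

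To assemble the full proof: given that $A$ is approximable by $\SL(2,\R)$-cocycles with property $P$, pick such a $\tilde A$ within $\epsilon$ of $A$; apply the normalization lemma to replace $\tilde A$ by a conjugate $S$-valued cocycle $\hat A$ within $O(\epsilon)$ of $A$; by conjugacy-invariance $\hat A$ has property $P$; let $\epsilon\to 0$. The one technical point that needs care is that conjugating by $B$ can a priori increase the $C^0$-distance by a multiplicative constant depending on $\|B\|_\infty$, but since we take $B$ close to $\Id$ that constant is $1+o(1)$, so the estimate closes.

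\textbf{Expected main obstacle.} The hard part will be the normalization lemma, specifically producing the coboundary $B$ \emph{continuously} on all of $X$ while keeping it small: one must show that the real-valued transverse defect function is itself a continuous coboundary up to small error, and the only tool available is minimality of $f$ (not unique ergodicity), so one needs a Gottschalk--Hedlund-type or tower-type argument that tolerates an arbitrarily small residual term — this is precisely the kind of delicate continuous-perturbation bookkeeping the authors flag in the paragraph preceding the theorem when they say ``simple repetition of the proofs leads to difficulties in the construction of perturbations.'' I would expect the actual proof to finesse this by a clever algebraic choice of $B$ (e.g.\ built directly from the polar decomposition of $\tilde A$ relative to that of $A$) that makes the $S$-valued-ness automatic rather than cohomological, avoiding the solvability issue entirely.
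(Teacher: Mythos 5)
Your proposal has a genuine gap at the heart of the argument. The ``normalization lemma'' you state is essentially the entire content of the theorem, and the mechanism you propose for proving it --- absorbing a transverse defect function via a coboundary $w\circ f - w$ --- cannot work in the generality claimed. Theorem~\ref{t.P} assumes only that $f$ is minimal, not uniquely ergodic, so the cohomological equation is not even approximately solvable in $C^0$ for a generic real function: a coboundary has equal integral against every $f$-invariant measure, and a minimal system can carry many. Lemma~\ref{l.real}, which is the paper's cohomological tool, uses both the almost-periodic factor $h:X\to\G$ and unique ergodicity, neither of which is available under the hypotheses of Theorem~\ref{t.P}. There is also a smaller error: $S$ is a one-dimensional affine submanifold of the three-dimensional group $\SL(2,\R)$, so the ``transverse defect'' is two-dimensional, not one-dimensional as you assert when you say it is ``captured by a single real function.''

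Your closing guess --- that the real proof ``finesses this by a clever algebraic choice of $B$\ldots that makes the $S$-valued-ness automatic rather than cohomological'' --- is exactly right in spirit, but a guess is not a proof, and the specific construction is the whole point. The paper's argument (Lemma~\ref{generalschrodinger}, via Lemmas~\ref{localize} and \ref{localizeschrodinger}) avoids cohomology entirely in two steps. First, a localization conjugacy: using only minimality (the forward orbit of any small open set $V$ covers $X$), one conjugates any nearby $\tilde A$ so that it agrees with $A$ outside a compact set $K=\overline V$ with $K$, $f(K)$, $f^2(K)$ pairwise disjoint. Second, a factorization trick: the map $(B_1,B_2,B_3)\mapsto B_3B_2B_1$ is an analytic diffeomorphism from $\{(B_1,B_2,B_3)\in S^3:\,\tr B_2\neq 0\}$ onto an explicit open subset of $\SL(2,\R)$, which allows one to rewrite the localized perturbation on the orbit segment $\{x,f(x),f^2(x)\}$ as a triple of $S$-matrices, conjugating a localized $\SL(2,\R)$-perturbation into an $S$-valued one. (A separate short argument disposes of the degenerate case $\tr A\equiv 0$.) The idea you are missing is precisely this redistribution of the perturbation over orbit segments of length three, which converts what looks like an overdetermined pointwise constraint into a finite, explicitly solvable algebraic problem, with no appeal to the cohomological equation at all.
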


We are even able to treat the case of more regular cocycles.

\begin{thm} \label {t.regular}
Let $1 \leq r \leq \infty$ and let $0 \leq s \leq r$.
Let $X$ be a $C^r$ compact manifold and let
$f:X \to X$ be a minimal $C^r$-diffeomorphism.
Let $P$ be any property of $\SL(2,\R)$-valued
$C^r$ cocycles over $f$ which is invariant by $C^r$-conjugacy.
If $A \in C^s(X,S)$ can be $C^s$-approximated by
$\SL(2,\R)$-valued cocycles with property $P$, then
$A$ can be $C^s$-approximated by $S$-valued cocycles with property~$P$.
\end{thm}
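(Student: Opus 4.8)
The plan is to reduce everything to the $C^0$ statement of Theorem~\ref{t.P} by an approximation-in-the-base argument. The key observation is that $S$ is an affine subspace of $\SL(2,\R)$: membership in $S$ is cut out by three linear equations on the matrix entries. So the natural thing to try is: given $A \in C^s(X,S)$ and a $C^r$ cocycle $\tilde A$ with property $P$ that is $C^s$-close to $A$, project $\tilde A$ back onto $S$ fiberwise. The naive orthogonal (or entrywise) projection $\pi \colon \SL(2,\R) \dashrightarrow S$ is only defined and smooth near $S$, which is fine since $\tilde A$ is close to $A(X) \subset S$; composing gives $\pi \circ \tilde A \in C^{\min(r,s)} = C^s$-close to $A$ and $S$-valued. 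The problem is that $P$ is only assumed invariant under $C^r$-conjugacy, and $\pi \circ \tilde A$ need not be $C^r$-conjugate to $\tilde A$ --- indeed $\pi \circ \tilde A$ need not even have property~$P$. So the real content, exactly as in the $C^0$ case handled by Theorem~\ref{t.P}, must be to realize the ``projection onto $S$'' up to conjugacy, not on the nose.

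The mechanism I expect the authors to use is the one implicit in Theorem~\ref{t.P}: a cocycle $\tilde A$ near $A(X)\subset S$ can be \emph{conjugated}, by a $C^r$ map $B \colon X \to \SL(2,\R)$ that is $C^s$-close to the identity, to a cocycle that genuinely takes values in $S$. Concretely, writing $\tilde A(x)$ in a neighborhood of $S$ and using that any matrix close to $S$ has first column of the form $(\ast,\ast)^T$ with the bottom entry close to $0$ and the full matrix close to $\begin{pmatrix} t & -1 \\ 1 & 0\end{pmatrix}$, one chooses $B(x)$ to absorb the ``defect'' of $\tilde A(x)$ from $S$. The catch is that conjugating a cocycle by $B$ produces $B(f(x))\tilde A(x) B(x)^{-1}$, so putting \emph{one} iterate into $S$ is a pointwise algebraic adjustment, but ensuring the conjugated cocycle is everywhere $S$-valued requires solving, uniformly in $x$, a system that couples $x$ and $f(x)$. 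I would set it up as: choose $B(x)$ depending only on $x$ (not on $\tilde A$ at $f(x)$), so the defect at $x$ is corrected by $B(x)^{-1}$ acting on the right while $B(f(x))$ on the left is harmless because it only changes which representative in $S$ we land on --- this is precisely why $S$, with its large ($1$-parameter) fiber and the transitivity properties used in the $C^0$ proof, is the right target. Minimality of $f$ enters to propagate the correction globally. The regularity bookkeeping is then routine: $B \in C^r$ because $\tilde A \in C^r$ and the correction map is smooth near $S$; $B$ is $C^s$-close to $\Id$ because $\tilde A$ is $C^s$-close to $A \in C^s(X,S)$, so the defect is $C^s$-small; and $C^r$-conjugacy invariance of $P$ gives property $P$ for the result.

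So the skeleton is: (1) fix a smooth retraction/correction map defined on a neighborhood $\cU$ of $S$ in $\SL(2,\R)$, sending each $g \in \cU$ to a pair $(B_g, s_g)$ with $B_g$ near $\Id$, $s_g \in S$, and $B_g \cdot g$ (or $g \cdot B_g^{-1}$, appropriately) lying in $S$, depending smoothly on $g$ and equalling $(\Id, g)$ when $g \in S$; (2) given $A \in C^s(X,S)$ and the hypothesis, pick a $C^r$ cocycle $\tilde A$ with property $P$ that is $C^s$-close to $A$, close enough that $\tilde A(X) \subset \cU$; (3) set $B(x) = B_{\tilde A(x)} \in C^r(X,\SL(2,\R))$, and let $\hat A(x) = B(f(x)) \tilde A(x) B(x)^{-1}$; verify $\hat A(x) \in S$ for all $x$ --- this is the step that uses the special structure of $S$ and is where I expect the main work, since it is exactly the continuous analogue of the heart of Theorem~\ref{t.P}; (4) check $\hat A \in C^r$, $\hat A$ is $C^s$-close to $A$ (because $B$ is $C^s$-close to $\Id$), and $\hat A$ has property $P$ by $C^r$-conjugacy invariance. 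The only genuinely delicate point beyond Theorem~\ref{t.P} is that the conjugacy $B$ must have the \emph{same} regularity $C^r$ as $\tilde A$ while being $C^s$-small; this is automatic from the construction provided the neighborhood $\cU$ and the correction map are fixed independently of $\tilde A$, which is why I would isolate step~(1) first. The main obstacle, then, is arranging step~(3): that a single, $x$-local, smoothly-varying conjugacy genuinely pushes the whole cocycle into $S$ --- equivalently, that the ``defect from $S$'' can always be conjugated away, which is a structural fact about $S \subset \SL(2,\R)$ that one proves once and for all at the level of matrices.
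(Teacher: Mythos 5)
Your high-level picture is right --- one should replace the naive fiberwise projection onto $S$ by a conjugacy --- but the concrete mechanism you propose, a pointwise correction $B(x)=B_{\tilde A(x)}$ determined by the single matrix $\tilde A(x)$, cannot work. The constraint $B(f(x))\tilde A(x)B(x)^{-1}\in S$ couples $B$ at $x$ and at $f(x)$. For a fixed map $g$ with $B(x)=g(\tilde A(x))$ you would need $g(N)\,M\,g(M)^{-1}\in S$ simultaneously for (essentially) all pairs $M,N$ of matrices near $S$, because $\tilde A(x)$ and $\tilde A(f(x))$ are unconstrained beyond being close to $S$. The left stabilizer of the affine line $S$ in $\SL(2,\R)$ is just the one-parameter unipotent upper-triangular group, which is far too small to absorb the freedom in $N$. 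Your remark that ``$B(f(x))$ on the left is harmless because it only changes which representative in $S$ we land on'' is exactly where this breaks: left multiplication by a general element near $\Id$ does \emph{not} preserve $S$. So the ``structural fact about $S$ that one proves once and for all at the level of matrices'' does not exist in the pointwise form you need.

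The paper resolves this by making the conjugacy genuinely global and dynamical, in two stages (this is Lemma~\ref{generalschrodinger}, decomposed into Lemmas~\ref{localize} and~\ref{localizeschrodinger}). First, using minimality and a finite cover of $X$ by ``towers,'' one conjugates the given $\SL(2,\R)$-valued perturbation of $A$ to one whose difference from $A$ is supported in a small compact set $K$ with $K$, $f(K)$, $f^2(K)$ pairwise disjoint; the conjugacy is built by sweeping the defect along orbits with $C^r$ bump functions, and it is here that the dynamics (not a per-point algebraic fact) carries the load. Second, once the perturbation is localized, a genuinely local algebraic fact applies: the map $(B_1,B_2,B_3)\mapsto B_3B_2B_1$ from $S^3$ (with $\tr B_2\neq 0$) to the set of matrices with nonzero lower-right entry is an analytic diffeomorphism, so the triple $\bigl(A(f^{-1}(z)),B(z),A(f(z))\bigr)$, for $z\in K$, can be re-factored into three $S$-valued matrices, giving the desired $S$-valued cocycle conjugate to the perturbed one. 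Your instinct that there is a matrix-level factorization lemma is correct, but it is a three-step product factorization that only becomes usable \emph{after} localization, which your plan omits. You also do not address the degenerate case $\tr A\equiv 0$, which the paper handles by a preliminary $C^r$-small conjugacy making the trace non-identically zero before invoking the projection lemma.

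Your regularity bookkeeping comments are sound and match the paper: the maps are built from fixed smooth data (cutoffs, the diffeomorphism $\eta$), so they send $C^s$ to $C^s$ for all $0\le s\le r$, and $C^r$-conjugacy invariance of $P$ is exactly what is used at the end.
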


\begin{rem}
Having in mind applications to other types of difference equations,
it would be interesting to investigate the validity of the above results
for more general classes of sets.
\end{rem}

It follows from Theorems~\ref{t.uh is dense} and \ref{t.P} that
uniformly hyperbolic Schr\"odinger cocycles are $C^0$-dense.  This has
the following corollary:

\begin{corol}\label{c.cantor}
For a generic $V \in
C^0(X,\R)$, we have that $\R \setminus \Sigma$ is dense. That is,
the associated Schr\"odinger operators have Cantor spectrum.
\end{corol}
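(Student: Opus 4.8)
The plan is a Baire category argument in the space $C^0(X,\R)$ with the uniform norm, which is a complete metric space and hence a Baire space. Fix a countable basis $\{I_k\}_{k\in\N}$ of the topology of $\R$ consisting of open intervals, and for each $k$ set
\[
\cU_k = \{\, V \in C^0(X,\R) : \text{there is } E \in I_k \text{ with } (f,A_{E,V}) \text{ uniformly hyperbolic} \,\}.
\]
By the Johnson--Lenz characterization \eqref{jlresult}, we have $V \in \cU_k$ if and only if $I_k \cap (\R\setminus\Sigma)\neq\emptyset$. Hence if $V$ lies in $\cR := \bigcap_k \cU_k$, then $\R\setminus\Sigma$ meets every basic interval and is therefore dense; since $\Sigma$ is moreover closed and perfect, as already recalled, having dense complement makes it a Cantor set. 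So it suffices to show that each $\cU_k$ is open and dense in $C^0(X,\R)$, which makes $\cR$ residual.

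Openness is immediate: uniform hyperbolicity is an open condition in $C^0(X,\SL(2,\R))$, and the map $(E,V)\mapsto A_{E,V}$ is continuous from $\R\times C^0(X,\R)$ to $C^0(X,\SL(2,\R))$. Thus if $(f,A_{E_0,V_0})$ is uniformly hyperbolic for some $E_0\in I_k$, then $(f,A_{E_0,V})$ stays uniformly hyperbolic for all $V$ in a uniform neighborhood of $V_0$, so such $V$ belong to $\cU_k$.

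For density, fix $V_0\in C^0(X,\R)$ and $\epsilon>0$, and choose any $E_0\in I_k$. The Schr\"odinger cocycle $(f,A_{E_0,V_0})$ is $S$-valued, and by the density of uniformly hyperbolic Schr\"odinger cocycles in $C^0(X,S)$ --- which, as noted just above, follows from Theorems~\ref{t.uh is dense} and~\ref{t.P} --- there is a uniformly hyperbolic cocycle $\tilde A\in C^0(X,S)$ with $\|\tilde A - A_{E_0,V_0}\|_{C^0}<\epsilon$. Writing $\tilde A(x)=\begin{pmatrix} t(x) & -1 \\ 1 & 0\end{pmatrix}$ with $t\in C^0(X,\R)$ and putting $V(x):=E_0-t(x)$, we get $A_{E_0,V}=\tilde A$, so $(f,A_{E_0,V})$ is uniformly hyperbolic with $E_0\in I_k$, while $\|V-V_0\|_{C^0}=\|\tilde A - A_{E_0,V_0}\|_{C^0}<\epsilon$ since the difference of two $S$-valued matrices has the form $\mathrm{diag}(\,\cdot\,,0)$, whose operator norm is the absolute value of its top entry. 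Hence $V\in\cU_k$, proving density; $\cR$ is then the desired residual set.

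There is no genuine obstacle here beyond assembling the earlier results; the one point requiring a moment's care is that the density input is used at a single \emph{fixed} energy $E_0$ and then reinterpreted as a perturbation of the potential rather than of the energy. This is legitimate precisely because $A_{E,V}(x)$ depends on $E$ and $V(x)$ only through the difference $E-V(x)$, so varying $V$ with $E_0$ held fixed already sweeps out a full $C^0$-neighborhood of $A_{E_0,V_0}$ inside $C^0(X,S)$.
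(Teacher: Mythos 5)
Your proof is correct and takes essentially the same route as the paper: a Baire category argument combining \eqref{jlresult} with the density of uniformly hyperbolic $S$-valued cocycles (from Theorems~\ref{t.uh is dense} and~\ref{t.P}). The only cosmetic difference is the indexing of the residual intersection---you use a countable basis of open intervals $\{I_k\}$ and the sets $\cU_k$, whereas the paper fixes a countable dense set of energies $\{E_n\}$ and intersects the open dense sets $UH_{E_n}$; these are interchangeable.
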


\begin{proof}
For $E \in \R$, consider the set
$$
UH_E = \big\{ V \in C^0(X,\R); (f,A_{E,V}) \text{ is uniformly
hyperbolic} \big\}.
$$
By Theorems~\ref{t.uh is dense} and \ref{t.P}, $UH_E$ is (open
and) dense. Thus, we may choose a countable dense subset $\{E_n\}$
of $\R$ and then use \eqref{jlresult} to conclude that for $V \in
\bigcap_n UH_{E_n}$, the set $\R \setminus \Sigma$ is dense.
\end{proof}

Let us discuss this result in the two particular cases of
interest, translations and skew-shifts on the torus.

If the base dynamics is given by a translation on the torus, that
is, for quasi-periodic Schr\"odinger operators, Cantor spectrum is
widely expected to occur generically. Corollary~\ref{c.cantor}
proves this statement in the $C^0$-topology. There are other
related results that also establish a genericity statement of this
kind. Cong and Fabbri~\cite{CF} consider bounded measurable
potentials $V$. Fabbri, Johnson, and Pavani studied quasi-periodic
Schr\"odinger operators in the continuum case, that is, acting in
$L^2(\R)$. They prove for generic translation vector that Cantor
spectrum is $C^0$-generic; see \cite{FJP}. More recently, generic
Cantor spectrum for almost periodic Schr\"odinger operators in the
continuum was established by Gordon and Jitomirskaya \cite {GJ}.

On the other hand, Corollary~\ref{c.cantor} is rather surprising
in the case of the skew-shift. Though few results are known, it is
often assumed that in many respects the skew-shift behaves
similarly to a Bernoulli shift, and Schr\"odinger operators
associated to Bernoulli shifts never have Cantor spectrum. More
precisely, the following is expected for Schr\"odinger operators
defined by the skew-shift and a sufficiently regular non-constant
potential function $V : \T^2 \to \R$; compare \cite[p.~114]{Bou3}.
The (top) Lyapunov exponent of $(f,A_{E,V})$ is strictly positive
for almost every $E \in \R$, the operator $H_{(x,y)}$ has pure
point spectrum with exponentially decaying eigenfunctions for
almost every $(x,y) \in \T^2$, and the spectrum $\Sigma$ is not a
Cantor set. Some partial affirmative results concerning the first
two statements can be found in \cite{Bou1,Bou2,Bou3,BGS}, whereas
Corollary~\ref{c.cantor} above shows that the third expected
property fails generically in the $C^0$ category.\footnote{A
result of a similar flavor was recently obtained in \cite{BD}; if
$\alpha$ is not badly approximable, then the second expected
property also fails generically in the $C^0$
category.}
It is natural to pose the question of whether our result is an
artifact of weak regularity: can the spectrum of a Schr\"odinger
operator associated to the skew-shift with analytic potential ever
be a Cantor set?

The following result will follow quickly from the results
described above and the standard KAM theorem:

\begin{thm} \label {t.acspectrum}
Assume that $f$ is a Diophantine translation of the $d$-torus.
Then the set of $V \in C^0(X,\R)$ for which the corresponding
Schr\"odinger operators have some absolutely continuous spectrum
is dense.
\end{thm}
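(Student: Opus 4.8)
The plan is to realize, arbitrarily $C^0$-close to an arbitrary potential, a \emph{smooth} potential whose Schr\"odinger cocycle is reducible to a rotation on a set of energies of positive Lebesgue measure; such energies carry absolutely continuous spectrum, and this yields the density statement. So fix $V_0\in C^0(X,\R)$ and $\epsilon>0$, where $X=\T^d$ and $f(x)=x+\alpha$ with $\alpha$ Diophantine. Pick an energy $E_0$ in the (nonempty) spectrum $\Sigma$ of $(H_{x,V_0})$; by \eqref{jlresult} the cocycle $(f,A_{E_0,V_0})$ is not uniformly hyperbolic, and, being a Schr\"odinger cocycle, it is homotopic to a constant, so $A_{E_0,V_0}\in\Ruth$. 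Choose a non-trivial rotation $R_\theta$ with $\theta$ Diophantine relative to $\alpha$ (i.e.\ $\mathrm{dist}(2\theta-k\cdot\alpha,\Z)\ge c\,|k|^{-\tau}$ for all $k\in\Z^d\setminus\{0\}$, for some $c,\tau>0$); such $\theta$ form a full-measure set because $\alpha$ is Diophantine. Since $R_\theta$ is elliptic, $|\tr R_\theta|<2$, so the second part of Theorem~\ref{t.reduce} applies and $(f,A_{E_0,V_0})$ lies in the $C^0$-closure of the $\PSL(2,\R)$-conjugacy class of the constant cocycle $(f,R_\theta)$.

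Next I would upgrade the regularity of the conjugacy and return to Schr\"odinger cocycles. Given $\delta>0$, pick $B\in C^0(X,\PSL(2,\R))$ with $\|B(f\cdot)R_\theta B(\cdot)^{-1}-A_{E_0,V_0}\|_{C^0}<\delta/2$, and approximate $B$ in $C^0$ by a $C^\infty$ map; the resulting cocycle is $C^\infty$, is $C^\infty$-conjugate to $(f,R_\theta)$, and lies within $\delta$ of $A_{E_0,V_0}$ in $C^0$. Thus $A_{E_0,V_0}$ can be $C^0$-approximated by $C^\infty$ $\SL(2,\R)$-cocycles having the property $P$, ``$C^\infty$-conjugate to $(f,R_\theta)$'', which is invariant under $C^\infty$-conjugacy. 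By Theorem~\ref{t.regular} (with $r=\infty$, $s=0$), $A_{E_0,V_0}$ is $C^0$-approximated by $C^\infty$ $S$-valued cocycles with property $P$. Writing such a cocycle as $A_{E_0,V}$, we obtain $V\in C^\infty(X,\R)$ with $\|V-V_0\|_{C^0}<\epsilon$ and a map $B\in C^\infty(X,\PSL(2,\R))$ with $B(f\cdot)A_{E_0,V}(\cdot)B(\cdot)^{-1}\equiv R_\theta$.

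Finally I would feed the conjugated one-parameter family into KAM. Since $A_{E,V}(x)-A_{E_0,V}(x)=(E-E_0)\,\mathrm{diag}(1,0)$, setting $W(x):=B(fx)\,\mathrm{diag}(1,0)\,B(x)^{-1}\in C^\infty$ we have, for $E$ near $E_0$,
\[
B(fx)\,A_{E,V}(x)\,B(x)^{-1}=R_\theta+(E-E_0)\,W(x),
\]
a $C^\infty$ family which is as $C^\infty$-close to the constant elliptic cocycle $R_\theta$ as desired once $|E-E_0|$ is small. Because $\theta$ is Diophantine relative to $\alpha$, the standard KAM theorem (Dinaburg--Sinai, Eliasson) produces a set $\Lambda$ of positive Lebesgue measure near $E_0$ such that for $E\in\Lambda$ this family --- hence also $(f,A_{E,V})$, reducibility being conjugacy-invariant --- is reducible to a constant rotation; for such $E$, all solutions of \eqref{eve} are bounded uniformly in $x$, so none is subordinate. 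By the Gilbert--Pearson subordinacy theory, $\Lambda$ is then contained, up to a null set, in the absolutely continuous spectrum of \emph{every} $H_{x,V}$, which is therefore nonempty. As $V_0$ and $\epsilon$ were arbitrary, this proves the density statement.

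The step I expect to be the main obstacle is the passage from the purely $C^0$ reducibility supplied by Theorem~\ref{t.reduce} to a reducibility whose conjugacy is smooth enough that KAM can treat the $E$-family obtained after conjugation as a genuine smooth perturbation of a constant cocycle; this is exactly why one must interpose a smoothing step together with Theorem~\ref{t.regular}, and why the regularity class has to be matched to the version of the KAM theorem at hand (a fully analytic variant could be arranged in the same manner). The concluding deduction --- that a positive-measure set of reducible energies produces nonempty absolutely continuous spectrum --- is standard.
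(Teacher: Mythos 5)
Your proposal is correct and follows essentially the same route as the paper's proof: perturb the Schr\"odinger cocycle at a spectral energy so that it becomes conjugate to a constant rotation with Diophantine-relative angle (the paper cites Lemma~\ref{l.b}, you cite Theorem~\ref{t.reduce}(b), which is proved from it), smooth the conjugacy, project back to an $S$-valued cocycle via the projection machinery (the paper cites Lemma~\ref{generalschrodinger} directly, you cite Theorem~\ref{t.regular}, which rests on it), and then feed the resulting smooth one-parameter family into KAM. The only substantive presentational difference is that you unfold the KAM endgame yourself --- passing from reducibility on a positive-measure set of energies to nonempty absolutely continuous spectrum --- whereas the paper packages this into Theorem~\ref{ac criterion} and proves it in Appendix~\ref{ap.KAM}, where it also carefully handles the positive-measure argument via the Lipschitz property of the fibered rotation number (a point you pass over by appeal to ``standard KAM'').
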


This should be compared with \cite{AD}, which showed that singular
spectrum is $C^0$-generic in the more general context of ergodic
Schr\"odinger operators.

\section{Proof of the Results for $\SL(2,\R)$-Cocycles}
\label{s.theorems123}

Our goal is to prove Theorems~\ref{t.bounded} and \ref{t.reduce}.

\begin{lemma}\label{l.group dichotomy}
There are two possibilities about the group $\G$:
\begin{enumerate}
\item \emph{(the circle case)} either there is an onto continuous
homomorphism $s: \G \to \T^1$; \item \emph{(the Cantor case)} or $\G$ is a
Cantor set.
\end{enumerate}
In the second alternative, there exist continuous homomorphisms from $\G$
onto finite cyclic groups of arbitrarily large order.
\end{lemma}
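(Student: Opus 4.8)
The plan is to argue via Pontryagin duality, exploiting two features of $\G$ that come for free from the standing hypotheses. First, $\G$ is metrizable, being a continuous image of the compact metric space $X$. Second, $\G$ is monothetic: since $h$ is a continuous surjection with $h\circ f=(\cdot+\alpha)\circ h$ and $f$ is minimal, the translation $g\mapsto g+\alpha$ on $\G$ is minimal, i.e.\ $\{n\alpha:n\in\Z\}$ is dense in $\G$. Consequently the group $\widehat\G$ of continuous characters is discrete, the evaluation $\chi\mapsto\chi(\alpha)$ embeds $\widehat\G$ into $\T^1$ (a character killing $\alpha$ kills the dense set $\{n\alpha\}$, hence vanishes), and $\widehat\G$ is infinite because $\G$ is. I would then split into two cases according to whether $\widehat\G$ contains an element of infinite order.

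If some $\chi_0\in\widehat\G$ has infinite order, I claim $s:=\chi_0$ already works. Its image is a closed subgroup of $\T^1$, hence either a finite cyclic group or all of $\T^1$; it cannot be finite of order $n$, for then $n\chi_0=0$. So $s:\G\to\T^1$ is onto and we are in the circle case.

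Otherwise $\widehat\G$ is a torsion group, and I want to conclude that $\G$ is a Cantor set. Being torsion, $\widehat\G$ is the directed union of its finite subgroups, and dualizing exhibits $\G$ as an inverse limit of finite groups; equivalently, $\G$ is compact and totally disconnected. An infinite compact group has no isolated points (an isolated point would make $\G$ discrete by translation, hence finite), so $\G$ is nonempty, compact, metrizable, perfect and totally disconnected, hence a Cantor set by Brouwer's characterization. For the final assertion: for every open subgroup $U\le\G$ the quotient $\G/U$ is finite and, being a quotient of a monothetic group, cyclic (generated by the image of $\alpha$); and since $\{0\}=\bigcap U$ is not open, every open $U$ strictly contains a smaller open subgroup, so open subgroups of arbitrarily large index exist, and the quotient maps $\G\to\G/U$ are the desired surjections onto finite cyclic groups of unbounded order.

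The only genuinely delicate point is this last claim: it really needs monotheticity. Without a topological generator, a compact metrizable totally disconnected abelian group can fail to admit any cyclic quotient of order exceeding $2$ (for instance $\prod_{n\ge1}\Z/2\Z$), so the argument must use the element $\alpha$ supplied by minimality of $f$. Everything else is a routine application of the structure theory of compact abelian groups.
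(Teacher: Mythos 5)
Your proof is correct and follows essentially the same route as the paper's Appendix~A argument: pass to the Pontryagin dual $\widehat{\G}$, split on whether it contains an element of infinite order, and in the torsion case use minimality of the rotation by $\alpha$ (i.e.\ monotheticity of $\G$) to produce the cyclic quotients. The differences are only in implementation --- you get surjectivity of $\chi_0$ from compactness of its image rather than from the character-extension theorem, you prove $\G$ is a Cantor set via Brouwer's characterization rather than by citation, and you obtain the cyclic quotients from open subgroups of $\G$ together with monotheticity rather than from annihilators of cyclic subgroups of $\widehat{\G}\subset\T^1_d$; these are dual formulations of the same facts, and your treatment is a bit more self-contained.
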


In the lack of an exact reference, a proof of Lemma~\ref{l.group
dichotomy} is given in Appendix~\ref{ap.groups}.

We will first work out the arguments for the more difficult
circle case.
By assumption, $f$ fibers over a translation on $\G$, and hence also
over a translation of the circle.
That translation is minimal, because so is $f$.
Therefore \emph{in the circle case, we can and we do assume that $\G = \T^1$.}

The proofs then go as follows:
In \S\ref{ss.almost invariant} we explain a construction of
almost-invariant sections for skew-products. This is used in
\S\ref{ss.real} to prove Lemma~\ref{l.real}, which says that
functions that are cohomologous to constant are dense in
$C^0(X,\R)$. Using Lemma~\ref{l.real}, the first case of
Theorem~\ref{t.reduce} is easily proven in \S\ref{ss.reduce uh}.
In \S\ref{ss.disk} we establish some lemmas about the action of
$\SL(2,\R)$ on hyperbolic space. The proof of
Theorem~\ref{t.bounded} is given in \S\ref{ss.bounded}. It is
similar to the proof of Lemma~\ref{l.real}, with additional
ingredients: results on Lyapunov exponents from \cite{Bochi} and
\cite{Furman} and the material from \S\ref{ss.disk}. To prove the
second case of Theorem~\ref{t.reduce} in \S\ref{ss.final} we
employ Theorem~\ref{t.bounded} and Lemma~\ref{l.real} again.

In \S\ref{ss.cantor groups} we will discuss the Cantor case,
which is obtained by a simplification of the arguments
(because then no gluing considerations are needed).

\subsection{Almost-Invariant Sections}\label{ss.almost invariant}
\emph{From here until \S\ref{ss.final} we consider only the circle case.}

A continuous \emph{skew-product} over $f$ is a continuous map $F:X
\times Y \to X \times Y$ (where $Y$ is some topological space) of
the form $F(x,y) = (f(x), F_x(y))$. $F$ is called
\emph{invertible} if it is a homeomorphism of $X \times Y$. In
this case, we write $F^n(x,y) = (f^n(x), F^n_x(y))$, for $n\in
\Z$. An \emph{invariant section} for $F$ is a continuous map $x
\mapsto y(x)$ whose graph is $F$-invariant.

\medskip

Let $p_i/q_i$ be the $i$th continued fraction approximation of
$\alpha$.
We recall that $q_i \alpha$ is closer to $0$ than any $n \alpha$ with $1 \le n < q_i$; moreover
the points $q_i \alpha$ alternate sides around $0$.

Let $I_i\subset \T^1$ be the shortest closed interval
containing $0$ and $q_i \alpha$.
Notice that the first $n>0$ for which $I_i + n\alpha$ intersects $I_i$ is $n=q_{i+1}$.
Moreover, $(I_i + n\alpha) \setminus I_i$ coincides (modulo a point) with $I_{i+1}$.
Also notice that $I_{i+1} + q_i \alpha$ is contained in $I_i$.

Let $i$ be fixed.
The above remarks show that the family of intervals
\begin{equation}\label{e.family}
I_i, \ I_i + \alpha, \ldots, I_i + (q_{i+1}-1) \alpha, \
I_{i+1},\  I_{i+1}+\alpha, \ldots, I_{i+1} + (q_i - 1) \alpha
\end{equation}
has the following properties:
\begin{itemize}
\item the union of the intervals is the whole circle;
\item the interiors of the intervals are two-by-two disjoint.
\end{itemize}
(Another way to the obtain the family \eqref{e.family}
is to cut the circle along the points $n\alpha$ with $0 \le n \le q_{i+1} + q_i - 1$.)
We draw the intervals from~\eqref{e.family} from bottom to top as in Figure~\ref{f.tower}.
Then each point is mapped by the $\alpha$-rotation
to the point directly above it, or else to somewhere in the bottom floor $I_i$.
\psfrag{1}[][c]{{\tiny $0$}}                  
\psfrag{2}[][c]{{\tiny $(q_{i+1}+q_i)\alpha$}}
\psfrag{3}[][c]{{\tiny $q_i\alpha$}}
\psfrag{5}[][c]{{\tiny $(q_i-1)\alpha$}}
\psfrag{6}[][c]{{\tiny $-\alpha$}}
\psfrag{7}[][c]{{\tiny $(q_{i+1}+q_i-1)\alpha$}}
\psfrag{a}[][l]{{\tiny $q_i$}}               
\psfrag{b}[][l]{{\tiny $q_{i+1}$}}
\begin{figure}[hbt]
\includegraphics[height=7.5cm]{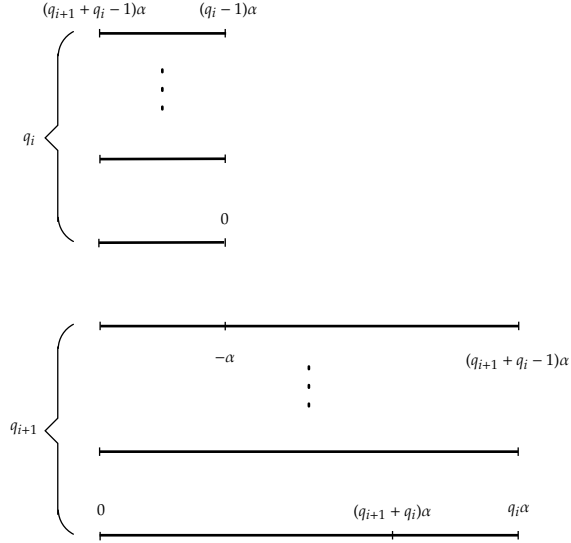}
\caption{Castle with base $I_i$.}
\label{f.tower} 
\end{figure}

\medskip

The following lemma (and its proof) will be used
in several situations (namely, \S\S\ref{ss.real} and \ref{ss.bounded}).

\begin{lemma}\label{l.almost invariant}
Let $F:X \times Y \to X \times Y$ be a continuous invertible
skew-product over $f$. Fix any $i \in \N$, and let $I = I_i$.
Given any map $y_0: h^{-1}(I) \to Y$, there exists a unique map
$y_1: X \to Y$ that extends $y_0$ and such that
\begin{equation}\label{e.almost invariant}
F(x, y_1(x)) = (f(x), y_1(f(x))) \quad \text{for all $x \in X \setminus h^{-1}(I)$.}
\end{equation}
If, in addition, $y_0$ is continuous and satisfies
\begin{equation}\label{e.compatible}
F^n (x, y_0(x)) = (f^n(x), y_0( f^n(x))) \text{ for all $x \in
h^{-1}(0)$, $n \in \{q_i, q_{i+1}+q_i\}$},
\end{equation}
then $y_1$ is continuous.
\end{lemma}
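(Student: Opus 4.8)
The plan is to use the tower structure described above Figure~\ref{f.tower}. Write $J_0 = h^{-1}(I_i)$, and for each interval $I$ in the list \eqref{e.family} set $J = h^{-1}(I) \subset X$. Since the interiors of the intervals in \eqref{e.family} are disjoint and their union is all of $\T^1$, the sets $J$ cover $X$ and overlap only along the fibers $h^{-1}(\text{endpoint})$. For each point $x \in X \setminus J_0$, the sequence $x, f^{-1}(x), f^{-2}(x), \dots$ eventually hits $J_0$ (this is exactly the statement that every floor above the base is reached from the base by iterating the $\alpha$-rotation upward, read backwards); let $k(x) \geq 1$ be the first return time into $J_0$ under backward iteration, so $f^{-k(x)}(x) \in J_0$ and $f^{-j}(x) \notin J_0$ for $0 < j < k(x)$. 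Then the only possible extension is forced: define $y_1(x) = F^{k(x)}_{f^{-k(x)}(x)}\!\big(y_0(f^{-k(x)}(x))\big)$, and $y_1 = y_0$ on $J_0$. By construction $F(x, y_1(x)) = (f(x), y_1(f(x)))$ whenever $x \notin J_0$, because moving one step up a column of the tower increases the return time by one; and uniqueness is immediate since \eqref{e.almost invariant} propagates the value of $y_1$ up each column from the base. This handles the first, purely set-theoretic, assertion.

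For continuity, first I would observe that on the \emph{interior} of each set $h^{-1}(\interior I)$ the function $k(x)$ is locally constant and $y_1$ is a composition of continuous maps ($y_0$, $f$, and $F_x$), hence continuous there; so $y_1$ is automatically continuous at every point of $X$ whose $h$-image is not one of the cutting points $n\alpha$, $0 \le n \le q_{i+1}+q_i-1$. The only issue is matching across the boundary fibers. The key observation is that the towers in \eqref{e.family} are glued along very few identifications: the top of the $I_i$-column (floor $q_{i+1}$) sits over the top of the $I_{i+1}$-column plus the interval $I_{i+1} + q_i\alpha \subset I_i$, and the combinatorics from the discussion of \eqref{e.family} show that the \emph{only} genuine gluing constraints, after one has a continuous $y_0$ on $J_0$, are precisely the two that say: following the column of height $q_i$ from the base and following the column of height $q_{i+1}$ from the base must land consistently. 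This is exactly what hypothesis \eqref{e.compatible} encodes at the single orbit point $h^{-1}(0)$, the common corner of the two columns; by continuity of $y_0$ and of the $F$-iterates, the matching that holds at $h^{-1}(0)$ holds on a neighborhood, which is enough to glue $y_1$ continuously along all boundary fibers.

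More carefully, the argument I would run is: cover $X$ by finitely many closed "column" pieces $C_I = \bigcup_{j} f^j(J_0 \cap \text{appropriate sub-base})$ corresponding to the tower over $J_0$, on each of which $y_1$ is explicitly a finite composition of continuous maps, and then check that on each overlap $C_I \cap C_{I'}$ — which lies over a cutting point — the two formulas for $y_1$ agree. There are essentially two types of overlaps: (i) passing from one floor to the next within the same tower, where agreement is automatic from \eqref{e.almost invariant}; and (ii) the "wrap-around" overlaps where a top floor meets the base $J_0$, which is where \eqref{e.compatible} is used — the two relevant wrap-arounds have return times $q_i$ and $q_{i+1}+q_i$, matching exactly the set $\{q_i, q_{i+1}+q_i\}$ in \eqref{e.compatible}. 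Then the pasting lemma for continuous functions on a finite closed cover finishes the proof.

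The main obstacle I expect is purely bookkeeping: correctly identifying which boundary fibers require a compatibility condition and checking that the two conditions in \eqref{e.compatible} — no more, no fewer — suffice. The structure of the intervals \eqref{e.family} (in particular the facts that $I_{i+1}$ is the part of $I_i + q_{i+1}\alpha$ sticking out of $I_i$, and that $I_{i+1}+q_i\alpha \subset I_i$) is exactly tailored so that the circle, cut along $\{n\alpha : 0 \le n \le q_{i+1}+q_i-1\}$, becomes these two stacked columns glued only at the orbit of $0$; once that picture is firmly in hand, the continuity check reduces to the two stated conditions and the rest is routine.
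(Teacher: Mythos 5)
Your formula for $y_1$ propagates in the wrong direction, and this is a genuine error, not a matter of presentation. You define $y_1(x)=F^{k(x)}_{f^{-k(x)}(x)}\big(y_0(f^{-k(x)}(x))\big)$ where $k(x)$ is the first \emph{backward} hitting time of $J_0 = h^{-1}(I)$; that is, you locate the last visit to the base before $x$ and push $y_0$ forward. But the defining relation \eqref{e.almost invariant} is only imposed for $x\notin h^{-1}(I)$, so the one step from a base point $z\in h^{-1}(I)$ to $f(z)$ is \emph{not} governed by the relation; you are not entitled to set $y_1(f(z))=F_z(y_0(z))$. What the relation does give, for $x\notin h^{-1}(I)$, is $y_1(x)=F_x^{-1}\big(y_1(f(x))\big)$, and iterating this forward until the orbit first hits $h^{-1}(I)$ yields the forced formula $y_1(x)=\big(F^{\tau(x)}_x\big)^{-1}\big(y_0(f^{\tau(x)}(x))\big)$ with $\tau(x)=\min\{n\ge 0: f^n(x)\in h^{-1}(I)\}$ — pull back from the first \emph{forward} hit, not push forward from the last backward hit. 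Your definition fails \eqref{e.almost invariant} precisely at the top floor of each column: if $x$ is at floor $N-1$ over a base point $x_0$, then $F_x(y_1(x))=F^N_{x_0}(y_0(x_0))$, and this equals $y_0(f^N(x_0))=y_1(f(x))$ only if $y_0$ is already compatible with the first-return map to $h^{-1}(I)$, which is not assumed (and indeed the whole point of the first assertion is that $y_0$ is arbitrary). Correspondingly, your uniqueness claim ("\eqref{e.almost invariant} propagates the value of $y_1$ up each column from the base") has the propagation direction reversed: it propagates \emph{down} each column from the top.

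Once the formula is corrected, your continuity plan is close to the paper's in spirit — identify where the return time jumps and check that \eqref{e.compatible} forces the two formulas at a jump to coincide — but it is less tight, and the enumeration of overlaps needs to be redone with $\tau$ rather than $k$. The paper's argument is short: away from discontinuities of $\tau$ the formula is locally a fixed composition of continuous maps; at a discontinuity point $x$ one has $f^{\tau(x)}(x)\in h^{-1}(\{0\})$ or $f^{\tau(x)}(x)\in h^{-1}(\{q_i\alpha\})$, the only two possible limits of $\tau$ along sequences $x_j\to x$ are $\tau(x)$ and $\tau(x)+\ell$ with $\ell=q_i$ or $\ell=q_{i+1}$ respectively, and the hypothesis \eqref{e.compatible} (applied with $n=q_i$ in the first case, and with both $n=q_i$ and $n=q_{i+1}+q_i$ in the second, transported by $f^{q_i}$) shows the two resulting expressions for $y_1(x)$ agree. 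I suggest reworking your write-up around the correct formula and this discontinuity-of-$\tau$ argument rather than the pasting-over-columns bookkeeping.
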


\begin{proof}
For each $x \in X$, let
\begin{equation}\label{e.def tau}
\tau (x) = \min \, \{n \ge 0; \; f^n(x) \in h^{-1}(I) \} \, .
\end{equation}
Given $y_0: h^{-1}(I) \to Y$, then $y_1$ must be given by
\begin{equation}\label{e.defn y1}
y_1(x) = \left(F^{\tau(x)}_x\right)^{-1} \left( y_0(f^{\tau(x)}(x)) \right) \, .
\end{equation}

Now assume that $y_0$ is continuous and \eqref{e.compatible} holds.
We only need to check that $y$ is
continuous at each point $x$ where $\tau$ is not.
Fix such an $x$ and let $k = \tau(x)$.
Then either (i) $f^k(x) \in h^{-1}(0)$
or (ii) $f^k(x) \in h^{-1}(q_i \alpha)$.
Let $\ell = \tau(f^k(x))$, that is,
$\ell = q_i$ in case (i), and $\ell = q_{i+1}$ in case (ii).
Due to the definition of $y_0$, we have
$F^{\ell}_{f^k(x)} \big(y_0(f^k(x))\big) = y_0(f^{k+ \ell}(x))$
in both cases.
Therefore
$$
\left( F^{k}_{x} \right)^{-1} \left( y_0(f^{k}(x)) \right) =
\left( F^{k+\ell}_{x} \right)^{-1} \left( y_0(f^{k+\ell}(x)) \right).
$$
The set of the possible limits of $\tau(x_j)$, where $x_j \to x$,
is precisely $\{k, k+\ell\}$.
It follows that $y$ is continuous at $x$.
\end{proof}

\subsection{The Cohomological Equation} \label{ss.real}

\begin{lemma}\label{l.real}
For every $\phi \in C^0(X,\R)$ and every $\delta>0$, there exists
$\tilde \phi \in C^0(X,\R)$ such that $\|\phi-\tilde\phi\|_{C^0} <
\delta$ and $\tilde \phi$ is $C^0$ cohomologous to a constant:
there exist $w \in C^0(X,\R)$ and $a_0 \in \R$ such that
$$
\tilde \phi = w \circ f - w + a_0 \, .
$$
\end{lemma}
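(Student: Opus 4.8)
The plan is to subtract off the mean of $\phi$ and then realize the remainder as an \emph{exact} coboundary after a small $C^0$-perturbation, using the almost-invariant section machinery of Lemma~\ref{l.almost invariant}. Let $\mu$ be the unique $f$-invariant Borel probability measure, set $a_0 := \int_X \phi \, d\mu$ and $\psi := \phi - a_0$, so $\int \psi\, d\mu = 0$. Since any function of the form $(w\circ f - w) + a_0$ is cohomologous to the constant $a_0$ (with that same $w$), it suffices to produce $w \in C^0(X,\R)$ with $\|\psi - (w\circ f - w)\|_{C^0} < \delta$; then $\tilde\phi := (w\circ f - w) + a_0$ works. The only analytic input needed is unique ergodicity of $f$: it guarantees that $\tfrac1n S_n\psi \to 0$ uniformly on $X$, where $S_n\psi := \sum_{j=0}^{n-1} \psi\circ f^j$. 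Hence I may fix $i \in \N$ as large as I wish so that $\sup_{x\in X}|S_n\psi(x)| \le \delta'\, n$ for every $n \ge q_i$, where $\delta' \ll \delta$ is to be determined.

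Next, with $I := I_i$, consider the castle over the base floor $h^{-1}(I)$ as in \S\ref{ss.almost invariant}: it has two columns, and the first-return map $R := f^{\tau^+}\colon h^{-1}(I)\to h^{-1}(I)$ has return time $\tau^+$ taking the two values $q_{i+1}$ and $q_{i+1}+q_i$. I would apply Lemma~\ref{l.almost invariant} with $Y=\R$ and the skew-product $F(x,t) = (f(x),\, t+\psi(x))$, for a suitable continuous $y_0\colon h^{-1}(I)\to\R$. The output $w := y_1$ then satisfies $w\circ f - w = \psi$ \emph{exactly} on $X\setminus h^{-1}(I)$, so that $\tilde\psi := w\circ f - w$ is genuinely a coboundary and agrees with $\psi$ off the thin set $h^{-1}(I)$; setting $\tilde\phi := \tilde\psi + a_0$ finishes the proof provided $\|\tilde\psi-\psi\|_{C^0}<\delta$. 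Substituting the formula \eqref{e.defn y1} for $y_1$ into $w\circ f - w$ gives, for $x\in h^{-1}(I)$,
$$
\tilde\psi(x) - \psi(x) \;=\; y_0\bigl(R(x)\bigr) - y_0(x) - S_{\tau^+(x)}\psi(x) ,
$$
so the entire problem reduces to choosing $y_0$ so that (i) $y_0$ is continuous and satisfies the compatibility condition \eqref{e.compatible} — which is exactly what makes $w$ continuous — and (ii) $y_0\circ R - y_0$ approximates the induced function $S_{\tau^+}\psi$ to within $\delta$ in $C^0(h^{-1}(I))$.

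Requirement (ii) is a statement about approximately solving the cohomological equation on the induced system $(R,h^{-1}(I))$ for $S_{\tau^+}\psi$, which has zero mean for the induced measure (Abramov/Kac); but no abstract solvability theorem is needed, since one can build the approximate solution by hand. The discrepancy that an exact solution would have to absorb over one column is precisely a Birkhoff sum $S_{\tau^+}\psi$, which by the choice of $i$ has absolute value $\le\delta'\,\tau^+$; \emph{spreading this discrepancy linearly along the (long) column} — equivalently, feeding a suitable small correction term, constant along each column, into the skew-product $F$ — yields a $y_0$ whose associated defect is of order (Birkhoff sum)$/$(column height) $\le\delta' < \delta$, uniformly. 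The genuine obstacle, and the only delicate point, is to perform this spreading so that $y_0$ \emph{simultaneously} stays continuous and honors \eqref{e.compatible} across the boundary fibers $h^{-1}(0)$ and $h^{-1}(q_i\alpha)$, i.e.\ to glue the two columns together without reintroducing a jump at the seam; since there are only two columns and two gluing conditions, and since all the corrections involved are already controlled by $\delta'$, this is a careful but elementary interpolation. (As a sanity check, there is also a soft non-constructive proof: every $f$-invariant signed measure is a scalar multiple of $\mu$, so the $C^0$-closure of $\{w\circ f - w : w\in C^0(X,\R)\}$ equals $\{g : \int g\,d\mu = 0\}\ni\psi$. The explicit castle construction is what is needed for the later, harder Theorem~\ref{t.bounded}.)
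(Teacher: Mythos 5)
The constructive half of your proposal has a genuine gap. After applying Lemma~\ref{l.almost invariant} to $F(x,t)=(f(x),\,t+\psi(x))$, you set $w := y_1$ and $\tilde\psi := w\circ f - w$, and you correctly compute that $\tilde\psi = \psi$ off $h^{-1}(I)$ with defect $\tilde\psi(x)-\psi(x) = y_0(R(x))-y_0(x)-S_{\tau^+(x)}\psi(x)$ on $h^{-1}(I)$. But this defect is \emph{not} small: the Birkhoff sum $S_{\tau^+(x)}\psi(x)$ is only controlled by $\delta'\tau^+(x)\approx\delta' q_{i+1}$, so forcing the defect below $\delta$ requires $y_0\circ R - y_0$ to approximate $S_{\tau^+}\psi$ on the induced system $(R,h^{-1}(I))$ --- which is the same flavor of cohomological problem you started with, on a less pleasant space, and is not an ``elementary interpolation''. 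Your proposed fix (``spreading the discrepancy linearly along the long column'') cannot be implemented within your framing, because $y_0$ lives only on $h^{-1}(I)$; spreading means changing $\psi$ on every floor of the tall column, and once you do that, the identity $\tilde\psi=\psi$ off $h^{-1}(I)$ fails and your defect formula no longer applies. This is precisely the content of the paper's Step~2: it modifies $\tilde\phi$ on the whole tower $\bigsqcup_{n=0}^{q_{i+1}-1} f^n(h^{-1}(I))$ by the constant correction $\frac{1}{q_{i+1}}\bigl[w_1(f^{q_{i+1}}(x))-w_1(x)-S_{q_{i+1}}(x)\bigr]$ on each floor, and then re-defines $w$ along the column by partial Birkhoff sums of the modified $\tilde\phi$ (it cannot keep the $w_1$ from Lemma~\ref{l.almost invariant}, since that one is almost-invariant for the unmodified $\phi$). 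The per-floor correction is $O(\delta)$ because the total mismatch is divided by the column height $q_{i+1}$. Your write-up never makes this structural change, so the argument as stated does not close.

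On the other hand, your parenthetical ``soft non-constructive proof'' is in fact a complete, correct, and shorter proof of the lemma, and it is a genuine alternative to the paper's route. Unique ergodicity together with the Jordan decomposition shows that every $f$-invariant finite signed Borel measure is a scalar multiple of $\mu$; since the annihilator in $C^0(X,\R)^*$ of the coboundary subspace $\{w\circ f - w : w\in C^0(X,\R)\}$ consists exactly of the $f$-invariant signed measures, Hahn--Banach gives that the $C^0$-closure of the coboundaries is $\{g:\int g\,d\mu=0\}$, which contains $\psi=\phi-a_0$. This uses only unique ergodicity (not the almost-periodic factor), so it is actually more general than the paper's argument. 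What it does not give --- as you rightly observe --- is the explicit tower machinery, which has no soft substitute in the nonlinear, $\D$-valued setting of Theorem~\ref{t.bounded}. If the constructive part of your write-up is meant as a warm-up for that theorem, it needs to be reorganized so that the correction is distributed over the floors of the column rather than concentrated on the base.
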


\begin{rem}
In the case $X=\T^1$, there is a quick proof of
Lemma~\ref{l.real}: Approximate $\phi$ by a (real) trigonometric
polynomial $\tilde \phi(z) = \sum_{|n|\le m} a_n z^n$, and let
$w(z) = \sum_{0<|n|\le m} (e^{2\pi i n \alpha} - 1)^{-1} a_n z^n$.
\end{rem}

The following proof contains a construction that will
appear again in the (harder) proof of Theorem~\ref{t.bounded},
so it may also be useful as a warm-up.

\begin{proof}[Proof of Lemma~\ref{l.real}]
Fix $\phi \in C^0(X,\R)$ and $\delta>0$ small.
Let $a_0$ be the integral of $\phi$ with respect to the unique $f$-invariant probability measure.
Without loss of generality, assume $a_0=0$.
Write $S_n = \sum_{j=0}^{n-1} \phi \circ f^j$.
Let $n_0$ be such that $\left| S_n / n \right| < \delta$
uniformly for every $n \ge n_0$.

Choose and fix $i$ such that
$$
q_i > \max \left(n_0, \delta^{-1} \|\phi\|_{C^0}\right).
$$
Let $I = I_i$. The rest of the proof is divided into three steps:

\medskip
\noindent \textit{Step 1: Finding an almost-invariant section $w_1: X \to \R$.}
First define a real function $w_0$ on $h^{-1}(\{0, q_i\alpha, (q_{i+1}+q_i)\alpha\})$ by
$$
w_0(f^n (x)) = S_n (x)
\text{ for $x \in h^{-1}(0)$ and $n=0$, $q_i$, or $q_{i+1}+q_i$.}
$$
Using Tietze's Extension Theorem, we extend continuously $w_0$ to
$h^{-1}(I)$ so that
$$
\sup_{h^{-1}(I)} |w_0|=
\sup_{h^{-1} (\{0, q_i\alpha, (q_{i+1}+q_i)\alpha\})} |w_0| .
$$
Now we consider the skew-product
$$
F : X \times \R \to X \times \R , \; F(x,w) = (f(x), w + \phi(x)).
$$
Applying Lemma~\ref{l.almost invariant} to $F$ and $w_0$, we find
a continuous function $w_1:X \to \R$ which extends $w_0$ and such
that $w_1(f(x)) = w_1(x) + \phi(x)$ if $x \not\in h^{-1}(\interior
I)$.

\medskip
\noindent \textit{Step 2: Definition of functions $\tilde \phi$, $w: X \to \R$.}
Define $\tilde \phi$ by $\tilde \phi = \phi$ outside of
$\bigsqcup_{n=0}^{q_{i+1}-1} f^n(h^{-1}(I))$, and
$$
\tilde\phi (f^n(x)) = \phi(f^n(x)) +
\frac{w_1(f^{q_{i+1}}(x)) - w_1(x) - S_{q_{i+1}}(x)}{q_{i+1}} \quad
\text{if $x\in h^{-1}(I)$, $0 \le n < q_{i+1}$.}
$$
Define $w$ by $w=w_1$ outside of $\bigsqcup_{n=1}^{q_{i+1}-1} f^n(h^{-1}(I))$, and
$$
w (f^n (x)) = w(x) + \sum_{j=0}^{n-1}\tilde\phi(f^j(x))
\quad \text{if $x\in h^{-1}(I)$, $0 \le n < q_{i+1}$.}
$$
Then $\tilde\phi$ and $w$ are continuous functions satisfying
$\tilde \phi = w \circ f - w$.

\medskip
\noindent \textit{Step 3: Distance estimate.}
From now on, let $x \in h^{-1}(I)$ be fixed.
Due to the definition of $w_0$ we have
$$
|w_0(x)| \le (q_{i+1} + q_i)\delta.
$$
Recalling~\eqref{e.def tau}, we see that $\tau (f^{q_{i+1}}(x))$
equals either $1$ or $q_i+1$ (see Figure~\ref{f.tower}). In any case,
$|S_{\tau(x)}(x)| \le (q_i+1) \delta$ and therefore,
by~\eqref{e.defn y1},
$$
|w_1(f^{q_{i+1}}(x))| \le |w_0(f^{\tau(x)+q_{i+1}}(x))| + |S_{\tau(x)}(x)|
\le (q_{i+1} + 2q_i + 1)\delta.
$$
Hence
\begin{align*}
\left| \frac{w_1(f^{q_{i+1}}(x)) - w_1(x)}{q_{i+1}} \right| \le
\frac{(3q_{i+1}+3q_i+1)\delta}{q_{i+1}} < 7\delta.
\end{align*}
That is, the $C^0$ distance between $\tilde\phi$ and $\phi$ is $<7\delta$.
\end{proof}

\subsection{Denseness of Reducibility in the Uniformly Hyperbolic Case}
\label{ss.reduce uh}

First, let us note a basic fact:
\begin{lemma}\label{l.uh is ruth}
For any homeomorphism $f:X \to X$, if $(f,A)$ is uniformly hyperbolic, then $A \in \Ruth$.
\end{lemma}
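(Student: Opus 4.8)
\textbf{Proof proposal for Lemma~\ref{l.uh is ruth}.}

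The plan is to show that a uniformly hyperbolic cocycle lies in the same homotopy class as a reducible one by exhibiting an explicit homotopy connecting $A$ to a constant hyperbolic matrix, using the hyperbolic splitting. First I would recall that uniform hyperbolicity gives continuous invariant line fields: there exist continuous maps $x \mapsto E^s(x), E^u(x)$ into $\P^1$ with $E^s(x) \oplus E^u(x) = \R^2$, $A(x) E^s(x) = E^s(f(x))$, $A(x) E^u(x) = E^u(f(x))$, and $A(x)$ contracting on $E^s$, expanding on $E^u$ (this is the ``hyperbolic splitting condition'' referenced via \cite{Yoccoz}). The idea is that, up to $\PSL(2,\R)$-conjugacy, the splitting can be straightened: but since the bundle $X \times \R^2$ is trivial and the two line fields are transverse, they determine a continuous frame only up to a possible orientation/determinant issue, so one works with $\PSL(2,\R)$ rather than insisting on a genuine $\SL(2,\R)$-conjugacy. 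Concretely, I would choose continuous unit vectors $v^s(x) \in E^s(x)$ and $v^u(x) \in E^u(x)$ locally, form $B(x) \in \mathrm{GL}(2,\R)$ with these columns, normalize the determinant, and pass to $\PSL(2,\R)$; then $\tilde A(x) = B(f(x)) A(x) B(x)^{-1}$ is diagonal (in $\PSL(2,\R)$), of the form $\mathrm{diag}(e^{\mu(x)}, e^{-\mu(x)})$ for a continuous function $\mu$ with positive Birkhoff averages.

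Next I would reduce the diagonal cocycle $\mathrm{diag}(e^{\mu(x)}, e^{-\mu(x)})$ to a constant, \emph{up to homotopy}, which is all that is needed: the path $t \mapsto \mathrm{diag}(e^{t\mu(x) + (1-t)c}, e^{-t\mu(x) - (1-t)c})$, where $c = \int \mu \, d\nu > 0$ is the integral against the invariant measure (or simply any positive constant), is a homotopy in $C^0(X,\SL(2,\R))$ from $\tilde A$ to the constant cocycle $\mathrm{diag}(e^c, e^{-c})$, which is manifestly reducible (indeed constant). Hence $\tilde A$ is homotopic to a reducible cocycle, so $\tilde A \in \Ruth$. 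Finally, $A$ and $\tilde A$ differ by the conjugacy $B$; I would argue that conjugate cocycles are homotopic — either because $B \in C^0(X,\SL(2,\R))$ and $\SL(2,\R)$ is connected so $B$ is homotopic to the constant identity (giving a homotopy from $x \mapsto B(f(x))A(x)B(x)^{-1}$ to $A$), or, in the $\PSL(2,\R)$ case, noting that $\PSL(2,\R)$ is connected as well. Thus $A$ is homotopic to $\tilde A$, which is reducible up to homotopy, so $A \in \Ruth$.

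The one subtlety — and the step I expect to need the most care — is the construction of the conjugacy $B$: the invariant line fields $E^s, E^u$ need not admit globally continuous nonvanishing \emph{vector} sections (a line field over $X$ can be nonorientable), so $B$ genuinely lives in $\PSL(2,\R)$ rather than $\SL(2,\R)$, and one must check that the reduced cocycle $\tilde A$ is well-defined and continuous as a $\PSL(2,\R)$-valued map, and that being reducible-up-to-homotopy only requires the underlying $\SL(2,\R)$ homotopy class of $A$ itself (which is unaffected by $\PSL(2,\R)$-conjugacy since one can lift: given $B \in C^0(X,\PSL(2,\R))$, the cocycle $B(f(\cdot))A(\cdot)B(\cdot)^{-1}$ has a canonical $\SL(2,\R)$-lift, and passing from $A$ to this lift is through cocycles whose traces never vanish on the homotopy — or more directly, one observes $\PSL(2,\R)$-conjugacy preserves $\Ruth$ by its very definition together with connectedness). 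Everything else is routine: the existence and continuity of the splitting is standard, and the homotopies written above are explicit.
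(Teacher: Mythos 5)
The proposal reproduces the paper's construction of the $\PSL(2,\R)$-valued conjugacy $B$ from the invariant splitting, correctly observes that the conjugated cocycle $D(x)=B(f(x))A(x)B(x)^{-1}$ is diagonal (hence valued in a contractible subgroup), and correctly homotopes $D$ to a constant. The error is in the very last step, where you pass from ``$D\in\Ruth$'' back to ``$A\in\Ruth$.''

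You assert that conjugate cocycles are homotopic because $B$ is homotopic to the constant identity, which you in turn justify by ``$\SL(2,\R)$ is connected'' (resp.\ ``$\PSL(2,\R)$ is connected''). That inference is false: connectedness of a target space $Y$ only means that any two \emph{constant} maps into $Y$ are homotopic; it does not make arbitrary maps $X\to Y$ null-homotopic. Here $\SL(2,\R)$ and $\PSL(2,\R)$ are homotopy equivalent to a circle, with $\pi_1\cong\Z$, so a map $B:X\to\PSL(2,\R)$ is null-homotopic only if its ``degree'' class vanishes, which is not automatic. The purported consequence---that conjugate cocycles are homotopic---is also false in general: the paper's own Remark~\ref{rem.example} exhibits a cocycle that is $\PSL(2,\R)$-conjugate to a constant yet not homotopic to a constant, and the statement can even fail for $\SL(2,\R)$-conjugacy when $f$ is not homotopic to the identity, because the homotopy class of $B(f(\cdot))A(\cdot)B(\cdot)^{-1}$ differs from that of $A$ by $(f^*-\id)[B]$. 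Your parenthetical fallback that ``$\PSL(2,\R)$-conjugacy preserves $\Ruth$'' is actually true, but it is not ``by its very definition together with connectedness''; it needs a lifting argument (conjugate a homotopy realizing $A\simeq A'$ by $B$ in $\PSL(2,\R)$, then lift the resulting $\PSL(2,\R)$-homotopy back to $\SL(2,\R)$ starting from $A$ via the homotopy lifting property of the double cover), and neither ``canonical $\SL(2,\R)$-lift'' nor the trace remark makes sense as stated.

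The clean fix---and the paper's actual route---is not to homotope $D$ in isolation and then try to transport the result back through $B$, but to homotope $A$ itself: write $A(x)=B(f(x))D(x)B(x)^{-1}$, shrink $D$ to a constant $D_*$ through diagonal matrices, and consider $A_t(x):=B(f(x))D_t(x)B(x)^{-1}$ in $\PSL(2,\R)$, lifted to $\SL(2,\R)$ with $A_0=A$. Then $A_1$ is $\PSL(2,\R)$-conjugate to the constant $D_*$ (via the same $B$), hence reducible, and $A\simeq A_1$ in $C^0(X,\SL(2,\R))$, giving $A\in\Ruth$ without any claim that $B$ is null-homotopic. Everything before your last step is fine and matches the paper; replace the last step with this direct homotopy (or with a correctly argued $\PSL(2,\R)$-invariance of $\Ruth$), and the proof is complete.
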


\begin{proof}
By uniform hyperbolicity, for each $x\in X$
there exists a splitting $\R^2= E^u(x) \oplus E^s(x)$, which depends continuously on $x$
and is left invariant by the cocycle, that is,
$A(x) \cdot E^{u,s}(x) = E^{u,s}(f(x))$.

Let $\{e_1,e_2\}$ be the canonical basis of $\R^2$.
For each $x \in X$, let $e^u(x) \in E^u(x)$ and $e^s(x) \in E^s(x)$
be unit vectors so that $\{e^u(x), e^s(x)\}$ is a positively oriented basis.
Define a matrix $B(x)$ putting $B(x) \cdot e_1 = ce^u(x)$ and $B(x) \cdot e_2 = ce^s(x)$,
where $c = [\sin \sphericalangle(e^u(x), e^s(x))]^{-1/2}$ is chosen so that $\det B(x)=1$.
Then $B(x)$ is uniquely defined as an element of $\PSL(2,\R)$, and depends continuously on $x$.

Let $D(x)$ be given by $A(x) = B(f(x)) D(x) B(x)^{-1}$.
Then $D(x)$ is a diagonal ``matrix''.
Therefore $D:X \to \PSL(2,\R)$ is homotopic to a constant
and $A$ is homotopic to a reducible cocycle.
\end{proof}

\begin{proof}[Proof of the first part of Theorem~\ref{t.reduce}]
Let us write, for $t \in \R$,
$
D_t = \pm \begin{pmatrix}
e^t & 0 \\ 0 & e^{-t}
\end{pmatrix} \in \PSL(2,\R).
$ By the proof of Lemma~\ref{l.uh is ruth}, there exist $B \in
C^0(X, \PSL(2,\R))$  and $\phi \in C^0(X,\R)$ such that $A(x) =
B(f(x)) D_{\phi(x)} B(x)^{-1}$. By Lemma~\ref{l.real}, we can
perturb $\phi$ (and hence $A$) in the $C^0$-topology so that $\phi
= w \circ f - w + a_0$ for some $w\in C^0(X,\R)$ and $a_0 \in \R$.
We can assume $a_0 \neq 0$. Then $\hat{B}(x) = B(x) D_{w(x)}$ is a
conjugacy between $A$ and the constant $D_{a_0}$.
\end{proof}

\subsection{Disk Adjustment Lemma} \label{ss.disk}
The aim here is to establish Lemma~\ref{l.adjust} below, that will be used in the proof of Theorem~\ref{t.bounded}.
First we need to recall some facts about hyperbolic geometry.

The group $\SL(2,\R)$ acts on the upper half-plane
$\H =\{w\in \C;\; {\mathrm{Im}\, w>0}\}$
as follows:
$$
A = \begin{pmatrix} a & b \\ c & d \end{pmatrix} \in \SL(2,\R) \ \Rightarrow \
A \cdot w = \frac{aw+b}{cw+d} \, .
$$
(In fact, the action factors through $\PSL(2,\R)$.)
We endow the half-plane with the Riemannian metric (of curvature $-1$)
$$
v \in T_w \H \ \Rightarrow \  \|v\|_w = \frac{|v|}{\mathrm{Im} \, w} \, .
$$
Then $\SL(2,\R)$ acts on $\H$ by isometries.

We fix the following conformal equivalence between $\H$ and the unit disk
$\D = \{z \in \C ; \; |z|<1\}$:
$$
w = \frac{-iz-i}{z-1} \in \H \ \leftrightarrow \ z =
\frac{w-i}{w+i} \in \D \, .
$$
We take on the disk the Riemannian metric that makes the map above an isometry, namely
$\|v\|_z = 2(1-|z|^2)^{-2} |v|$.
By conjugating, we get an action of $\SL(2,\R)$
on $\D$ by isometries, that we also denote as $(A,z) \mapsto A\cdot z$.

Let $d(\mathord{\cdot}, \mathord{\cdot})$ denote the distance function
induced on $\D$ by the Riemannian metric.
We claim that:
\begin{equation}\label{e.norm}
\|A\| = e^{d(A \cdot 0,0)/2} \quad \text{for all $A \in \SL(2,\R)$.}
\end{equation}

\begin{proof}
It is sufficient to prove the corresponding fact $\|A\| = e^{d(A \cdot i,i)/2}$ on the half-plane $\H$.
We first check the case where $A$ is a diagonal matrix
$H_\lambda = \begin{pmatrix} \lambda & 0 \\ 0 & \lambda^{-1} \end{pmatrix}$
with $\lambda >1$:
$$
d(A \cdot i , i) = d (\lambda^2 i, i) = \int_1^{\lambda^2} \frac{dy}{y} = 2 \log \lambda = 2 \log \|A\| \, .
$$
Next, if $A$ is a rotation $R_\theta$ then its action on $\H$ fixes the point $i$, so the claim also holds.
Finally, a general matrix can be written as $A = R_\beta H_\lambda R_\alpha$,
so~\eqref{e.norm} follows.
\end{proof}


We now prove two lemmas.

\begin{lemma}\label{l.first adjust}
There exists a continuous map $\Phi:\D \times \D \to \SL(2,\R)$ such that
$\Phi(p_1,p_2) \cdot p_1 = p_2$ and
$\|\Phi(p_1,p_2)- \Id\| \leq e^{d(p_1,p_2)/2}-1$.
\end{lemma}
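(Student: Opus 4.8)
I would look for $\Phi(p_1,p_2)$ inside the very restricted class of hyperbolic transvections \emph{whose axis passes through the center $0$ of the disk} — equivalently, symmetric positive‑definite matrices in $\SL(2,\R)$ — because for such matrices the operator norm and the displacement of $0$ are completely transparent, and this is exactly what makes the sharp bound $e^{d/2}-1$ come out. For $q\in\D$, let $M_q$ be \emph{the} symmetric positive‑definite element of $\SL(2,\R)$ with $M_q\cdot 0=q$; it exists and is unique because, by the spectral theorem, a symmetric positive‑definite $A$ is $R_\theta H_\mu R_\theta^{-1}$ with $R_\theta\in\SO(2)$ and $H_\mu=\mathrm{diag}(\mu,\mu^{-1})$, $\mu\ge 1$, and since $\SO(2)$ acts on $\D$ fixing $0$ while $H_\mu$ is a transvection through $0$, the map $(\theta,\mu)\mapsto A\cdot 0$ is a bijection onto $\D$. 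Two facts I would record: by \eqref{e.norm}, $\|M_q\|=e^{d(0,q)/2}$; and $M_q-\Id$ is symmetric with eigenvalues $e^{d(0,q)/2}-1>0$ and $e^{-d(0,q)/2}-1$, the former dominating in absolute value by the arithmetic–geometric mean inequality, so $\|M_q-\Id\|=e^{d(0,q)/2}-1$. Moreover $M_q$ (for $q\neq 0$) is a hyperbolic isometry with axis through $0$ and translation length exactly $d(0,q)$.

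\textbf{Building $\Phi$.} Fix $p_1$ and study the self‑map $q\mapsto M_q\cdot p_1$ of $\D$. After conjugating by a rotation we may assume $p_1\in[0,1)$, and then, using the disk‑action formula $M_q\cdot p_1=\tfrac{p_1+q}{\bar q p_1+1}$, a short computation shows this map is injective and that $|M_q\cdot p_1|\to 1$ as $|q|\to 1$. By invariance of domain the map is open, hence an injective, proper, open self‑map of $\D$, hence a homeomorphism onto $\D$. In particular, for every pair $(p_1,p_2)$ there is a unique $q=q(p_1,p_2)\in\D$ with $M_q\cdot p_1=p_2$; applying the same invariance‑of‑domain argument to the continuous injection $(q,p_1)\mapsto(M_q\cdot p_1,\,p_1)$ of $\D\times\D$, which is onto by what was just shown, gives that $q(p_1,p_2)$ depends continuously on $(p_1,p_2)$. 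I then set $\Phi(p_1,p_2):=M_{q(p_1,p_2)}$, which is continuous and satisfies $\Phi(p_1,p_2)\cdot p_1=p_2$ by construction.

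\textbf{The estimate.} With $q=q(p_1,p_2)$ we already have $\|\Phi(p_1,p_2)-\Id\|=e^{d(0,q)/2}-1$, so it remains to show $d(0,q)\le d(p_1,p_2)$. But $M_q$ is a hyperbolic isometry of translation length $d(0,q)$, and a hyperbolic isometry displaces every point by at least its translation length; applying this to $p_1$ and using $M_q\cdot p_1=p_2$ gives $d(p_1,p_2)=d(p_1,M_q\cdot p_1)\ge d(0,q)$, as needed. (In the degenerate case $q=0$ one has $p_1=p_2$, $\Phi=\Id$, and both sides vanish.)

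\textbf{Where the work is.} Everything above is either a one‑line computation in the disk model or the elementary fact that a hyperbolic isometry moves points by at least its translation length; the only step needing genuine care is showing that $q\mapsto M_q\cdot p_1$ is \emph{onto} $\D$ — so that $\Phi$ is defined on all of $\D\times\D$ — together with the joint continuity of $q(p_1,p_2)$, which I would obtain from properness plus invariance of domain as above. The conceptual content is simply the observation that restricting the adjusting matrix to be a transvection through the center forces $\|\Phi-\Id\|=e^{d(0,q)/2}-1$ with $d(0,q)\le d(p_1,p_2)$.
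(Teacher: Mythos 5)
Your proof is correct and takes a genuinely different route, although the end product is literally the same map: both you and the paper produce, for each $(p_1,p_2)$, the unique symmetric positive-definite element of $\SL(2,\R)$ carrying $p_1$ to $p_2$ (the paper's $\Phi(q_1,q_2)=R_\theta H_\lambda R_{-\theta}$ is such a matrix, and you prove uniqueness). The paper gets there by explicit hyperbolic geometry: it finds the extended circle through $p_1,p_2$ meeting $\partial\D$ in antipodal points $\pm u$, projects orthogonally onto the associated diameter $\gamma$ to obtain $q_1,q_2$, takes the transvection along $\gamma$ moving $q_1$ to $q_2$, and gets the norm bound from the Saccheri-quadrilateral inequality $d(q_1,q_2)<d(p_1,p_2)$; continuity is then asserted as ``evident.'' You instead fix the symmetric positive-definite ansatz up front, get existence and joint continuity of $q(p_1,p_2)$ by a topological argument (injectivity via a short computation in the disk model, properness, invariance of domain), and derive the bound from the standard fact that a hyperbolic isometry displaces every point by at least its translation length, the minimum being attained on the axis. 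Your route buys conceptual clarity — it isolates why $\Phi$ is canonical and makes continuity a clean degree-theoretic matter rather than an implicit consequence of a geometric construction — at the cost of invoking invariance of domain and the displacement-function monotonicity; the paper's route is more elementary and self-contained but requires assembling a somewhat intricate figure. Your identification of $\Phi(p_1,p_2)$ as \emph{the} spd matrix with $\Phi\cdot p_1=p_2$ is a nice observation that the paper leaves implicit.
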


Let us recall a few more facts about the half-plane and disk models, that we will use in the proof of the lemma.
An \emph{extended circle} means either an Euclidean circle or an Euclidean line in the complex plane.
\begin{itemize}
\item The geodesics on $\H$ (resp.~$\D$) are arcs of extended circles that meet orthogonally the real axis $\partial \H$ (resp.~the unit circle $\partial \D$) at the endpoints (called \emph{points at infinity}).
\item The points lying at a fixed positive distance from a geodesic $\gamma$ form two arcs of extended circles $\gamma_1$ and $\gamma_2$ that have the same points at infinity as $\gamma$. See Figure~\ref{f.facts}~(left).
    We say that $\gamma$ and $\gamma_1$ are \emph{equidistant curves}.
\item A quadrilateral $p_1 q_1 q_2 p_2$ is called a \emph{Saccheri quadrilateral} with \emph{base} $q_1 q_2$ and \emph{summit} $p_1 p_2$ if the angles at vertices $q_1$ and $q_2$ are right and the sides $p_1 q_1$ and $p_2 q_2$ (called the \emph{legs}) have the same length. See Figure~\ref{f.facts}~(right). The fact is that the summit is necessarily longer than the base.
\end{itemize}

\psfrag{g}[][r]{{\tiny $\gamma$}}
\psfrag{g1}[][l]{{\tiny $\gamma_1$}}
\psfrag{g2}[][r]{{\tiny $\gamma_2$}}
\psfrag{p1}[][l]{{\tiny $p_1$}} \psfrag{p2}[][r]{{\tiny $p_2$}}
\psfrag{q1}[][l]{{\tiny $q_1$}} \psfrag{q2}[][r]{{\tiny $q_2$}}
\begin{figure}[hbt]
\includegraphics[height=3.5cm]{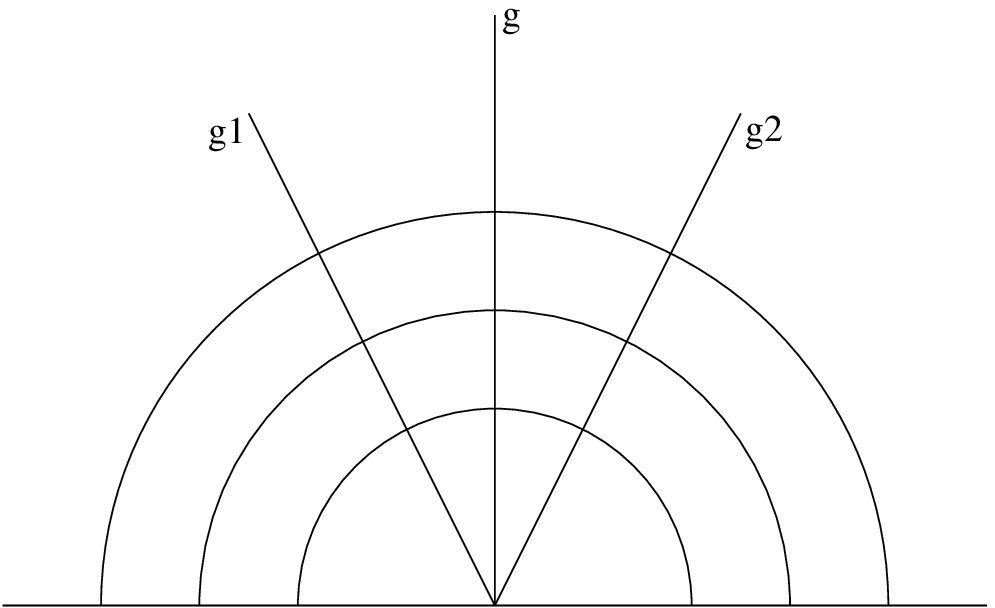}
\qquad\qquad
\includegraphics[height=4.5cm]{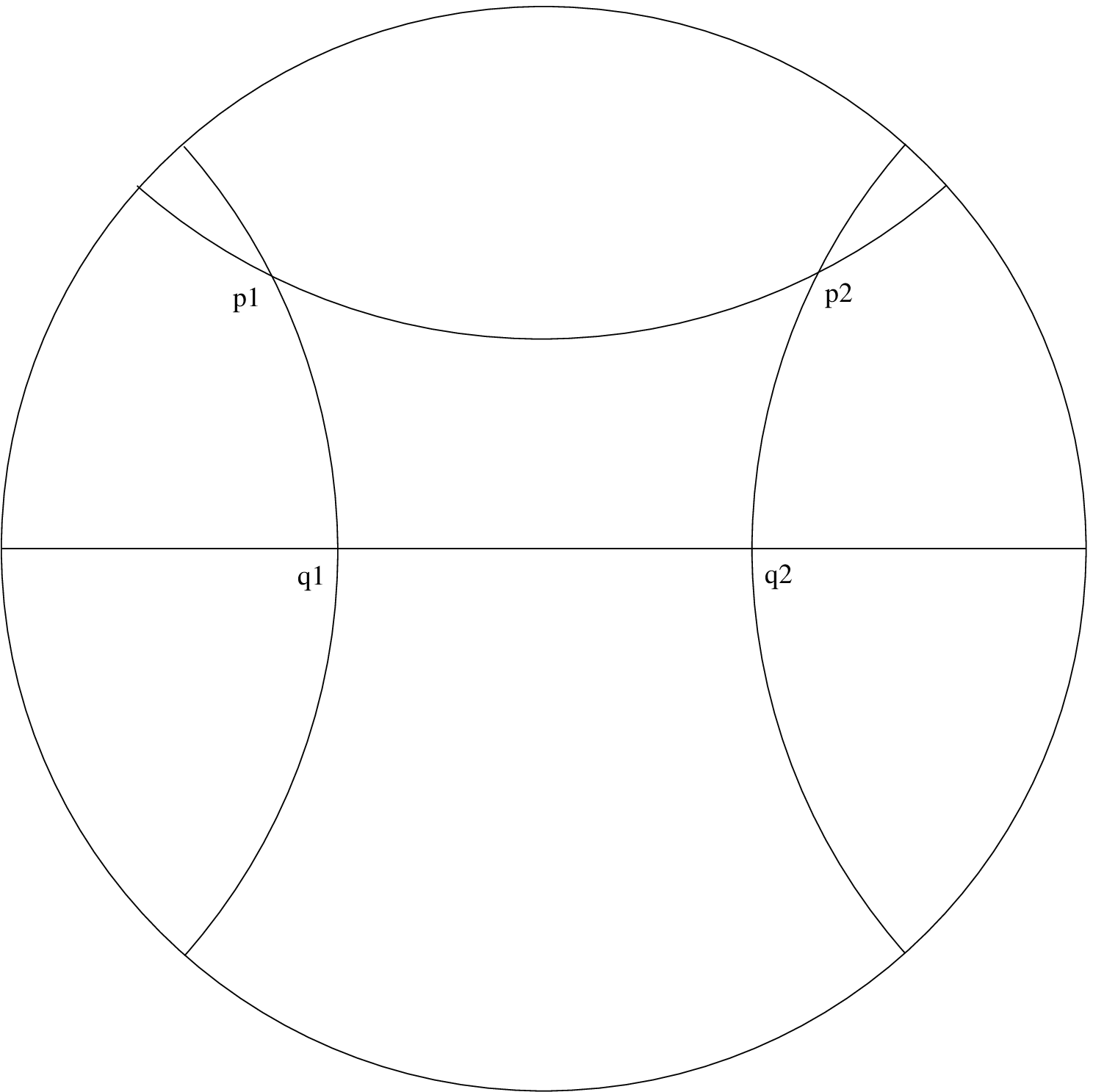}
\caption{Left: equidistant curves on $\H$.
Right: a Saccheri quadrilateral $p_1 q_1 q_2 p_2$ on $\D$.}
\label{f.facts}
\end{figure}

\begin{proof}[Proof of Lemma~\ref{l.first adjust}]
We will define the matrix $\Phi(p_1, p_2)$ explicitly.
It is the identity if $p_1 = p_2$, so from now on consider $p_1 \neq p_2$.

We first consider a particular case,
where we rewrite the two points as $q_1$, $q_2$.
Assume that the (whole) geodesic $\gamma$ containing $q_1$ and $q_2$
also contains $0$.
That is, $\gamma$ is a piece of Euclidean line.
Let $u$ be the point in the circle $\{|z|=1\}$
such that the line contains $-u$, $q_1$, $q_2$, $u$, in this order.
Consider the hyperbolic isometry
that preserves the geodesic $\gamma$, translating it and taking $q_1$ to $q_2$.
That isometry corresponds to a matrix of the form:
$$
A = R_{\theta} H_\lambda R_{-\theta} \, ,
\quad \text{where }
R_\theta = \begin{pmatrix} \cos \theta & -\sin \theta \\ \sin\theta & \cos \theta \end{pmatrix}, \
H_\lambda = \begin{pmatrix} \lambda & 0 \\ 0 & \lambda^{-1} \end{pmatrix},
$$
for some $\theta \in \R$ (in fact, $e^{-2i\theta} = u$) and $\lambda>1$.
Since the isometry translates $\gamma$, we have
$d(0, A \cdot 0) = d(q_1, A \cdot q_1) = d(q_1, q_2)$.
Therefore~\eqref{e.norm} gives
$\lambda = \|A\| =  e^{d(q_1,q_2)/2}$.
On the other hand, $\|A - \Id \| = \|H_\lambda - \Id\| = \lambda - 1$,
so we can define $\Phi(q_1,q_2) = A$ and the bound claimed in the statement of the lemma
becomes an equality.

\psfrag{mu}[][l]{{\tiny $-u$}}\psfrag{u}[][r]{{\tiny $u$}} \psfrag{p1}[][l]{{\tiny $p_1$}}
\psfrag{p2}[][cu]{{\tiny $p_2$}} \psfrag{q1}[][l]{{\tiny $q_1$}}
\psfrag{q2}[][r]{{\tiny $q_2$}} \psfrag{0}[][c]{{\tiny $0$}}
\psfrag{g}{{\tiny $\gamma$}} \psfrag{tg}{{\tiny $\tilde \gamma$}}
\begin{figure}[hbt]
\includegraphics[height=6cm]{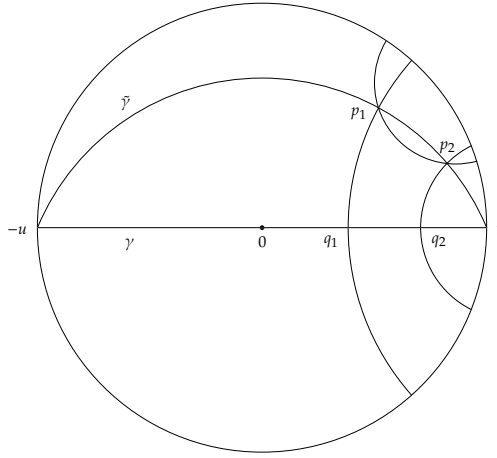}
\caption{Proof of Lemma~\ref{l.first adjust}.}
\label{f.proof}
\end{figure}

Next let us consider the general case.
Given $p_1$ and $p_2$,
consider the family of extended circles that contain $p_1$ and $p_2$.
There exists a unique $C$ in this family that intersects the circle $\{|z|=1\}$
in two antipodal points $u$ and $-u$.
(See Figure~\ref{f.proof}).
Let $\tilde \gamma = C \cap \D$, and let
$\gamma$ be the geodesic whose points at infinity are $u$ and $-u$;
so $\gamma$ and $\tilde \gamma$ are equidistant curves.
Notice that the case already treated corresponds to the case where $\tilde \gamma = \gamma$
is a geodesic.
Let $q_1$, resp.~$q_2$, be the point in $\gamma$ which has the least hyperbolic distance to
$p_1$, resp.~$p_2$.
Notice that $\Phi(q_1,q_2)$ is already defined.
We set  $\Phi(p_1, p_2) = \Phi(q_1, q_2)$.

Because $\gamma$ and $\tilde \gamma$ are equidistant, we have $d(p_1,q_1) = d(p_2, q_2)$.
It follows that $p_1 q_1 q_2 p_2$ is a Saccheri quadrilateral.
In particular, $d(q_1, q_2) < d(p_1, p_2)$ and hence $\|\Phi(p_1,p_2)\| < e^{d(p_1,p_2)/2}$.
Also, since $\Phi(q_1,q_2)$ translates the geodesic $\gamma$ sending $q_1$ to $q_2$, it
sends the leg $q_1 p_1$ to the leg $q_2 p_2$, and in particular, sends $p_1$ to $p_2$,
as desired.

This completes the definition of $\Phi$; continuity is evident.
\end{proof}

\begin{lemma}\label{l.adjust}
For every $n \geq 1$, there exists a continuous map
$\Psi_n : \SL(2,\R)^n \times \D^2 \to \SL(2,\R)^n$ such that
if $\Psi_n(A_1,\ldots,A_n,p,q) = (\tilde A_1,\ldots,\tilde A_n)$ then
\begin{enumerate}
\item $\tilde A_n \cdots \tilde A_1 \cdot p = q$ and
\item $\|\tilde A_i A_i^{-1} - \Id\| \leq
\exp \left( \tfrac{1}{2n} d(A_n \cdots A_1 \cdot p,q)) \right)-1$ for $1 \leq i
\leq n$.
\end{enumerate}
\end{lemma}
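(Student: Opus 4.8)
The plan is to reduce the multi-matrix adjustment to $n$ successive applications of Lemma~\ref{l.first adjust}, distributing the total ``correction distance'' evenly among the $n$ factors so that each individual perturbation is controlled by the $\tfrac{1}{2n}$-fraction of $d(A_n\cdots A_1\cdot p, q)$. The key geometric idea is that the hyperbolic distance is what behaves additively here: we want to choose intermediate points $p=p_0, p_1,\ldots,p_n=q$ along a geodesic so that each leg has length $\tfrac{1}{n}d(A_n\cdots A_1\cdot p,q)$, but we must be careful about \emph{where} these points live, because the correction to $A_i$ happens after applying $A_i\cdots A_1$ to $p$.

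First I would set $D := d(A_n\cdots A_1 \cdot p, q)$ and let $r = A_n\cdots A_1\cdot p$ be the ``wrong'' endpoint. Choose the geodesic segment from $r$ to $q$ and let $q = z_n, z_{n-1}, \ldots, z_0 = r$ be the points on it with $d(z_{i-1}, z_i) = D/n$ for each $i$; these depend continuously on $(r,q)$, hence on $(A_1,\ldots,A_n,p,q)$, by standard hyperbolic geometry (and one should note the degenerate case $r=q$, where all $z_i$ coincide and every $\tilde A_i = A_i$, so $\Phi$ returns the identity — consistent with Lemma~\ref{l.first adjust}). Now I would pull these target points back through the cocycle: the natural definition is to build $\tilde A_i$ so that, writing $P_i = \tilde A_i\cdots \tilde A_1\cdot p$, we have $P_i$ lying at the appropriate ``corrected'' position. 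Concretely, define the matrices inductively: having defined $\tilde A_1,\ldots,\tilde A_{i-1}$, set $\tilde A_i := \Phi\big(A_i\cdot P_{i-1},\, w_i\big)\, A_i$, where $w_i$ is chosen so that after the remaining \emph{uncorrected} steps $A_n\cdots A_{i+1}$ the point $w_i$ would map to $z_i$; i.e. $w_i = (A_n\cdots A_{i+1})^{-1}\cdot z_i$. Then $\tilde A_i\cdot P_{i-1} = w_i$, and one checks by induction that $P_i = w_i$ and that $A_n\cdots A_{i+1}\cdot P_i = z_i$; in particular at $i=n$ we get $\tilde A_n\cdots\tilde A_1\cdot p = P_n = z_n = q$, which is conclusion (a).

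For the norm estimate (b), by Lemma~\ref{l.first adjust} we have $\|\tilde A_i A_i^{-1} - \Id\| = \|\Phi(A_i\cdot P_{i-1}, w_i) - \Id\| \le \exp\big(\tfrac12 d(A_i\cdot P_{i-1}, w_i)\big) - 1$, so it suffices to show $d(A_i\cdot P_{i-1}, w_i) \le D/n$. Since $A_n\cdots A_{i+1}$ is an isometry of $\D$ and sends $A_i\cdot P_{i-1}$ to $A_n\cdots A_i\cdot P_{i-1} = A_n\cdots A_i\cdot w_{i-1}$ and sends $w_i$ to $z_i$, this reduces to bounding $d(A_n\cdots A_i\cdot w_{i-1},\, z_i)$. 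Unwinding one more step, $A_n\cdots A_i\cdot w_{i-1}$ equals $A_n\cdots A_{i+1}\cdot(A_i\cdot w_{i-1})$, and I would arrange the induction so that at the previous stage we already forced $A_n\cdots A_i \cdot P_{i-1} = z_{i-1}$ — wait, that is not quite what the naive scheme gives, since $A_i$ is being corrected. The main obstacle, and the place requiring the most care, is exactly this bookkeeping: I expect the clean way is to define things the other way around, fixing the targets first and letting the telescoping of distances be automatic. The correct inductive invariant is $A_n\cdots A_{i+1}\cdot P_i = z_i$; combined with $P_{i-1}$ satisfying $A_n\cdots A_i\cdot P_{i-1} = z_{i-1}$, the quantity to estimate becomes $d\big(A_n\cdots A_{i+1}\cdot(A_i\cdot P_{i-1}),\, z_i\big)$, and using the invariant at level $i-1$ this is $d\big(A_n\cdots A_{i+1}\cdot(A_i\cdot P_{i-1}),\, z_i\big)$ where $A_n\cdots A_{i+1}$ maps $A_i\cdot P_{i-1}$ somewhere; one then observes this distance is exactly $d(z_{i-1}',z_i)$ for a suitable point and is at most $D/n$ by the triangle inequality once the endpoints are anchored correctly. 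Continuity of $\Psi_n$ is inherited from continuity of $\Phi$, of the geodesic-subdivision map, and of matrix multiplication and inversion; the only subtlety is the degenerate cases, handled by the fact that $\Phi$ extends continuously through $p_1=p_2$.
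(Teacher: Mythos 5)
Your proposal follows exactly the paper's argument: subdivide the geodesic from $A_n\cdots A_1\cdot p$ to $q$ into $n$ equal legs, pull the division points back through the partial products $(A_n\cdots A_{i+1})^{-1}$, and apply $\Phi$ from Lemma~\ref{l.first adjust} at each step. The hesitation at the end is a false alarm: once you have the invariant $P_{i-1}=w_{i-1}$ (equivalently $A_n\cdots A_i\cdot P_{i-1}=z_{i-1}$), the isometry $A_n\cdots A_{i+1}$ sends $A_i\cdot P_{i-1}$ to $z_{i-1}$ and $w_i$ to $z_i$, so $d(A_i\cdot P_{i-1},w_i)=d(z_{i-1},z_i)=D/n$ is an exact equality — no triangle inequality or further bookkeeping is needed, and since $P_{i-1}=w_{i-1}$ depends only on the original data, $\Psi_n$ is given by an explicit (non-inductive) formula as in the paper.
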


\begin{proof}
Let $w_0 = A_n \cdots A_1 \cdot p$ and $L = d(w_0,q)$. For $1 \leq
i \leq n$, let $w_i$ be the point in the hyperbolic geodesic
segment joining $w_0$ and $q$ which is at distance $\frac {iL}{n}$
of $w_0$. Let also $z_i = (A_n \cdots A_{n-i+1})^{-1} \cdot w_i$,
for $0 \leq i \leq n$. Then $z_0=p$, $z_n=q$ and $d(A_i \cdot
z_{i-1}, z_i) =\frac {L}{n}$. Let $\tilde A_i = \Phi(A_i \cdot
z_{i-1}, z_i) A_i$, where $\Phi$ is as in Lemma~\ref{l.first
adjust}.
\end{proof}

\subsection{Proof of Theorem~\ref{t.bounded}} \label{ss.bounded}

\begin{proof}
Let $A: X \to \SL(2,\R)$ be such that $(f,A)$ is not uniformly
hyperbolic, and let $\delta_0>0$ be given. We want to find $\tilde
A \in C^0(X,\SL(2,\R))$ with $\|\tilde A-A\|_{C^0}<\delta_0$, and
a continuous function $z:X \to \D$ such that $\tilde A(x) \cdot
z(x)=z(f(x))$. Accomplishing this, we simply set $B(x) =
\Phi(z(x),0)$ (where $\Phi$ is given by Lemma~\ref{l.first
adjust}) and then $B(f(x)) A(x) B(x)^{-1}$ will be rotations.

Because the cocycle is not uniformly hyperbolic,
a theorem by Bochi~\cite{Bochi} gives a $C^0$-perturbation of $A$ whose
upper Lyapunov exponent (with respect to the unique invariant probability) is zero.
For simplicity of writing, let $A$ denote this perturbation.
Since $f$ is uniquely ergodic, a result due to Furman~\cite{Furman} gives that
$(f,A)$ has \emph{uniform subexponential growth}, that is,
\begin{equation}\label{e.unif subexp growth}
\lim_{n \to \infty} \frac{1}{n} \log \|A^n(x)\|= 0 \quad \text{uniformly on $x\in X$.}
\end{equation}

Let $\delta>0$ be such that $(e^{7 \delta}-1)\|A\|_{C^0} < \delta_0$.
Let $n_0$ be such that
$n \ge n_0$ implies $\|A^n(x)\| \le e^{n\delta}$ for every $x$.

The rest of the argument is analogous to the corresponding steps
in the proof of Lemma~\ref{l.real}, with the disk playing the role
of the line. Let $p_i/q_i$ be the $i$th continued fraction
approximation of $\alpha$. Choose and fix $i$ large so that
$$
q_i > \max \left(n_0, \delta^{-1} \log \|A\|_{C^0}\right).
$$
Let $I\subset \T^1$ be the shortest closed interval containing
$0$ and $q_i \alpha$.
The rest of the proof will be divided into three steps:

\medskip
\noindent \textit{Step 1: Finding an almost-invariant section $z_1: X \to \R$.}
First we define $z_0$ on $h^{-1}(\{0, q_i\alpha, (q_{i+1}+q_i)\alpha\})$ by
$$
z_0(f^n (x)) = A^n (x) \cdot 0
\text{ for $x \in h^{-1}(0)$ and $n=0$, $q_i$, or $q_{i+1}+q_i$.}
$$
Then we extend continuously $z_0$ to $h^{-1}(I)$ in a way such that
$$
\sup_{x\in h^{-1}(I)} d(z_0(x), 0) =
\sup_{x\in h^{-1} (\{0, q_i\alpha, (q_{i+1}+q_i)\alpha\})} d(z_0(x), 0).
$$
Consider the skew-product
$$
F : X \times \D \to X \times \D, \; F(x,z) = (f(x), A(x) \cdot z).
$$
Applying Lemma~\ref{l.almost invariant} to $F$ and $z_0$, we find
a continuous map $z_1:X \to \D$ which extends $z_0$ and such that
$z_1(f(x)) = A(x) \cdot z_1(x) $ if $x \not\in h^{-1}(\interior
I)$.

\medskip
\noindent \textit{Step 2: Definition of maps $\tilde A: X \to
\SL(2,\R)$ and $z: X \to \D$.} Let $\Psi_{q_{i+1}}$ be given by
Lemma~\ref{l.adjust} and put
\begin{multline*}
\big(\tilde A(x), \tilde A(f(x)), \ldots, \tilde A(f^{q_{i+1}-1}(x))\big) = \\
= \Psi_{q_{i+1}}\big(A(x), A(f(x)), \ldots, A(f^{q_{i+1}-1}(x)), z_1(x), z_1(f^{q_{i+1}}(x)) \big),
\end{multline*}
for each $x \in h^{-1}(I)$.
This defines $\tilde A$ on $\bigsqcup_{n=0}^{q_{i+1}-1} f^n(h^{-1}(I))$.
Let $\tilde A$ equal $A$ on the rest of~$X$.

For each $x \in h^{-1}(I)$ and $1 \le n \le q_{i+1} - 1$, let $z(
f^n(x)) = \tilde A^n (x) \cdot z_1(x)$. This defines $z$ on
$\bigsqcup_{n=1}^{q_{i+1}-q_i-1} f^n(h^{-1}(I))$. Let $z$ equal
$z_1$ on the rest of~$X$.

It is easy to see that both maps $\tilde A$ and $z$ are continuous
on the whole $X$, and satisfy $\tilde{A}(x) \cdot z(x) = \tilde{A}
(f(x))$.

\medskip
\noindent \textit{Step 3: Distance estimate.}
To complete the proof, we need to check that $\tilde A$ is $C^0$-close to $A$.
Begin noticing that, by relation~\eqref{e.norm},
$$
B \in \SL(2,\R), \ w \in \D \ \Rightarrow \
d(B \cdot w, 0) \le d(w,0) + 2\log\|B\|.
$$

Now fix $x \in h^{-1}(I)$.
By the definition of $z_0$, we have
$$
d( z_0(x), 0) \le 2(q_{i+i}+q_i) \delta.
$$
If $y = f^{q_{i+1}}(x)$, then $\tau(y)$ equals $1$ or $q_i+1$. In
either case, $\| [A^{\tau(y)}(y)]^{-1}\| = \| A^{\tau(y)}(y)\| \le
e^{(q_i+1)\delta}$. Since $z_1(y) = [A^{\tau(y)}(y)]^{-1} \cdot
z_0(f^{\tau(y)}(y))$, we get
$$
d(z_1(y), 0)
\le d(z_0(f^{\tau(y)}(y)), 0) + 2\log \| A^{\tau(y)}(y) \|
\le 2(q_{i+1} + 2q_i + 1) \delta
$$
Putting things together:
\begin{align*}
d \big( A^{q_{i+1}}(x) \cdot z_0(x), z_1 (f^{q_{i+1}}(x)) \big)
&\le d \big( A^{q_{i+1}}(x) \cdot z_0(x), 0 \big)  + d \big( 0 , z_1 (f^{q_{i+1}}(x)) \big)    \\
&\le d(z_0(x), 0) + 2\log\|A^{q_{i+1}}(x)\| + d (0, z_1 (f^{q_{i+1}}(x)))\\
&\le 2(3q_{i+1}+3q_i+1)\delta.
\end{align*}
By Lemma~\ref{l.adjust},
$$
\|\tilde A A^{-1} - \Id\|_{C^0} \le
\exp \left[\tfrac{1}{2q_{i+1}} d \big( A^{q_{i+1}}(x) \cdot z_0(x), z_1 (f^{q_{i+1}}(x)) \big)\right]-1
< e^{7\delta} - 1.
$$
So $\|\tilde A - A\|_{C^0} \le \delta_0$, as desired.
\end{proof}

\begin{rem}\label{rem.minimal explanation}
A result by Avila and Bochi~\cite{AB minimal} says that
a generic $\SL(2,\R)$-cocycle over a \emph{minimal} homeomorphism
either is uniformly hyperbolic or has uniform subexponential growth \eqref{e.unif subexp growth}
(which is equivalent to the simultaneous vanishing of the Lyapunov exponent for \emph{all} $f$-invariant measures),
provided the space $X$ is compact with finite dimension.
Using this in the place of the aforementioned results from \cite{Bochi} and \cite{Furman},
we obtain the generalization claimed in Remark~\ref{rem.minimal suffices} --
the rest of the proof is the same.
\end{rem}

\subsection{Completion of the Proof of Theorem \ref {t.reduce}} \label{ss.final}

We first prove two lemmas:

\begin{lemma}\label{l.b}
Assume that $A: X \to \SL(2,\R)$ is homotopic to a constant,
and that $(f,A)$ is not uniformly hyperbolic.
Then there exists $\tilde A$ arbitrarily $C^0$-close to $A$ and
$B \in C^0(X,\SL(2,\R))$ such that
$B(f(x))\tilde{A}(x) B(x)^{-1}$ is a constant in $\SO(2,\R)$.
\end{lemma}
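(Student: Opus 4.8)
The plan is to combine Theorem~\ref{t.bounded} with Lemma~\ref{l.real}. By Theorem~\ref{t.bounded}, since $(f,A)$ is not uniformly hyperbolic, we may first replace $A$ by an arbitrarily $C^0$-close cocycle that is conjugate to an $\SO(2,\R)$-valued cocycle; so without loss of generality there is $B_1 \in C^0(X,\SL(2,\R))$ with $B_1(f(x))A(x)B_1(x)^{-1} = R_{\psi(x)}$ for some $\psi \in C^0(X,\R)$. (Here I need to know that the rotation angle can be chosen as a genuine continuous real-valued function, not just a circle-valued one; this is where the homotopy hypothesis enters — since $A$ is homotopic to a constant and $B_1$ can be taken homotopic to a constant as well, the map $x \mapsto R_{\psi(x)} \in \SO(2,\R)$ is homotopic to a constant, hence $\psi$ lifts to $C^0(X,\R)$. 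One must check the conjugacy $B_1$ produced in the proof of Theorem~\ref{t.bounded} respects this; it does, because $B_1(x) = \Phi(z(x),0)$ with $\Phi$ valued in $\SL(2,\R)$ and $\Phi(p,p)=\Id$, so $B_1$ is homotopic to a constant.)

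Next I would apply Lemma~\ref{l.real} to the function $\psi$: for any $\delta > 0$ there exists $\tilde\psi \in C^0(X,\R)$ with $\|\psi - \tilde\psi\|_{C^0} < \delta$ and $\tilde\psi = w\circ f - w + a_0$ for some $w \in C^0(X,\R)$ and $a_0 \in \R$. Then $x \mapsto R_{\tilde\psi(x)}$ is $C^0$-close to $x \mapsto R_{\psi(x)}$, and setting $B_2(x) = R_{w(x)}$ we get $B_2(f(x)) R_{\tilde\psi(x)} B_2(x)^{-1} = R_{a_0}$, a constant in $\SO(2,\R)$ (rotations commute). Composing the conjugacies, $B = B_2 B_1 \in C^0(X,\SL(2,\R))$, and pulling the perturbation $R_\psi \rightsquigarrow R_{\tilde\psi}$ back through $B_1$ to a perturbation $A \rightsquigarrow \tilde A$ of size controlled by $\|B_1\|_{C^0}\|B_1^{-1}\|_{C^0}\,\delta$, we obtain $B(f(x))\tilde A(x)B(x)^{-1} = R_{a_0}$, as required. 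Choosing $\delta$ small enough makes $\tilde A$ as $C^0$-close to the original $A$ as we like.

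The main obstacle I anticipate is the lifting step: ensuring that the $\SO(2,\R)$-valued cocycle to which $A$ is conjugate has a \emph{globally continuous real} angle function, which is precisely the point where the hypothesis ``$A$ homotopic to a constant'' is used (without it, only a circle-valued $\psi$ is available and one cannot run Lemma~\ref{l.real}). This requires tracking homotopy classes through the conjugacy in Theorem~\ref{t.bounded} — that the conjugating map $B_1$ is homotopic to a constant, so that $R_\psi$ inherits the trivial homotopy class from $A$, and $\pi_1(\SO(2,\R)) = \Z$ then gives the lift. Everything else is routine: Lemma~\ref{l.real} does the cohomological work, and the commutativity of $\SO(2,\R)$ makes the final conjugation trivial.
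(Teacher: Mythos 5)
Your argument is correct and follows the same overall strategy as the paper (Theorem~\ref{t.bounded}, then reduce to lifting an $\SO(2,\R)$-valued cocycle and applying Lemma~\ref{l.real}), but the way you handle the homotopy class differs from the paper's in one respect worth noting. You establish that the rotation-angle map $\psi$ lifts to $\R$ by arguing that the specific conjugacy produced inside the proof of Theorem~\ref{t.bounded} — namely $B_1(x)=\Phi(z(x),0)$ with $z:X\to\D$ — is nullhomotopic because $\D$ is contractible, and then that $R_\psi = B_1(f(\cdot))\tilde A(\cdot) B_1(\cdot)^{-1}$ is nullhomotopic. This is valid, and you rightly identify it as the crux: if $B_1$ were allowed to have an arbitrary homotopy class and $f$ is not homotopic to the identity (e.g.\ the skew-shift), then $[R_\psi]=[B_1\circ f]+[A]-[B_1]$ need not vanish even when $[A]=0$. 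The paper avoids this dependence on the internals of Theorem~\ref{t.bounded} by a small trick that is worth knowing: it replaces $B_1$ by $B_2 = r\circ B_1$, where $r:\SL(2,\R)\to\SO(2,\R)$ is a deformation retract, and considers the new $\SO(2,\R)$-valued cocycle $A_2 = B_2(f(\cdot))A_1(\cdot)B_2(\cdot)^{-1}$. Running the retraction $r_t$ gives an explicit homotopy from $A=A_{2,0}$ to $A_2=A_{2,1}$, so $A_2$ is both conjugate and homotopic to $A$ regardless of the homotopy class of $B_1$. That version treats Theorem~\ref{t.bounded} as a black box and is therefore slightly more robust; your version is more direct once one has the explicit form of $B_1$, and both lead to the same lift-and-solve-cohomological-equation conclusion. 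Your handling of the lifting itself (nullhomotopy in $\SL(2,\R)$ implies nullhomotopy in $\SO(2,\R)$ since the inclusion is a homotopy equivalence, then lift through $\R\to\SO(2,\R)$), the application of Lemma~\ref{l.real}, and the transport of the perturbation back through $B_1$ are all in order.
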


\begin{proof}
By Theorem~\ref{t.bounded}, we can perturb $A$ so that there exist
$A_1 \in C^0(X,\SO(2,\R))$ and $B_1 \in C^0(X,\SL(2,\R))$ such
that $A(x) = B_1(f(x)) A_1(x) B_1(x)^{-1}$.

Let $r: \SL(2,\R) \to \SO(2,\R)$ be a deformation retract. Let
$B_2(x) = r (B_1(x))$ and $A_2(x) = B_2(f(x)) A_1(x) B_2(x)^{-1}$.
Then $A_2(x)$ is (i) $\SO(2,\R)$-valued, (ii) conjugate to $A(x)$,
(iii) homotopic to $A(x)$ and therefore to constant.

Due to the existence of the deformation retract $r$, $A_2$ is also
homotopic to a constant as a $X\to\SO(2,\R)$ map. Consider the
covering map $\R \to \SO(2,\R)$ given by $\theta \mapsto
R_\theta$. Let $\phi:X \to \R$ be a lift of $A_2$, that is,
$A_2(x) = R_{\phi(x)}$.

By Lemma~\ref{l.real}, there exists $\tilde \phi$ very close to
$\phi$ such that $\tilde{\phi} = w \circ f - w + a_0$ for some $w
\in C^0(X, \R)$ and $a_0 \in \R$. So the map $\tilde{A}_2 =
R_{\tilde\phi(x)}$ is close to $A_2$ and conjugate to the constant
$R_{a_0}$.

Since $A$ and $A_2$ are conjugate, there exists
$\tilde A$ close to $A$
and conjugate to $\tilde{A}_2$ (and therefore to the constant).
Then $\tilde A$ is the map we were looking for.
\end{proof}

\begin{lemma}\label{l.reverse}
If $A \in \Ruth$, then $(f,A)$ is $\PSL(2,\R)$-conjugate to a
cocycle which is homotopic to a constant.
\end{lemma}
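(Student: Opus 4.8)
The plan is to unwind the definitions. By hypothesis $A \in \Ruth$, so by definition there is a reducible cocycle $(f,\tilde A)$ with $\tilde A$ homotopic to $A$. Being reducible means $(f,\tilde A)$ is $\PSL(2,\R)$-conjugate to a constant cocycle $(f,A_*)$: there is $B \in C^0(X,\PSL(2,\R))$ with $\tilde A(x) = B(f(x)) A_* B(x)^{-1}$ in $\PSL(2,\R)$. I want to produce, from the original $A$, a $\PSL(2,\R)$-conjugate cocycle that is homotopic to a constant; the natural candidate is to conjugate $A$ by (a lift of) the \emph{same} $B$, or rather to transport the homotopy $A \simeq \tilde A$ through the conjugacy.

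First I would set $A'(x) = B(f(x)) A(x) B(x)^{-1}$, the $\PSL(2,\R)$-conjugate of $A$ by $B$; a priori $A'$ is only $\PSL(2,\R)$-valued, so the second step is to check that the conjugacy can in fact be realized inside $C^0(X,\SL(2,\R))$, equivalently that $A'$ admits a continuous $\SL(2,\R)$-lift. The point is that conjugating by an element of $\PSL(2,\R)$ is well-defined on $\SL(2,\R)$ itself: if $B$ has a continuous lift $\hat B: X \to \SL(2,\R)$, then $\hat B(f(x)) A(x) \hat B(x)^{-1}$ is a bona fide continuous $\SL(2,\R)$-cocycle projecting to $A'$; and even when $B$ has no global lift, the $\pm$ ambiguities in $\hat B(f(x))$ and $\hat B(x)^{-1}$ are independent, but locally we may choose a continuous lift and the two sign choices can be synchronized — more cleanly, $\SL(2,\R)$ acts on itself by conjugation through $\PSL(2,\R)$, so $g \mapsto \hat B \, g \, \hat B^{-1}$ descends to a well-defined continuous map whatever lift of $B$ is used, since $(-\hat B) g (-\hat B)^{-1} = \hat B g \hat B^{-1}$. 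Hence $A' \in C^0(X,\SL(2,\R))$ and $(f,A)$ is $\PSL(2,\R)$-conjugate to $(f,A')$.

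Third, the same conjugation operation applied to $\tilde A$ gives $\tilde A'(x) := B(f(x)) \tilde A(x) B(x)^{-1} = A_*$ (mod $\pm$), a constant in $\PSL(2,\R)$; lifting, $\tilde A'$ is a constant $\SL(2,\R)$-cocycle, hence certainly homotopic to a constant. Fourth, since conjugation by the fixed continuous map $B$ is a continuous operation compatible with homotopies, the homotopy $A \simeq \tilde A$ in $C^0(X,\SL(2,\R))$ pushes forward to a homotopy $A' \simeq \tilde A'$ in $C^0(X,\SL(2,\R))$: if $A_t$ is a path from $A$ to $\tilde A$, then $A'_t(x) := \hat B(f(x)) A_t(x) \hat B(x)^{-1}$ (computed through $\PSL(2,\R)$ as above) is a continuous path from $A'$ to $\tilde A'$. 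Therefore $A'$ is homotopic to the constant $\tilde A' = A_*$, which is exactly what is claimed.

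The only genuinely delicate point is the lifting issue in the second and third steps, i.e.\ making sure that conjugating by a $\PSL(2,\R)$-valued map preserves $\SL(2,\R)$-valuedness and preserves homotopy classes; I expect this to be a short argument using that inner automorphisms of $\SL(2,\R)$ kill the center, so the whole construction descends through $\PSL(2,\R)$ without needing a global lift of $B$. Everything else is a direct transport of the defining properties of $\Ruth$ along the conjugacy.
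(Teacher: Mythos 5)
Your overall plan is the same as the paper's: conjugate $A$ by the $\PSL(2,\R)$-conjugacy that reduces the homotopic cocycle, observe that the result is homotopic to a constant, and lift. However, as written the argument has two concrete problems.

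First, a direction error: from $\tilde A(x) = B(f(x)) A_* B(x)^{-1}$ you should conjugate by $B^{-1}$, i.e.\ set $A'(x) = B(f(x))^{-1} A(x) B(x)$. With your choice $A'(x) = B(f(x)) A(x) B(x)^{-1}$, the ``same conjugation applied to $\tilde A$'' gives $B(f(x))\,\tilde A(x)\,B(x)^{-1} = B(f(x))^2 A_* B(x)^{-2}$, not $A_*$, so your Step 3 is false as stated. This is easily repaired by reversing the conjugation.

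Second, the lifting justification in Step 2 does not work. The identity $(-\hat B)\,g\,(-\hat B)^{-1} = \hat B\,g\,\hat B^{-1}$ applies only when the \emph{same} element appears on both sides of $g$; here you have $\hat B(f(x))$ on the left and $\hat B(x)$ on the right, which are different, so flipping the sign of one local branch of $\hat B$ but not the other flips the sign of the product. The obstruction to synchronizing these signs globally is a class in $H^1(X,\Z_2)$, and it is not automatically zero. The correct route (which is what the paper does, and which you essentially arrive at in your Step 4) is to observe \emph{first} that $x \mapsto B(f(x))^{-1}\pi(A(x))B(x)$ is a continuous $\PSL(2,\R)$-valued map that is homotopic to the constant $A_*$, since $A \simeq \tilde A$ and conjugation by the fixed $B$ is continuous in $C^0$; a null-homotopic map into $\PSL(2,\R)$ then lifts through the double cover $\SL(2,\R) \to \PSL(2,\R)$, and the lift is itself homotopic to a constant. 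So Steps 2 and 4 should be combined and reordered: homotopy to a constant in $\PSL(2,\R)$ \emph{first}, lifting \emph{second}. With those repairs your argument coincides with the paper's proof.
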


\begin{proof}
Let $\pi : \SL(2,\R) \to \PSL(2,\R)$ be the quotient map.
Since $A\in \Ruth$, there exist $B: X \to \SL(2,\R)$ homotopic to $A$,
$D : X \to \PSL(2,\R)$, and $C \in \PSL(2,\R)$ such that
$\pi(B(x)) = D(f(x)) C D(x)^{-1}$.
The map $x \in X \mapsto D(f(x))^{-1} \pi(A(x)) D(x) \in \PSL(2,\R)$
is homotopic to a constant;
therefore it can be lifted to a map $\tilde A: X \to \SL(2,\R)$,
which is itself homotopic to a constant.
$D$ is a $\PSL(2,\R)$-conjugacy between $A$ and~$\tilde A$.
\end{proof}

\begin{proof}[Proof of Theorem~\ref{t.reduce}]
We have already treated the first part of the theorem, so we will restrict
ourselves to the second part.

Fix $A$ and $A_*$ as in the statement.
By Lemma~\ref{l.reverse}, $(f,A)$ is $\PSL(2,\R)$-conjugate to a cocycle which is homotopic to
a constant.
Since the closure of a $\PSL(2,\R)$ conjugacy class is invariant under
$\PSL(2,\R)$ conjugacies, it is enough to consider the case where $A$ is
homotopic to a constant.  We are going to show that in this case $(f,A)$
lies in the closure of the $\SL(2,\R)$-conjugacy
class of $(f,A_*)$.

By Lemma~\ref{l.b}, $A$ can be perturbed to become conjugate to a
constant $C_* = R_\theta$ in $\SO(2,\R)$. We will explain how to
perturb $C_*$ (and hence $A$, because the conjugacy between $C_*$
and $A$ is fixed) in order that $(f, C_*)$ becomes conjugate to
$(f,A_*)$. There are two cases, depending on $A_*$.

In the case where $A_* = \Id$ or $A_*$ is elliptic (i.e., $|\tr
A_*| < 2$), there exist $B_* \in \SL(2,\R)$ and $\beta \in \R$
such that $A_* = B_*^{-1} R_\beta B_*$. Since $\alpha$ is
irrational, we can choose $k \in \Z$ such that
$2 \pi k\alpha + \theta$
is very close to $\beta$ (modulo $2 \pi \Z$).
We still have the right to perturb
$\theta$, so we can assume $2 \pi k\alpha + \theta = \beta$. Letting
$B(x) = B_* R_{2 \pi kh(x)}$, we see that $B(f(x)) R_\theta B(x)^{-1}$
is precisely $A_*$. So $(f, R_\theta)$ is conjugate to $(f,A_*)$,
as desired.

In the remaining case, $A_*$ is parabolic (i.e., $\tr A_* = \pm
2$) with $A_* \neq \pm \Id$. By the previous case, we can assume
$C_* = \pm \Id$ to start. In fact, we can perturb further and
assume $C_*$ is a parabolic matrix with $C_* \neq \pm \Id$. Then
$C_*$ and $A_*$ are automatically conjugate in the group
$\SL(2,\R)$, so we are done.
\end{proof}

\begin{rem}\label{rem.example}
Assuming that $A$ is homotopic to some cocycle which is conjugate to a constant,
``$\PSL(2,\R)$ conjugacy class'' can be replaced
by ``$\SL(2,\R)$ conjugacy class'' in the second part of Theorem~\ref{t.reduce}.
We give an example showing that the stronger conclusion
does not hold without additional hypotheses.
Let $f: \T^2 \to \T^2$ be given by
$(x,y) \mapsto (x+\alpha, y+2x)$. Let $A(x,y) = R_{2 \pi x}$. We
claim that:
\begin{enumerate}
\item $(f,A)$ is reducible.
\item For any $\tilde A$ close to $A$, $(f,\tilde A)$ is not conjugate to a constant $\SL(2,\R)$-cocycle.
\end{enumerate}
To prove (a), let $D(x,y) = R_{\pi y}$ (which is well-defined in $\PSL(2,\R)$),
and notice $D(f(x,y)) D(x,y)^{-1} = A(x,y)$.
To prove (b), we will show that for any continuous $B: \T^2 \to \SL(2,\R)$,
the map $C(x,y) = B(f(x,y)) A(x,y) B(x,y)^{-1}$ is not homotopic to a constant.
Consider the homology groups $H_1(\T^2) = \Z^2$ and $H_1(\SL(2,\R)) = \Z$, and the
induced homomorphisms
$$
f_*: (m,n) \mapsto (m, 2m+n), \quad
A_*: (m,n) \mapsto m, \quad
B_*: (m,n) \mapsto km + \ell n \, .
$$
We have\footnote{Recall that if $G$ is a path-connected
topological group and $\gamma_1$, $\gamma_2$, $\gamma:[0,1]\to G$
are such that $\gamma(t) = \gamma_1(t) \gamma_2(t)$, then the
$1$-chains $\gamma$ and $\gamma_1+\gamma_2$ are homologous.} $C_*
= B_* \circ f_* + A_* - B_*$, therefore $C_*:(m,n) \mapsto
(2\ell+1)m$ cannot be the zero homomorphism.
\end{rem}

\subsection{The Case of Cantor Groups} \label{ss.cantor groups}

Now assume the second case in Lemma~\ref{l.group dichotomy}.
So there are integers $q_i \to \infty$ and
continuous homomorphisms $h_i: \G \to \T^1$
such that the image of $h_i$ is the (cyclic) subgroup of $\T^1$ of order $q_i$.
Notice that the level sets of $h_i$ are compact, open, and are cyclically permuted by $f$.
They form a tower of height $q_i$ that
will replace the more complicate castle of Figure~\ref{f.tower} in our arguments.
Changing the definition of $h_i$, we can assume that
$h_i (f(x)) = h_i(x) + \frac{1}{q_i} \pmod{1}$.

There are only three proofs that need modification:

\begin{proof}[Proof of Lemma~\ref{l.real} in the Cantor case]
Fix $\phi \in C^0(X,\R)$ with mean zero, and let $\delta>0$ small.
Let $n_0$ be such that $n \ge n_0$ $\Rightarrow$ $|S_n/n|< \delta$ uniformly,
where $S_n$ is the $n$th Birkhoff sum of $f$. Choose $i$ such that
$q_i > n_0$.
Define $\tilde\phi$ and $w : X \to \R$ by
$$
x \in h_i^{-1}(0), \ 0 \le n < q_i \ \Rightarrow \
\tilde \phi (f^n(x)) = \phi(f^n(x)) - \frac{S_{q_i}(x)}{q_i} , \
w(f^n(x)) = S_n(x) - \frac{n S_{q_i}(x)}{q_i}.
$$
Then $\tilde \phi$ and $w$ are continuous, $\tilde \phi = w \circ f - w$,
and $|\tilde \phi - \phi| < \delta$.
\end{proof}

\begin{proof}[Proof of Theorem~\ref{t.bounded} in the Cantor case]
Assume $(f,A)$ is not uniformly hyperbolic.
Given $\delta_0 > 0$, let $\delta>0$ be such that $(e^\delta -1)\|A\|_{C^0} < \delta_0$.
Perturbing $A$, we can assume $\|A^n(x)\| < e^{n\delta}$
for every $n \ge n_0 = n_0(\delta)$.
Fix $q_i > n_0$.
Define $\tilde A: X \to \SL(2,\R)$
so that
$$
\big(\tilde A(x), \tilde A(f(x)), \ldots, \tilde A(f^{q_i-1}(x))\big) =
\Psi_{q_i}\big(A(x), A(f(x)), \ldots, A(f^{q_{i+1}-1}(x)), 0, 0 \big),
\ \forall x \in h_i^{-1}(0),
$$
where $\Psi_{q_i}$ is given by Lemma~\ref{l.adjust}.
Define $z: X \to \D$ by
$z(f^n(x)) = \tilde A ^n (x) \cdot 0$ for $x \in h_i^{-1}(0)$ and
$0 \le n < q_i$.
Then $\tilde A$ and $z$ are continuous, and satisfy
$\tilde A (x) \cdot z(x) = \tilde A(f(x))$.
Moreover, since $d(A^{q_i}(0) , 0) < q_i \delta$,
we have $\|\tilde A - A \| < (e^\delta -1 ) \|A\| < \delta_0$.
\end{proof}

\begin{proof}[Proof of Theorem~\ref{t.reduce}(b) in the Cantor case]
We only need to show that the closure
of the $\SO(2,\R)$-conjugacy class of a constant
$\SO(2,\R)$-valued cocycle contains all constant
$\SO(2,\R)$-valued cocycles.
Given a constant cocycle $R_\theta$, $i \in \N$, and $k \in \Z$,
let $B(x) = R_{2 \pi k h_i(x)}$.
Then $B(f(x)) R_\theta B(x)^{-1} = R_{\theta + 2\pi k/{q_i}}$.
So the claim follows.
\end{proof}

This completes the proofs of Theorems~\ref{t.bounded} and \ref{t.reduce}.

\section{Proof of the Results for Schr\"odinger Cocycles} \label {s.schrodinger}

In this section, we will prove Theorems \ref {t.P}, \ref
{t.regular}, and \ref{t.acspectrum}.

\subsection{Projection Lemma}
The proof of Theorems \ref {t.P} and \ref
{t.regular} is based on the following ``projection
lemma'':

\begin{lemma} \label {generalschrodinger}

Let $0 \leq r \leq \infty$.  Let $f:X \to X$ be a minimal
homeomorphism of a compact metric space with at least three points
(if $r=0$) or a minimal $C^r$ diffeomorphism of a $C^r$ compact
manifold. Let $A \in C^r(X,S)$ be a map whose trace is not
identically zero.  Then there exist a neighborhood $\cW
\subset C^0(X,\SL(2,\R))$ of $A$ and continuous maps
$$
\Phi=\Phi_A:\cW \to C^0(X,S) \quad \text{and} \quad
\Psi=\Psi_A:\cW \to C^0(X,\SL(2,\R))
$$
satisfying:
\begin{gather}
\Phi \text { and } \Psi \text { restrict to continuous maps } \cW
\cap C^s \to C^s, \text { for } 0 \leq s \leq r,  \label{bla1} \\
\Psi(B)(f(x)) \cdot B(x) \cdot \left[\Psi(B)(x)\right]^{-1}=\Phi(B)(x), \label{bla2} \\
\Phi(A)=A \text { and } \Psi(A)=\id. \label{bla3}
\end{gather}
\end{lemma}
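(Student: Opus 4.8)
The idea is to use the freedom of conjugating by a cocycle that is close to the identity to "correct" a perturbation $B$ of a Schr\"odinger cocycle $A$ back into the set $S$, in a way that depends continuously (and preserves regularity) on $B$. Recall that a matrix $M \in \SL(2,\R)$ lies in $S$ precisely when its lower-left entry is $1$ and its lower-right entry is $0$; equivalently, $M \cdot (0,1)^{\mathrm{t}} = (-1,0)^{\mathrm{t}}$, i.e.\ $M$ sends the vertical direction to the horizontal direction with the correct normalization. Thus being $S$-valued is a condition on how the cocycle acts on a single vector field, and the plan is to find $\Psi(B)$ so that the conjugated cocycle $\Phi(B)$ restores this condition pointwise.

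First I would make this precise: for $M$ near $\Id$, the vector $M\cdot(0,1)^{\mathrm{t}}$ is near $(0,1)^{\mathrm{t}}$, so in particular nonzero; for $B$ in a small enough neighborhood $\cW$ of $A$, the vector $v(x) := B(x)\cdot(0,1)^{\mathrm{t}}$ is never a multiple of $(0,1)^{\mathrm{t}}$ exactly on the (open, dense by minimality and the trace hypothesis) set where... — here is the subtlety, since $v$ could be vertical at some points. Instead I would proceed differently: conjugate on the \emph{left} so as to rotate $v(f(x))$ — after applying $B(x)$ — to the horizontal. Concretely, given $B \in \cW$, set $w(x) = B(x)\cdot(0,1)^{\mathrm{t}} \in \R^2 \setminus\{0\}$; choose a continuous (and $C^s$-preserving) assignment $B \mapsto C(B) \in C^0(X,\SL(2,\R))$, with $C(B)(x)$ close to $\Id$, such that $C(B)(f(x))\cdot w(x)$ is a \emph{negative} multiple of $(1,0)^{\mathrm t}$ and the first column of $C(B)(f(x))B(x)$ has lower entry $1$; this is possible because prescribing the image of one nonzero vector (up to the sign/normalization forced by $\det = 1$ together with a choice of the complementary direction) is an explicit smooth operation, and near $\Id$ one can do it canonically, e.g.\ by polar-type decomposition. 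Then $\tilde B(x) := C(B)(f(x)) B(x)$ has its second column equal to $(-1,0)^{\mathrm t}$... not quite — the condition $M\in S$ also pins the \emph{first} column to be $(t,1)^{\mathrm t}$. So after this first correction one still needs to conjugate so that the first column becomes of the form $(\ast,1)^{\mathrm t}$; this is achieved by a further conjugation, and crucially, because the second column is already $(-1,0)^{\mathrm t}$, conjugating by an upper-triangular unipotent (or its $\pm$) fixes the second column and can be used to set the lower-left entry of the first column to $1$ — again an explicit algebraic operation, continuous in $B$ and regularity-preserving, defined on a possibly smaller $\cW$.

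Composing these two conjugacies gives $\Psi(B) \in C^0(X,\SL(2,\R))$ with $\Psi(B)(f(x))B(x)\Psi(B)(x)^{-1}$ $S$-valued; call it $\Phi(B)$. Equations \eqref{bla1} and \eqref{bla2} hold by construction: each correction step is built from entry-wise algebraic and elementary-analytic operations (matrix multiplication, division by nowhere-vanishing quantities, square roots of positive quantities, etc.), which are continuous $C^0(X,\SL(2,\R)) \to C^0(X,\SL(2,\R))$ and restrict to continuous maps on $C^s$; and \eqref{bla2} is the definition of $\Phi$ as a conjugate of $B$ by $\Psi(B)$. For \eqref{bla3}, I would arrange each canonical choice so that when $B = A$ — which is already $S$-valued — the corrections are trivial: for an $S$-valued matrix $M$, $w = M\cdot(0,1)^{\mathrm t} = (-1,0)^{\mathrm t}$ is already the target, so the first correction is $\Id$, and the first column already has lower entry $1$, so the second correction is $\Id$; hence $\Psi(A) = \id$ and $\Phi(A) = A$.

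I expect the main obstacle to be exactly the issue flagged above: making the correction canonical, continuous, and \emph{regularity-preserving} in a neighborhood of $A$, while genuinely handling all points of $X$ — in particular ruling out that $B(x)\cdot(0,1)^{\mathrm t}$ becomes problematic somewhere. The hypotheses "at least three points" / minimal $C^r$ diffeomorphism and "trace not identically zero" must be used here; the trace condition guarantees $A$ is not, say, identically parabolic of a degenerate type, and I would use it (together with minimality) to ensure the open set on which the construction is unobstructed is in fact all of $X$ after shrinking $\cW$, rather than merely dense. This is where one should be careful to shrink the neighborhood $\cW$ of $A$ enough that every intermediate quantity (the vector to be rotated, the denominators in the elementary conjugations) stays uniformly away from its bad locus over all of $X$; compactness of $X$ makes this a matter of a uniform estimate. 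Once that uniformity is in hand, continuity and the $C^s$ statements follow formally from the explicit formulas.
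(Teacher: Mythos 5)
There is a genuine gap, and it is structural rather than a detail you could patch with more care about neighborhoods. In your first correction step you write $\tilde B(x) := C(B)(f(x))\,B(x)$ and reason about its columns, but this is a one-sided multiplication, not the cocycle conjugacy required by \eqref{bla2}; the correct expression is $C(B)(f(x))\,B(x)\,C(B)(x)^{-1}$, and the extra factor $C(B)(x)^{-1}$ on the right ruins the pointwise normalization. If you try to repair this, you hit the real obstruction: your $C(B)(y)$ is determined by $w(f^{-1}(y)) = B(f^{-1}(y))\cdot(0,1)^{\mathrm t}$ (so as to land the conjugated matrix at $f^{-1}(y)$ in $S$), but the same matrix $C(B)(y)$ also appears as the right-hand factor $C(B)(y)^{-1}$ in the conjugation at $y$, where it must instead preserve the normalization you are trying to set up there. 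The two requirements are incompatible for a generic perturbation $B$: a matrix close to $\Id$ that sends a vector close to $(0,1)^{\mathrm t}$ onto a multiple of $(1,0)^{\mathrm t}$ cannot simultaneously fix the $(0,1)^{\mathrm t}$ direction (and similarly for the second correction step). In short, the condition $\Psi(f(x)) B(x) \Psi(x)^{-1} \in S$ is a genuine cocycle-cohomological constraint coupling $\Psi$ at $x$ and at $f(x)$, and it cannot be solved by independent pointwise normalizations.

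The paper's proof handles exactly this coupling by first \emph{localizing} the discrepancy and then using an algebraic factorization that is invisible in your argument. Lemma~\ref{localize} uses a tower over a small open set $V$ (this is where minimality actually enters: every orbit returns to $V$, so one can push the perturbation forward floor by floor) to conjugate $B$ into a cocycle that agrees with $A$ outside a set $K = \overline V$ with $K$, $f(K)$, $f^2(K)$ pairwise disjoint. Lemma~\ref{localizeschrodinger} then exploits the fact that the map $(B_1,B_2,B_3)\mapsto B_3B_2B_1$ is a diffeomorphism from the set of triples in $S^3$ with $\tr B_2 \neq 0$ onto the set of $\SL(2,\R)$ matrices with nonzero lower-right entry; this ``three-step factorization'' is how the two-dimensional perturbation, now concentrated on one floor, gets redistributed across three consecutive floors while staying $S$-valued, and it is where the hypothesis $\tr A\not\equiv 0$ is really used (to choose $K$ where the factorization is available). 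Your guess that minimality and the trace hypothesis serve merely to rule out a degenerate locus and get uniform bounds is off the mark; they are used for these two structural steps. Your proposal contains neither the localization idea nor the factorization, so it does not reach the result.
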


\begin{proof}[Proof of Theorems \ref {t.P} and \ref {t.regular}]

The result is easy if $\#X \leq 2$, so we will assume that $\#X
\geq 3$. In this case, Lemma \ref {generalschrodinger} implies the
result unless $\tr A$ is identically $0$.

Assume that $\tr A$ is identically $0$. Let $V \subset X$ be an
open set such that $\overline V \cap f(\overline V)=\emptyset$ and
$\overline V \cap f^2(\overline V)=\emptyset$.  Let $\tilde A \in
C^r(X,S)$ be $C^r$ close to $A$ such that $\tr \tilde A$ is
supported in $V \cup f^2(V)$ and moreover $\tr \tilde A(z)+\tr
\tilde A(f^2(z))=0$ for $z \in V$.  Then $(f,\tilde A)$ is $C^r$
conjugate to $A$: Letting
$B(x)= \id$ for $x \notin f(V) \cup
f^2(V)$, $B(x)=\tilde A(f^{-1}(x)) R_{-\pi/2}$ for $x \in f(V)$, and
$B(x)=R_{-\pi/2} \tilde A(f^{-2}(x))$ for $x \in f^2(V)$, we have
$B(f(x))A(x)B(x)^{-1}=\tilde A(x)$.  If $A$ can be $C^s$
approximated by $\SL(2,\R)$-valued cocycles with property $P$,
then so can $\tilde A$.  Since $\tr \tilde A$ does not vanish
identically, $\tilde A$ can be $C^s$-approximated by $S$-valued
cocycles with property $P$.  Since $\tilde A$ can be chosen
arbitrarily $C^r$ close to $A$, we conclude that $A$ can be
$C^s$-approximated by $S$-valued cocycles with property $P$.
\end{proof}

The proof of Lemma \ref {generalschrodinger} has two distinct
steps.  First we show that $\SL(2,\R)$ perturbations can be
conjugated to localized $\SL(2,\R)$ perturbations and then we show
how to conjugate localized perturbations to Schr\"odinger
perturbations.

In order to be precise, the following definition will be useful.
Given $A \in C^r(X,\SL(2,\R))$ and $K \subset X$ compact, let
$C^r_{A,K}(X,\SL(2,\R)) \subset C^r(X,\SL(2,\R))$ be the set of
all $B$ such that $B(x)=A(x)$ for $x \notin K$.  The two steps we
described correspond to the following two lemmas.

\begin{lemma} \label {localize}

Let $V \subset X$ be any nonempty open set and let $A \in C^r(X,\SL(2,\R))$
be arbitrary.  Then there exist an open neighborhood $\cW_{A,V}
\subset C^0(X,\SL(2,\R))$ of $A$ and continuous maps
$$
\Phi=\Phi_{A,V}:\cW_{A,V} \to C^0_{A,\overline V}(X,\SL(2,\R)) \quad \text{and} \quad
\Psi=\Psi_{A,V}:\cW_{A,V} \to C^0(X,\SL(2,\R))
$$
satisfying
\eqref{bla1}, \eqref{bla2}, and \eqref{bla3}.

\end{lemma}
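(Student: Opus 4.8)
The plan is to use minimality of $f$ to spread a given perturbation out over a long orbit stretch and then push it, one site at a time, into the prescribed open set $V$. Fix a point $x_0 \in V$ and choose a small ball $U \ni x_0$ with $\overline U \subset V$. By minimality, for every $x \in X$ there is a least $m(x) \ge 0$ with $f^{m(x)}(x) \in U$; compactness gives a uniform bound $N$ on $m$, and for each $x$ the return can be arranged to happen through a "tower-like" family of pieces $U_0 = U, U_1, \ldots$ analogous to the castle of Figure~\ref{f.tower} (here much simpler, since the base is open and we do not need the continued-fraction gluing). First I would shrink the neighborhood $\cW_{A,V}$ of $A$ so that every $B \in \cW_{A,V}$ is close enough to $A$ that the "correction matrices" introduced below stay in a fixed compact set; then, given $B \in \cW_{A,V}$, write $C(x) = B(x) A(x)^{-1}$, so $C$ is uniformly close to $\Id$ and $C \equiv \Id$ is the value at $B = A$, which is what makes \eqref{bla3} automatic.

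The key construction is a telescoping conjugacy. For $x$ in the $n$-th floor of the tower over $U$, define $\Psi(B)(x)$ as an appropriate ordered product of the factors $C(f^{j}(\cdot))$ corresponding to the sites strictly below $x$ in the tower, so that the conjugation $\Psi(B)(f(x)) B(x) \Psi(B)(x)^{-1}$ equals $A(x)$ for every $x$ whose image $f(x)$ is \emph{not} in the bottom floor $U$, and collects the entire accumulated discrepancy into the single matrix assigned to the sites mapping into $U$. Concretely, outside $\bigsqcup f^j(\overline U)$ we take $\Psi(B) = \Id$ and $\Phi(B) = A$; on the tower we let $\Psi(B)$ be defined recursively by $\Psi(B)(f(x)) = C(x)\,\Psi(B)(x)$ going up the floors (cf.\ the construction in Lemma~\ref{l.almost invariant} and Step~1 of the proof of Lemma~\ref{l.real}), and we set $\Phi(B)(x) = A(x)$ except on the top floor, where $\Phi(B)(x)$ absorbs the product of all the $C$'s around the tower. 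This product is a continuous function of $B$, equals $\Id$ when $B = A$, and by construction is supported (as a deviation from $A$) inside $\overline V$; hence $\Phi(B) \in C^0_{A,\overline V}(X,\SL(2,\R))$, and \eqref{bla2} holds by the telescoping. Continuity of $\Phi$ and $\Psi$ in $B$ is clear from the formulas, and the regularity statement \eqref{bla1} follows because $\Psi(B)$ and $\Phi(B)$ are built from $B$, $A$, $f$ and matrix multiplication/inversion, all of which preserve each class $C^s$ with $0 \le s \le r$ (in the $r \ge 1$ case one uses that $f$ is a $C^r$ diffeomorphism and that the tower pieces can be taken with $C^r$ boundary, so that the gluing across floors is $C^r$).

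The main obstacle is \emph{continuity at the top floor}, i.e.\ making sure the "absorbing" matrix patches continuously onto the neighboring value $A$ along the part of the boundary of the top floor where the tower first closes up. This is exactly the role played by the compatibility condition \eqref{e.compatible} in Lemma~\ref{l.almost invariant}: one must choose the tower so that the two different ways of reaching a boundary point of the bottom floor (directly, versus by going once around) produce the same accumulated product, which forces that product to equal $\Id$ there; since $C$ is close to $\Id$ this can be arranged by a Tietze-type extension of the "correction" over $\overline U$ that interpolates between $\Id$ on the relevant part of $\partial U$ and the full discrepancy elsewhere, and in the $C^r$ case one uses a smooth bump instead. Once this gluing is handled, verifying \eqref{bla1}, \eqref{bla2}, \eqref{bla3} is routine, and shrinking $\cW_{A,V}$ if necessary keeps everything inside a fixed compact subset of $\SL(2,\R)$ so that all the estimates are uniform.
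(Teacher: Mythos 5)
There is a genuine gap, and it concerns the very conclusion you are trying to reach, namely that $\Phi(B) \in C^0_{A,\overline V}(X,\SL(2,\R))$. In your construction, the accumulated discrepancy is absorbed into "the single matrix assigned to the sites mapping into $U$," i.e.\ into $\Phi(B)$ restricted to $f^{-1}(U)$. But $f^{-1}(U)$ has no reason to be contained in $\overline V$: you chose $U$ to be a small ball inside $V$, and $f^{-1}(U)$ can sit anywhere in $X$. So your $\Phi(B)$ deviates from $A$ precisely on a set that typically lies \emph{outside} $\overline V$, which is the opposite of what the lemma asserts. The sentence "by construction is supported (as a deviation from $A$) inside $\overline V$" is therefore false as stated. (As a side note, the recursion $\Psi(B)(f(x)) = C(x)\Psi(B)(x)$ with $C=BA^{-1}$ does not give $\Psi(B)(f(x))\,B(x)\,\Psi(B)(x)^{-1}=A(x)$; the correct normalization is $\Psi(B)(f(x)) = A(x)\Psi(B)(x)B(x)^{-1}$.)

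One could try to rescue the idea by normalizing $\Psi(B)=\Id$ on $f(U)$ rather than on $U$, recursing up the castle, and absorbing into $\Phi(B)|_U$ after wrapping around; this would at least put the absorption inside $V$. But you would then hit the second problem you yourself flagged: continuity across the castle boundary, where the return time jumps. For a general minimal $f$ on a compact metric space there is no analogue of the clean two-point boundary of the interval $I_i$ in \S\ref{ss.almost invariant}, so the compatibility condition \eqref{e.compatible} does not reduce to finitely many constraints, and a "Tietze-type extension" of the correction is not a construction — you would still have to say exactly what continuous (and, for \eqref{bla1}, $C^r$) object you are extending and why the extension glues correctly on the full boundary set where the return time is discontinuous. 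The paper avoids both problems by working \emph{locally}: for each $x$ it takes the first backward hitting time $n_x$ of $V$, a small neighborhood $W_x$ of $y_x=f^{-n_x}(x)\subset V$ whose first $n_x$ iterates are disjoint, a $C^r$ bump $\phi_x$ supported in $W_x$, and interpolates $B$ toward $A$ on the floors $f(W_x),\dots,f^{n_x}(W_x)$ while compensating on the bottom floor $W_x\subset V$; this keeps the deviation inside $V$, makes $\Phi_x(B)=A$ on a neighborhood $U_x$ of $x$, and is continuous (indeed $C^r$) by construction. Finitely many such local corrections are then composed over a cover $\{U_{x_i}\}$. So the global castle over a single ball $U$ is not the right device here; the paper's mechanism is fundamentally different, and your argument does not prove the statement.
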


\begin{lemma} \label {localizeschrodinger}

Let $K \subset X$ be a compact set such that $K \cap
f(K)=\emptyset$ and $K \cap f^2(K)=\emptyset$.  Let $A \in
C^r(X,S)$ be such that for every $z \in K$ we have $\tr A(z) \neq
0$. Then there exists an open neighborhood $\cW_{A,K} \subset
C^0_{A,K}(X,\SL(2,\R))$ of $A$ and continuous maps
$$
\Phi=\Phi_{A,K}:\cW_{A,K} \to C^0(X,S) \quad \text{and} \quad
\Psi=\Psi_{A,K}:\cW_{A,K} \to C^0(X,\SL(2,\R))
$$ satisfying
\eqref{bla1}, \eqref{bla2}, and \eqref{bla3}.

\end{lemma}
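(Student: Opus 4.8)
The goal is to conjugate an arbitrary small $\SL(2,\R)$-perturbation $B \in \cW_{A,K}$ (supported, as a perturbation of $A$, inside a set $K$ disjoint from its first two iterates) to an $S$-valued cocycle, with the conjugacy supported near $K \cup f(K)$ and depending continuously on $B$. The plan is to use the three consecutive ``free'' fibers $f^{-1}(z)$, $z$, $f(z)$ (for $z\in K$) to absorb the perturbation: we will choose a conjugacy $\Psi(B)$ that is the identity outside $f(K)\cup f^2(K)$ and that, on those two sets, is tuned so that $\Phi(B)(x):=\Psi(B)(f(x))B(x)\Psi(B)(x)^{-1}$ lands in $S$. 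The key algebraic input is that a matrix $M\in\SL(2,\R)$ with lower-left entry nonzero can be written uniquely as $M = \begin{pmatrix} t & -1\\ 1 & 0\end{pmatrix} L$ with $L$ lower-triangular (indeed $\begin{pmatrix} a&b\\c&d\end{pmatrix} = \begin{pmatrix} a/c & -1\\ 1& 0\end{pmatrix}\begin{pmatrix} c & d\\ -1 & (ad-bc)/c - \text{(correction)}\end{pmatrix}$; more simply, left-multiplying $M$ by the rotation-like $S$-matrix determined by the ratio of the first column entries kills the entry needed), and that for $A\in C^r(X,S)$ with $\tr A \neq 0$ on $K$, the matrix $A(x)$ itself can be peeled off by right-multiplication by $R_{-\pi/2}$-type factors, exactly as in the computation $B(f(x))A(x)B(x)^{-1}=\tilde A(x)$ used in the proof of Theorems~\ref{t.P} and~\ref{t.regular}.

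Concretely, I would proceed in the following steps. First, for $x\in f(K)$, set $\Psi(B)(x) = B(f^{-1}(x))\,A(f^{-1}(x))^{-1}$; since $B=A$ outside $K$ and $f^{-1}(x)\in K$, this is a genuine perturbation-dependent factor, equal to $\id$ when $B=A$. This choice makes $\Phi(B)(f^{-1}(x))=\Psi(B)(x)B(f^{-1}(x))\Psi(B)(f^{-1}(x))^{-1}$; but on $K$ we have $f^{-1}(K)$ and $f^{-2}(K)$ also free, so $\Psi(B)(f^{-1}(x))=\id$ there by disjointness, and we get $\Phi(B) = B(f^{-1}(x))A(f^{-1}(x))^{-1}B(f^{-1}(x))$ — wait, this needs the ordering corrected, so instead the cleaner route is: on $f(K)$ let $\Psi(B)(x)$ solve $\Psi(B)(x)\,B(f^{-1}(x))\,\id^{-1} = A(f^{-1}(x))$, i.e.\ $\Psi(B)(x) = A(f^{-1}(x))B(f^{-1}(x))^{-1}$, so that $\Phi(B)(f^{-1}(x)) = A(f^{-1}(x)) \in S$ automatically; this handles fiber $f^{-1}(K)\subset K$. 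Then on $f^2(K)$ choose $\Psi(B)(x) = M(f^{-2}(x))$ where $M(z)\in\SL(2,\R)$ is the continuous (in $B$) matrix, built from the $S$-normal-form observation above applied to $\Psi(B)(f(z))\,B(z) = A(z)\,B(z)$ (note $z\in K$, $f(z)\in f(K)$), chosen so that $M(z)\big(A(z)B(z)\big)\id^{-1}$ — i.e.\ $\Phi(B)(f(z))$ — lies in $S$; this is possible and continuous provided the lower-left entry of $A(z)B(z)$ stays nonzero, which holds for $B$ in a small enough neighborhood of $A$ because $\tr A(z)\neq 0$ on $K$ forces the relevant entry of $A(z)$ to be nonzero (the $(2,1)$ entry of an $S$-matrix is $1$). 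Everywhere else, $\Psi(B)=\id$ and $\Phi(B)=B\in S$ (since $B=A\in S$ off $K$). One checks \eqref{bla2} fiber-by-fiber using the disjointness of $K, f(K), f^2(K)$, which guarantees no conflicting constraints are imposed on any single fiber; \eqref{bla3} is immediate from the constructions; and \eqref{bla1}, the $C^s$-regularity, follows because each formula for $\Psi(B)$ and $\Phi(B)$ is an algebraic (rational, on the relevant open set where denominators don't vanish) expression in the entries of $B$ and the fixed $C^r$ map $A$, composed with the $C^r$ maps $f^{\pm 1}, f^{\pm 2}$.

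The main obstacle will be the normalization step on $f^2(K)$: one must produce, continuously in $B$ and with $C^s$-dependence, a single $\SL(2,\R)$-valued factor $M(z)$ realizing the passage to $S$-normal form, and verify that the ``lower-left entry nonzero'' condition defining its domain is an open condition containing $A$. This is where the hypothesis $\tr A(z)\neq 0$ on the compact set $K$ is used in an essential way, and where I would be most careful: I would make the dependence explicit by writing $M(z)$ as an $S$-matrix (parametrized by the ratio of the entries of the first column of $A(z)B(z)$) times a lower-triangular correction absorbing the determinant, so that $C^s$-dependence on $B$ is manifest, and then shrink $\cW_{A,K}$ so that this ratio's denominator is bounded away from zero uniformly on $K$.
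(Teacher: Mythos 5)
There is a genuine gap in your proposal, and it is not just a bookkeeping issue: the system you set up is over-determined.

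You put the conjugacy $\Psi(B)$ on $f(K)\cup f^2(K)$, force $\Phi(B)(z)=A(z)$ for $z\in K$ via the choice $\Psi(B)(f(z))=A(z)B(z)^{-1}$, and then try to choose $\Psi(B)(f^2(z))=M(z)$ to make $\Phi(B)(f(z))\in S$. But you never check $\Phi(B)$ on $f^2(K)$, where $\Psi(B)\neq\id$ and hence $\Phi(B)$ also changes: for $z\in K$ one has $\Phi(B)(f^2(z))=\Psi(B)(f^3(z))\,B(f^2(z))\,\Psi(B)(f^2(z))^{-1}=A(f^2(z))\,M(z)^{-1}$, and this must also land in $S$. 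Once you include that constraint, the count goes wrong. Indeed, because $\Psi(B)=\id$ at $z$ and at $f^3(z)$, the conjugation pins the product
$$
\Phi(B)(f^2(z))\,\Phi(B)(f(z))\,\Phi(B)(z)=B(f^2(z))B(f(z))B(z)=A(f^2(z))A(f(z))B(z).
$$
Your constraint $\Phi(B)(z)=A(z)$ then forces $\Phi(B)(f^2(z))\Phi(B)(f(z))=A(f^2(z))A(f(z))B(z)A(z)^{-1}$. A product of two matrices in $S$ always has $(2,2)$-entry equal to $-1$ — it is a $2$-dimensional surface in $\SL(2,\R)$ — whereas right-multiplying by $B(z)A(z)^{-1}$ moves $A(f^2(z))A(f(z))$ off that surface for generic $B(z)$ near $A(z)$. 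So your three constraints $\Phi(B)\in S$ on $K$, $f(K)$, $f^2(K)$, with $\Phi(B)|_K=A|_K$ prescribed, have no solution.

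The fix, which is the actual content of the lemma, is to let all three values $\Phi(B)$ at the consecutive fibers differ from $A$. The paper's key observation is that the map $(B_1,B_2,B_3)\mapsto B_3B_2B_1$ is an analytic diffeomorphism from $Z=\{(B_1,B_2,B_3)\in S^3:\tr B_2\neq 0\}$ onto $L=\{M\in\SL(2,\R):M_{22}\neq 0\}$ — a three-parameter factorization, not your two-parameter $S\times(\text{lower triangular})$ normal form. With $\Psi(B)$ supported on $K\cup f(K)$, the pinned product is $B(f(z))B(z)B(f^{-1}(z))$; when $B=A$ its $(2,2)$-entry equals $-\tr A(z)\neq 0$, so the hypothesis $\tr A|_K\neq 0$ is precisely what puts the product in $L$ for $B$ close to $A$, and applying the inverse diffeomorphism $\eta=(\text{product})^{-1}$ defines $\Phi(B)$ on $f^{-1}(K)\cup K\cup f(K)$ with all the required continuity and $C^s$ regularity. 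Your intuition about which hypothesis is doing the work was correct; the algebraic lemma and the freedom allotted to $\Phi$ on $K$ both need to change.
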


Before proving the two lemmas, let us show how they imply Lemma
\ref {generalschrodinger}.

\begin{proof}[Proof of Lemma \ref {generalschrodinger}]

Let $z \in X$ be such that $\tr A(z) \neq 0$.  Let $V$ be an open
neighborhood of $z$ such that with $K=\overline V$ we have $\tr
A(x) \neq 0$ for $x \in K$, $K \cap f(K)=\emptyset$, and $K \cap
f^2(K)=\emptyset$. Let $\Phi_{A,V}:\cW_{A,V} \to
C^0_{A,K}(X,\SL(2,\R))$ and $\Psi_{A,V}:\cW_{A,V} \to
C^0(X,\SL(2,\R))$ be given by Lemma~\ref {localize}.  Let
$\Phi_{A,K}:\cW_{A,K} \to C^0(X,S)$ and $\Psi_{A,K}:\cW_{A,K} \to
C^0(X,\SL(2,\R))$ be given by Lemma~\ref {localizeschrodinger}.
Let $\cW$ be the domain of $\Phi=\Phi_{A,K} \circ \Phi_{A,V}$ and
let $\Psi=(\Psi_{A,K} \circ \Phi_{A,V}) \cdot \Psi_{A,V}$.  The
result follows.
\end{proof}

\begin{proof}[Proof of Lemma \ref {localize}]

For every $x \in X$, let $y=y_x \in V$ and $n = n_x \geq 0$ be such
that $f^n(y)=x$, but $f^{-i}(x) \notin V$ for $0 \leq i \leq n-1$.  Let
$W=W_x \subset V$ be an open neighborhood of $y$ such that $W \cap
f^i(W)=\emptyset$ for $1 \leq i \leq n$.  Let $K=K_x \subset W$
be a compact neighborhood of $y$.
Let $U=U_x \subset f^n(K_x)$ be an open neighborhood of $x$.
Let $\phi=\phi_x:W \to [0,1]$
be a $C^r$ map such that $\phi(z)=0$ for $z \in W \setminus K$,
while $\phi(z)=1$ for $z \in f^{-n}(U)$.

Define maps $\Phi_x,\Psi_x:\cW_x \to C^0(X,\SL(2,\R))$ on some
open neighborhood $\cW_x$ of $A$ as follows.
Let $\Pi:\GL^+(2,\R) \to \SL(2,\R)$ be given by $\Pi(M) = (\det M)^{-1/2} M$.
Let
$\Phi_x(B)(z)=B(z)$ for $z \notin \bigcup_{i=0}^n f^i(W)$,
$\Phi_x(B)(z)=\Pi \left( B(z)+\phi(f^{-j}(z)) (A(z)-B(z)) \right)$
for $z \in f^j(W)$ and
$1 \leq j \leq n$, and
$$
\Phi_x(B)(z) = [\Phi_x(B)(f(z))]^{-1} \cdots [\Phi_x(B)(f^n(z))]^{-1} \cdot
B(f^n(z)) \cdots B(z)
\text{ for $z \in W$.}
$$
Let $\Psi_x(B)(z)=\id$ for
$x \notin \bigcup_{i=1}^{n} f^i(W)$ and
$\Psi_x(B)(z)=\Phi_x(B)(f^{-1}(z)) \cdots \Phi_x(B)(f^{-j}(z))
\cdot B(f^{-j}(z))^{-1} \cdots B(f^{-1}(z))^{-1}$ for $z \in
f^j(W)$ and $1 \leq j \leq n$. Then $\Phi_x$ and $\Psi_x$ are
continuous and have the following properties:
\begin{enumerate}
\item For every $z \in X$, we have $\Psi_x(B)(f(z)) \cdot B(z)
\cdot [\Psi_x(B)(z)]^{-1} = \Phi_x(z)$; \item The set $\{z \in X
\setminus V; \; \Phi_x(B)(z) = A(z)\}$ contains $\{z \in X
\setminus V; \; B(z) = A(z)\} \cup (U_x\setminus V)$; \item
$\Phi_x(A)=A$ and $\Psi_x(A)=\id$.
\end{enumerate}

Choose a finite sequence $x_1,...,x_k$ such that $X =
\bigcup_{i=1}^k U_{x_i}$.  Let $\cW_{A,V} \subset
C^0(X,\SL(2,\R))$ be an open neighborhood of $A$ such that
$\Phi_i=\Phi_{x_i} \circ \cdots \circ \Phi_{x_1}$
is well defined for $1 \leq i \leq k$.  Let
$\Psi_i=\Psi_{x_i} \circ \Phi_{i-1}$ for $2 \leq i \leq k$ and
$\Psi_1=\Psi_{x_1}$. The result follows with $\Phi=\Phi_k$ and
$\Psi=\Psi_k \cdots \Psi_1$.
\end{proof}

\begin{proof}[Proof of Lemma \ref {localizeschrodinger}]

Let $Z \subset S^3$ be the set of all $(B_1,B_2,B_3)$ such that
$\tr B_2 \neq 0$.  One easily checks that $(B_1,B_2,B_3) \mapsto
B_3 B_2 B_1$ is an analytic diffeomorphism between $Z$ and
$$
L = \left\{\begin{pmatrix} a & b \\ c & d \end{pmatrix} \in
\SL(2,\R); \; d \neq 0 \right\}.
$$
Let $\eta:L \to Z$ be the inverse map.

Let $\Phi(B)(x)=A(x)$ if $z \notin \bigcup_{i=-1}^1 f^i(K)$ and
for $z \in K$, let
$$
\left(\Phi(B)(f^{-1}(z)), \Phi(B)(z),\Phi(B)(f(z)) \right)
= \eta \left( B(f^{-1}(z)),B(z),B(f(z)) \right).
$$  Let
$\Psi(B)(z)=\id$ for $z \notin K \cup f(K)$,
$\Psi(z)=\Phi(B)(f^{-1}(z)) \cdot [B(f^{-1}(z))]^{-1}$ for $z \in K$
and $\Psi(z)=\Phi (B)(f^{-1}(z)) \cdot \Phi(B)(f^{-2}(z)) \cdot
[B(f^{-2}(z))]^{-1} \cdot [B(f^{-1}(z))]^{-1}$ for $z \in f(K)$.  All
properties are easy to check.
\end{proof}

\begin{rem}

Let $f:X \to X$ be a homeomorphism of a compact metric space and
let $N \geq n \geq 1$. Let us say that a compact set $K$ is
\emph{$(n,N)$-good} if $K \cap f^k(K)=\emptyset$ for $1 \leq k
\leq n-1$ and $\bigcup_{k=0}^{N-1} f^k(K)=X$.  Then Lemma \ref
{generalschrodinger} holds under the weaker (than minimality of $f$)
hypothesis that there
exist $N \geq 3$ and a $(3,N)$-good compact set $K$ such that $\tr
A(x) \neq 0$ for every $x \in K$.

\end{rem}

\subsection{Dense Absolutely Continuous Spectrum}

To prove Theorem \ref {t.acspectrum}, we
will use the following standard result:

\begin{thm} \label {ac criterion}

Let $f$ be a Diophantine translation of the $d$-torus $\T^d = \R^d / \Z^d$.
Then there exists a set $\Theta \subset \R$ of full Lebesgue
measure such that if $V \in C^\infty(\T^d,\R)$ and $E_0 \in \R$ are
such that $(f,A_{E_0,V})$ is $C^\infty$ $\PSL(2,\R)$ conjugate to
$(f,R_{\pi \theta})$ for some $\theta \in \Theta$, then the
associated Schr\"odinger operator has some absolutely continuous
spectrum.

\end{thm}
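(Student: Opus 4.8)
The plan is to combine the standard KAM (reducibility) theorem of Eliasson with Kotani theory. Fix the Diophantine translation vector $\alpha\in\T^d$, so that $\|k\cdot\alpha\|\geq \kappa|k|^{-\tau}$ for all $k\in\Z^d\setminus\{0\}$, where $\|\cdot\|$ denotes distance to $\Z$. I would let $\Theta$ be the set of $\theta\in\R$ such that: $2\theta$ is irrational; $2\theta$ is not a gap label of any of the operators in play (a countable set, by the gap-labelling theorem); and $\theta$ is Diophantine relative to $\alpha$ in the sense $\|2\theta-k\cdot\alpha\|\geq \kappa'|k|^{-\tau'}$ for all $k\in\Z^d\setminus\{0\}$ and suitable $\kappa',\tau'>0$. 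Because $\alpha$ is fixed, each of these is a full-measure condition on $\theta$, so $\Theta$ has full Lebesgue measure; this is the set referred to in the statement.

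\textbf{Localizing near $E_0$.}
Now suppose $V\in C^\infty(\T^d,\R)$, $E_0\in\R$, and $B\in C^\infty(\T^d,\PSL(2,\R))$ satisfy $B(x+\alpha)A_{E_0,V}(x)B(x)^{-1}=R_{\pi\theta}$ with $\theta\in\Theta$. Since $2\theta$ is irrational, $R_{\pi\theta}$ is elliptic, so $(f,A_{E_0,V})$ is not uniformly hyperbolic, and hence $E_0\in\Sigma$ by Johnson's theorem \eqref{jlresult}. Conjugating the entire one-parameter family by the \emph{fixed} map $B$, set $\tilde A_E(x)=B(x+\alpha)A_{E,V}(x)B(x)^{-1}$. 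From $A_{E,V}(x)=A_{E_0,V}(x)+(E-E_0)\left(\begin{smallmatrix}1&0\\0&0\end{smallmatrix}\right)$ one sees that $\tilde A_E$ is affine in $E$, equals the constant $R_{\pi\theta}$ at $E=E_0$, and has $E$-derivative $B(x+\alpha)\left(\begin{smallmatrix}1&0\\0&0\end{smallmatrix}\right)B(x)^{-1}$, a fixed $C^\infty$ map. Hence there is an interval $J\ni E_0$ on which $(f,\tilde A_E)$ is uniformly $C^\infty$-close to the constant rotation $(f,R_{\pi\theta})$, with distance $O(|E-E_0|)$; so the family $(f,\tilde A_E)_{E\in J}$, which is conjugate to $(f,A_{E,V})_{E\in J}$, lies in the perturbative regime of Eliasson's theorem.

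\textbf{Applying KAM.}
Here one invokes Eliasson's reducibility theorem for $C^\infty$ Schr\"odinger one-parameter families: since $\alpha$ is Diophantine and the base point $E_0$ corresponds to the elliptic constant $R_{\pi\theta}$ with $\theta\in\Theta$, there is a subset $\cR\subset J$ of positive Lebesgue measure (of density tending to $1$ at $E_0$) such that for every $E\in\cR$ the cocycle $(f,A_{E,V})$ is $C^\infty$-conjugate to a constant elliptic matrix. I expect this to be the main obstacle: passing from reducibility at the single energy $E_0$ to a \emph{positive-measure} set of reducible energies. It rests on the monotonicity of the fibered rotation number $E\mapsto\rho(E)$ of the Schr\"odinger family — whose derivative is bounded below in the perturbative regime because of the sign of $\partial_E A_{E,V}$ — together with the fact that the ``resonant'' energies excised by the KAM scheme form a set of measure zero. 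This is exactly the content of the standard reducibility theorem, and I would quote it as such rather than reprove it. For $E\in\cR$, conjugacy to an elliptic constant forces the Lyapunov exponent $L(E)$ to vanish, and forces every nonzero solution of $H_x u=Eu$ to be bounded above and below uniformly in $n$, for every $x$; in particular no solution is subordinate.

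\textbf{Conclusion via Kotani theory.}
Finally I would apply Kotani theory for the ergodic family $\{H_x\}$: its a.c. spectrum satisfies $\Sigma_{ac}=\overline{\{E\in\R:L(E)=0\}}^{\,\mathrm{ess}}$ (essential closure), for a.e.\ $x$. Since $L$ vanishes on $\cR$ and $|\cR|>0$, this essential closure has positive Lebesgue measure, so $\Sigma_{ac}\neq\emptyset$. Equivalently, by the Gilbert--Pearson subordinacy theorem, the positive-measure set of $E\in\cR$ at which no subordinate solution exists is an essential support of the absolutely continuous part, giving the same conclusion. Hence the associated Schr\"odinger operators have some absolutely continuous spectrum, as claimed.
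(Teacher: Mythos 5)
Your overall strategy — localize near $E_0$ via the fixed conjugacy $B$, apply a KAM reducibility theorem to obtain a positive-measure set $\cR$ of reducible energies, and conclude via Kotani/subordinacy — is sound and ends up in the same place as the paper, whose endgame is Lemma~\ref{sigmab} (conjugacy to rotations on a positive-measure set of energies implies some a.c.\ spectrum); these final steps are essentially equivalent, since conjugacy to a rotation rules out subordinate solutions. Your choice of $\Theta$ is also morally the paper's $\Theta_{\alpha_f}$: the joint Diophantine condition $(\alpha_f,\theta)\in\DC_{d+1,\kappa,\tau}$ already forces $2\theta\notin\Z\alpha_1+\cdots+\Z\alpha_d+\Z$, so your separate exclusion of gap labels is redundant (and is anyway needed as a property of $\theta$ alone, since $\Theta$ must be chosen before $V$).

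The genuine gap is in the middle. You invoke as a black box an ``Eliasson reducibility theorem for $C^\infty$ Schr\"odinger one-parameter families'' that directly produces a positive-measure $\cR\subset J$. What the paper actually cites (Theorem~\ref{conjugation}, from Herman) is a \emph{single-cocycle} statement: if $A$ is $C^\infty$-close to $R_{\pi\theta}$ and the \emph{fibered rotation number} $\rho(f,A)$ lies in $\Theta_{\alpha_f,\kappa,\tau}/\Z$, then $(f,A)$ is reducible. Promoting this to a positive-measure set of reducible energies is precisely where the work lies, and the paper does it through properties of $\rho(E):=\rho(f,A_{E,V})$: continuity of $\rho$ plus the density-point property of $\Theta_{\alpha_f,\kappa,\tau}$ give that $\rho(I)\cap\Theta_{\alpha_f,\kappa,\tau}/\Z$ has positive Lebesgue measure, and then the fact that $\rho$ is pointwise $K$-Lipschitz at every \emph{reducible} energy is what transfers positive measure from the rotation-number axis back to the energy axis (a null set of energies at which $\rho$ is pointwise Lipschitz would have a null image, a contradiction). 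Your sketch — monotonicity of $\rho$ with derivative ``bounded below because of the sign of $\partial_E A_{E,V}$'', plus ``the resonant energies excised by KAM have measure zero'' — is not how this is made rigorous: the sign of $\partial_E A_{E,V}$ gives monotonicity but not a lower derivative bound, and the measure estimate one needs is about the preimage $\rho^{-1}(\Theta_{\alpha_f,\kappa,\tau})$, not about the KAM resonant set. If you want to keep the family-version route, you need to supply a reference for the $C^\infty$ discrete-time statement with the positive-measure conclusion; otherwise you should replace that step with the paper's argument via the fibered rotation number and its Lipschitz behavior at reducible cocycles.
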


For completeness, we will discuss the reduction of this result to
the standard KAM Theorem in more detail in Appendix~\ref{ap.KAM}.

\begin{proof}[Proof of Theorem \ref {t.acspectrum}]

Let $V \in C^0(\T^d,\R)$ be non-constant, and let $E$ be in the
spectrum of the associated Schr\"odinger operator.  By Lemma~\ref{l.b},
there exists $\tilde A \in C^0(\T^d, \SL(2,\R))$ arbitrarily close to $A_{E, V}$
such that $(f, \tilde A)$
is conjugate to $(f,R_{2 \pi \theta})$ for some $\theta \in
\Theta$. Approximating the conjugacy by a $C^\infty$
map, we may assume that $\tilde A$ is $C^\infty$.
Applying Lemma \ref {generalschrodinger} with $r=\infty$,
we find a $C^\infty$ function $\tilde V$ which is $C^0$-close to $V$,
such that $(f,A_{E,\tilde V})$ is conjugate to $(f,A)$, and hence to
$(f,R_{2 \pi \theta})$. The result now follows from Theorem \ref {ac
criterion}.
\end{proof}

\appendix

\section{Topological Groups}\label{ap.groups}

We quickly review some material that can be found in~\cite{Rudin}.
Let $G$ be a topological group. If $G$ is locally compact and
abelian, one defines the \emph{dual group} $\Gamma = \hat{G}$; it
consists of all characters of $G$ (i.e., continuous homomorphisms
$\gamma: G \to \T^1$). Then $\Gamma$ is an abelian group, and with
the suitable topology, it is also locally compact. Some important
facts are: (1)~$G$ is compact iff $\Gamma$ is discrete;
(2)~Pontryagin Duality\footnote{``$=$'' means isomorphic and
homeomorphic.}: $\hat{\Gamma} = G$.

\begin{proof}[Proof of Lemma~\ref{l.group dichotomy}]
Since $\G$ is compact and infinite, the dual group $\Gamma$ is discrete and infinite.

First, assume that $\Gamma$ contains an element of infinite order.
So we can assume $\Z$ is a closed subgroup of $\Gamma$. Let
$\iota: \Z \to \Gamma$ be the inclusion homomorphism, and let $s :
\hat{\Gamma} \to \hat{\Z}$ be its dual\footnote{The dual of a
continuous homomorphism $h:G_1 \to G_2$ is the continuous
homomorphism ${\hat{h}: \hat{G_2} \to \hat{G_1}}$ defined by
$\langle x_1, \hat{h}(\gamma_2)\rangle =\langle h(x_1),
\gamma_2\rangle$, where $x_1 \in G_1$, $\gamma_2 \in \hat{G_2}$.}.
Then $s$ is onto: every character on $\Z$ can be extended to a
character on $\Gamma$; see \cite[\S2.1.4]{Rudin}. Since
$\hat{\Gamma} = \G$ and $\hat{\Z} = \T^1$, alternative~(a) holds.

Now, assume that all elements of $\Gamma$ are of finite order.
Then $\G$ is a Cantor set, see \cite[\S2.5.6]{Rudin}.
There is a translation $x \mapsto x+ \alpha$ of $\G$
which is a factor of the minimal homeomorphism $f$, and so it is itself minimal.
Therefore $\Gamma$ is a subgroup of $\T^1_d$,
the circle group with the discrete topology; see \cite[\S2.3.3]{Rudin}.
So there exist cyclic subgroups
$\Lambda_i \subset \Gamma$ with
$|\Lambda_i| \to \infty$.
Let $H_i\subset \G$ be the annihilator (see \cite[\S2.1]{Rudin})
of $\Lambda_i$.
Then $\G/H_i = \Lambda_i$,
and the quotient homomorphism $\G \to \G/H_i$ is continuous.
So we are in case~(b).
\end{proof}

\section{Absolutely Continuous Spectrum via KAM}\label{ap.KAM}

In this section, $f:\T^d \to \T^d$ is a minimal translation of the torus
$\T^d=\R^d/\Z^d$.  We will show how Theorem \ref {ac criterion}
reduces to a result of \cite {H}, based on the usual KAM theorem.
We will use the following criterion for absolutely continuous
spectrum (see, e.g., \cite{Simon} for a simple
proof):

\begin{lemma} \label {sigmab}

Let $f:\T^d \to \T^d$ be a homeomorphism, and let $V \in
C^0(\T^d,\R)$. If $\Sigma_b=\{E \in \R,\, (f,A_{E,V}) \text { is
conjugate to a cocycle of rotations}\}$ has positive Lebesgue
measure, then the associated Schr\"odinger operators have some
absolutely continuous spectrum.

\end{lemma}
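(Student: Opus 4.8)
The plan is to deduce from membership in $\Sigma_b$ that, for the relevant energies, every solution of the difference equation \eqref{eve} is bounded, and then to invoke the standard criterion --- this is the content of the result cited from \cite{Simon} --- that boundedness of all solutions on a set of positive Lebesgue measure forces that set into an essential support of the absolutely continuous part of the spectrum. To make the first point precise: for $E\in\Sigma_b$ there is a continuous conjugacy $B_E\in C^0(\T^d,\SL(2,\R))$ taking $(f,A_{E,V})$ to a cocycle $(f,\tilde A_E)$ with $\tilde A_E$ valued in $\SO(2,\R)$; since a product of rotations is a rotation, $\|\tilde A_E^n(x)\|=1$, and hence $\|A^n_{E,V}(x)\|\le \|B_E\|_{C^0}\,\|B_E^{-1}\|_{C^0}=:C_E$ for all $x\in\T^d$ and all $n\in\Z$. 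A matrix in $\SL(2,\R)$ has the same operator norm as its inverse, so $\|A^n_{E,V}(x)^{-1}\|\le C_E$ too; by \eqref{evemat}, every solution $u$ of \eqref{eve} with $E\in\Sigma_b$ then satisfies $C_E^{-2}(|u_0|^2+|u_{-1}|^2)\le |u_n|^2+|u_{n-1}|^2\le C_E^2(|u_0|^2+|u_{-1}|^2)$ for all $n$, so it is bounded above and bounded away from $0$.

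Next I would fix $x\in\T^d$ and pass to the half-line operator $H_x^+$ obtained by restricting $H_x$ to $\ell^2(\Z_{\geq 0})$ with a Dirichlet boundary condition. Up to an index shift its transfer matrices are a subfamily of $\{A^n_{E,V}(x)\}_{n\in\Z}$, hence uniformly bounded for each $E\in\Sigma_b$, so all solutions of $H_x^+u=Eu$ are bounded whenever $E\in\Sigma_b$; in particular none of them is subordinate. By the criterion from \cite{Simon} (a consequence of Gilbert--Pearson subordinacy theory), the set of energies at which $H_x^+$ has no subordinate solution is, up to a Lebesgue-null set, contained in an essential support of the absolutely continuous part of the spectral measure of $H_x^+$. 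Since this essential support therefore has Lebesgue measure at least $|\Sigma_b|>0$, the operator $H_x^+$ has absolutely continuous spectrum (in fact of positive Lebesgue measure), and its a.c.\ spectral measure gives positive mass to $\Sigma_b$.

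Finally I would transfer this to the whole-line operator: decoupling the bond between sites $-1$ and $0$ writes $H_x$ as $H_x^-\oplus H_x^+$ plus a rank-two, hence trace-class, perturbation, so by the Kato--Rosenblum theorem the absolutely continuous parts of $H_x$ and of $H_x^-\oplus H_x^+$ are unitarily equivalent; consequently the a.c.\ part of $H_x$ gives positive mass to $\Sigma_b$, and $H_x$ has nonempty absolutely continuous spectrum. Since $x\in\T^d$ was arbitrary, this proves the lemma. The only substantive ingredient is the subordinacy criterion quoted from \cite{Simon}; the boundedness of the transfer matrices via the $C^0$-conjugacy, the $\SL(2,\R)$ norm identity, and the half-line--to--whole-line reduction through a trace-class perturbation are all routine. (If $f$ happens to preserve a measure equivalent to Lebesgue --- e.g.\ a translation, as in the application to Theorem~\ref{t.acspectrum} --- one could alternatively observe that the Lyapunov exponent vanishes on $\Sigma_b$ and invoke Kotani theory, but the subordinacy route requires no ergodicity.)
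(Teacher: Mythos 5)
The paper offers no proof of Lemma~\ref{sigmab}; it simply states the lemma and cites \cite{Simon} for ``a simple proof.'' Your argument is the standard way to fill in that reference and is correct: conjugacy to a rotation cocycle on $\Sigma_b$ gives $\sup_{n,x}\|A^n_{E,V}(x)\|\le C_E<\infty$ (using $\|M\|=\|M^{-1}\|$ for $M\in\SL(2,\R)$ and submultiplicativity), hence by \eqref{evemat} every solution of \eqref{eve} is bounded above and below for $E\in\Sigma_b$; Simon's bounded-solutions criterion then places $\Sigma_b$, up to a Lebesgue-null set, inside an essential support of the absolutely continuous part of the half-line spectral measure, giving a.c.\ spectrum of positive Lebesgue measure there; and the passage to the whole-line $H_x$ via a finite-rank decoupling and the Kato--Rosenblum theorem is routine. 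One small correction of attribution: the criterion in \cite{Simon} is \emph{not} obtained as a consequence of Gilbert--Pearson subordinacy theory --- the purpose of that paper is precisely to give a short, self-contained proof of the bounded-solutions criterion that bypasses subordinacy --- although, since boundedness of all solutions in particular excludes a subordinate solution, the Gilbert--Pearson route also delivers the conclusion, so the step itself is sound.
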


Recall that if $A:\T^d \to \SL(2,\R)$ is homotopic to a constant, one
can define a fibered rotation number $\rho(f,A) \in \R/\Z$ (see
\cite {H}). The following properties of the fibered rotation
number are easy to check:
\begin{enumerate}

\item If $A$ is a constant rotation of angle $\pi \theta$, then
$\rho(f,A)=\theta$.

\item If $B$ is a $\PSL(2,\R)$ conjugacy between $(f,A)$ and
$(f,A')$, then $\rho(f,A)=\rho(f,A')+k \alpha$, where $k=k(B) \in
\Z$ only depends on the homotopy class of $B$.

\item The fibered rotation number is a continuous function of $A
\in C^0(\T^d,\SL(2,\R))$.

\item If $A \in C^0(\T^d,\SL(2,\R))$ is $\PSL(2,\R)$ conjugate to a
constant rotation, then the fibered rotation number (as a function
of $C^0(\T^d,\SL(2,\R))$ is $K$-Lipschitz at $A$ for some
$K>0$.\footnote { Let $\phi:Y \to Z$ be a function between metric
spaces and let $K>0$.  We say that $\phi$ is $K$-Lipschitz at $y
\in Y$ if there exists a neighborhood $\cV \subset Y$ of $y$ such
that for every $z \in \cV$, $d_Z(\phi(z),\phi(y)) \leq K
d_Y(z,y)$.}

\end{enumerate}

In \cite {H}, it is shown how the KAM Theorem implies reducibility
for cocycles close to constant, under a Diophantine assumption on
$f$ and on the fibered rotation number.  To state it precisely, it
will be convenient to introduce the following notation.

Let $n \geq 1$ and $\kappa,\tau>0$.  Let $\DC_{n,\kappa,\tau}$ be
the set of all $\alpha \in \R^n$ such that there exists
$\kappa,\tau>0$ such that for every $k_0 \in \Z$, $k \in \Z^n
\setminus \{0\}$, we have
$\left|k_0+\sum_{i=1}^n k_i \alpha_i\right|>\kappa
\left(\sum_{i=1}^n |k_i|\right)^{-\tau}$.  Notice that $\DC_{n,\kappa,\tau}$
is $\Z^d$ invariant.  Let $\DC_n=\bigcup_{\kappa,\tau>0}
\DC_{n,\kappa,\tau}$.  We say that $f$ is a Diophantine
translation if $f(x)=x+\alpha_f$ for some $\alpha_f \in \DC_d$.

For every $\alpha \in \R^d$, let $\Theta_{\alpha,\kappa,\tau}$ be
the set of all $\theta \in \R$ such that $(\alpha,\theta) \in
\DC_{d+1,\kappa,\tau}$. Notice that $\Theta_{\alpha,\kappa,\tau}$
is $\Z$ invariant. Let $\Theta_\alpha=\bigcup_{\kappa,\tau>0}
\Theta_{\alpha,\kappa,\tau}$. Then $\Theta_\alpha=\emptyset$ if
$\alpha \notin \DC_d$ and $\Theta_\alpha$ has full Lebesgue
measure if $\alpha \in \DC_d$.  Moreover, every $\theta \in
\Theta_\alpha$ is a Lebesgue density point of
$\Theta_{\alpha,\kappa,\tau}$ for some $\kappa,\tau>0$.

\begin{thm}[\cite {H}, Corollaire 3, page 493 and Remarque 1, page 495]
\label {conjugation}

For every $\theta \in \R$ and $\kappa,\tau>0$, there exists a
neighborhood $\cW \subset C^\infty(\T^d,\SL(2,\R))$ of $R_\theta$
such that if $A \in \cW$, and $\rho(f,A) \in
\Theta_{\alpha_f,\kappa,\tau}/\Z$, then $(f,A)$ is $C^\infty$
conjugate to a constant rotation.

\end{thm}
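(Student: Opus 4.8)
Since Theorem~\ref{conjugation} is quoted from \cite{H}, the plan is to indicate how it is obtained from the standard quasi-periodic KAM scheme. One runs a quadratically convergent (Newton-type) iteration that conjugates a cocycle near a constant rotation ever closer to a constant, and takes the limit of the conjugacies; I would follow the argument of Eliasson adapted to the $C^\infty$ category, as in \cite{H}, keeping track of the fibered rotation number along the iteration.

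\emph{Setup and inductive step.} Shrinking $\cW$, write $A(x)=R_\theta\exp(a(x))$ with $a\in C^\infty(\T^d,\sl(2,\R))$ of arbitrarily small size $\eps_0=\|a\|$. Suppose that after $n$ conjugations we have reached $A_n(x)=\Lambda_n\exp(a_n(x))$, where $\Lambda_n\in\SL(2,\R)$ is elliptic with rotation number $\rho_n$ close to $\rho:=\rho(f,A)$ modulo the lattice $\Z\alpha_f+\Z$, and $\|a_n\|$ is of size $\eps_n$ in a sufficiently high $C^k$ norm. We look for $B_n=\exp(b_n)$, with $b_n$ a trigonometric polynomial of degree $\le N_n$, so that $B_n(x+\alpha_f)A_n(x)B_n(x)^{-1}=\Lambda_{n+1}\exp(a_{n+1}(x))$ with $\|a_{n+1}\|$ of size $\eps_n^{1+\sigma}$ (after the customary Sobolev loss, offset by mollification). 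To first order, $b_n$ must solve the homological equation
\begin{equation*}
b_n(x+\alpha_f)-\mathrm{Ad}(\Lambda_n)\,b_n(x)=-P_{N_n}a_n(x)+(\text{term constant in }x),
\end{equation*}
where $P_{N_n}$ truncates to Fourier modes $|k|\le N_n$. Passing to Fourier coefficients diagonalizes this: on the complexified Lie algebra $\mathrm{Ad}(\Lambda_n)$ has eigenvalues $1$ and $e^{\pm 2i\rho_n}$, so the small divisors are $e^{2\pi i\langle k,\alpha_f\rangle}-1$ and $e^{2\pi i\langle k,\alpha_f\rangle}-e^{\pm 2i\rho_n}$ with $0<|k|\le N_n$. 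The first family is bounded below using $\alpha_f\in\DC_d$; the second using the hypothesis $\rho\in\Theta_{\alpha_f,\kappa,\tau}$, that is $(\alpha_f,\rho)\in\DC_{d+1,\kappa,\tau}$, provided $\rho_n\approx\rho$ and $k$ is away from a resonance. The non-invertible (``resonant'') Fourier modes --- the rare $k$ for which $2\rho_n$ is anomalously close to $\langle k,\alpha_f\rangle$ modulo $\pi$ --- are not eliminated; instead one performs Eliasson's resonant step: conjugate by $x\mapsto R_{\pi\langle k,h(x)\rangle}$, which is continuous because $h\circ f=h+\alpha$ and which, by property~(b) of the rotation number, shifts $\rho_n$ by a lattice element into the non-resonant range, at the cost of a controlled increase of $\|B_n\|$; then solve the homological equation as above. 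Iterating with $\eps_{n+1}=\eps_n^{1+\sigma}$ and $N_n$ growing appropriately, and choosing $\cW$ small enough that $\eps_0$ starts the scheme, the composed conjugacies converge in $C^\infty$ to some $B$, while $\Lambda_n$ converges to a constant matrix $\Lambda_\infty$; since $\rho_n\to\rho$ and $\rho$ is irrational (being Diophantine), $\Lambda_\infty$ is elliptic, hence conjugate to a constant rotation, and $(f,A)$ is $C^\infty$ conjugate to that rotation.

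\emph{Main obstacle.} The delicate point is to keep the divisor estimates for $e^{2\pi i\langle k,\alpha_f\rangle}-e^{\pm 2i\rho_n}$ uniform over the whole iteration. The rotation number $\rho_n$ drifts by lattice elements $\langle k,\alpha_f\rangle$ at each resonant step, and the relevant $k$ grow with the scale $N_n$, so the Diophantine constant threatens to degrade; one must use $(\alpha_f,\rho)\in\DC_{d+1,\kappa,\tau}$ to show that a resonance at scale $N_n$ occurs only for a sparse set of indices $n$ and that the cumulative growth of the norms $\|B_n\|$ stays summable, so that the infinite product of conjugacies still converges in $C^\infty$. This is precisely the subtle part of Eliasson's argument, and the content imported from \cite{H}. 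Everything else --- tame $C^k$ estimates for $\exp$ and for products, the interpolation and mollification bookkeeping that closes the $C^\infty$ loss, and the check via property~(c) that the limiting constant is elliptic rather than hyperbolic or parabolic --- is routine.
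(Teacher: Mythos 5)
The paper does not prove Theorem~\ref{conjugation}; it is stated with a citation to Herman~\cite{H} and used as a black box. So the question is whether your reconstruction correctly describes what is in \cite{H}.

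Your outline of the Newton iteration, the homological equation, and the identification of the small divisors is correct. However, you misattribute the technical core of the argument: you invoke Eliasson's resonance-removal step (conjugating by $x\mapsto R_{\pi\langle k,x\rangle}$ to shift the rotation number out of a resonant range), and you declare this to be ``precisely the subtle part of Eliasson's argument, and the content imported from \cite{H}.'' That is not right. Herman's Corollaire~3 (1983) predates Eliasson's 1992 work and is a genuinely simpler statement: the hypothesis here is a Diophantine condition on the \emph{pair} $(\alpha_f,\rho)\in\DC_{d+1,\kappa,\tau}$, i.e.\ $|k_0+\langle k,\alpha_f\rangle+m\rho|>\kappa(|k|+|m|)^{-\tau}$ for all integer vectors, and taking $m=0$ and $m=\pm 2$ bounds \emph{every} small divisor $e^{2\pi i\langle k,\alpha_f\rangle}-1$ and $e^{2\pi i\langle k,\alpha_f\rangle}-e^{\pm 2i\rho}$ away from zero at every truncation scale. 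There are no resonant modes to excise, so no resonant conjugation and no drift of $\rho_n$ by lattice elements; the Fourier-side homological equation is solved in full at each scale. Eliasson's contribution was exactly to drop the joint Diophantine hypothesis (keeping only $\alpha_f\in\DC_d$ and a weaker, full-measure condition on $\rho$), and it is in \emph{that} setting that resonances appear and the resonant step becomes necessary.

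The subtlety you should have flagged instead is a different one: the divisor of the off-diagonal block is $e^{2\pi i\langle k,\alpha_f\rangle}-e^{\pm 2i\rho_n}$ with $\rho_n$ the rotation angle of the current constant $\Lambda_n$, not $\rho$ itself. One closes this by observing $|\rho_n-\rho|\lesssim\varepsilon_n$ (the conjugacies $B_n=\exp b_n$ are homotopic to the identity, so the cocycle rotation number is preserved, and the constant's rotation number tracks it to within $O(\varepsilon_n)$); since $\varepsilon_n$ decays super-exponentially while the Diophantine lower bound at scale $N_n$ degrades only polynomially, the estimates survive with an essentially unchanged constant. With that correction, and with the resonant step deleted, your sketch is a fair account of the argument behind \cite{H}.
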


\begin{proof}[Proof of Theorem \ref {ac criterion}]

Let $\Theta=\Theta_{\alpha_f}$.  Since $\alpha_f \in \DC_d$,
$\Theta$ has full Lebesgue measure. Let $V$, $E_0$ and $\theta$ be
as in the statement of the theorem. Let $\kappa,\tau>0$ be such
that $\theta$ is a Lebesgue density point of
$\Theta_{\alpha_f,\kappa,\tau}$.

Let $\Sigma_r$ be the set of all $E \in \R$ such that
$(f,A_{E,V})$ is $C^\infty$ conjugate to a constant rotation.

Let $k \in \Z$ be such that $\rho(f,A_{E_0,V})=\theta+k\alpha$. By
Theorem \ref {conjugation}, there exists an open interval $I$
containing $E_0$ such that if $E' \in I$ and $\rho(f,A_{E',V})-k
\alpha \in \Theta_{\alpha_f,\kappa,\tau}$ then $E' \in \Sigma_r$.
Let $\rho:I \to \R/\Z$ be given by $\rho(E)=\rho(f,A_{E,V})-k
\alpha$.  If $\rho(I)=\{\theta\}$ (this cannot really happen, but
we do not need this fact), then $I \subset \Sigma_r$. Otherwise,
by continuity of the fibered rotation number, $\rho(I \cap
\Sigma_r) \supset \rho(I) \cap \Theta_{\alpha,\kappa,\tau}/\Z$ has
positive Lebesgue measure.  Since $\rho$ is $K(E)$-Lipschitz at
every $E \in \Sigma_r$, we conclude in any case that $\Sigma_r$
has positive Lebesgue measure.  The result follows by Lemma \ref
{sigmab}.
\end{proof}

\begin{ack}
We thank the referee for several suggestions that improved the exposition.
\end{ack}


\end{document}